\definecolor{orcidlogocol}{HTML}{A6CE39}
\newcommand{\orcidicon}{
  \includegraphics[height=1.8ex]{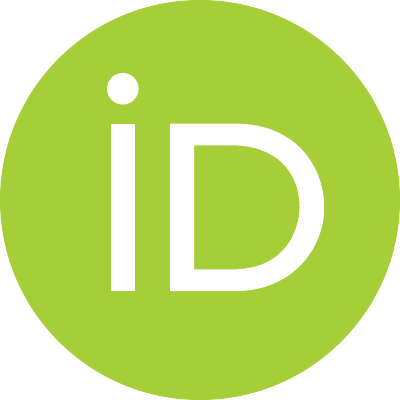}
}
\newlength\FHoffset
\newlength\FHleft
\newlength\FHright
\renewcommand{\headrulewidth}{1.0pt} 
\newbox\FHline
\title{\textbf{
Sharpness of the Osgood Criterion for the Continuity Equation with Divergence-free Vector Fields}}
\author[]{\large 
\textbf{Roberto Colombo}\footnote{EPFL, Station 8,
CH-1015 Lausanne,
Switzerland. \textit{Email:}  \href{mailto:roberto.colombo@epfl.ch}{roberto.colombo@epfl.ch}. 

} \; 
\textbf{\&} \; 
\textbf{Anuj Kumar}\footnote{
Department of Mathematics, University of California Davis, CA 95616, USA. \textit{Email:}  \href{mailto:anku@ucdavis.edu}{anku@ucdavis.edu}. 

\href{https://orcid.org/0000-0002-9203-9177}{\orcidicon orcid.org/0000-0002-9203-9177} 
}}
\date{}
\DeclareMathOperator{\Lip}{Lip}
\DeclareMathOperator{\loc}{loc}
\DeclareMathOperator{\diver}{div}
\DeclareMathOperator{\supp}{supp}
\DeclareMathOperator{\inter}{int}
\newcommand{\eps}{\varepsilon}
\newcommand{\R}{\mathbb{R}}
\newcommand{\N}{\mathbb{N}}
\theoremstyle{plain}
\newtheorem{thm}{Theorem}[section]
\newtheorem*{thm*}{Theorem}
\newtheorem{lem}[thm]{Lemma}
\newtheorem{prop}[thm]{Proposition}
\theoremstyle{remark}
\newcommand{\res}{\mathop{\hbox{\vrule height 7pt width .5pt depth 0pt
			\vrule height .5pt width 6pt depth 0pt}}\nolimits}
\numberwithin{equation}{section}
\date{}
\begin{document}

\maketitle

\begin{abstract}
For any modulus of continuity $\omega$ that fails the Osgood condition, we construct a divergence-free velocity field $v \in C_t C^\omega_x$ for which the associated ODE admits at least two distinct flow maps. In other words, non-uniqueness does not occur merely for a single or even finitely many trajectories, but instead on a set of initial conditions $E$ of positive Lebesgue measure. In fact, the set $E$ has full measure inside a cube where the construction is supported. Moreover, we also construct a divergence-free velocity field $v \in C_{t}C^\omega_x$ for which the associated continuity equation admits two distinct solutions $\mu^1$ and $\mu^2$ which are absolutely continuous with respect to Lebesgue measure for almost every time, and start from the same initial datum $\bar \mu \ll \mathscr{L}^{d}$. 
Our construction introduces two novel ideas: 
\begin{enumerate}[label = (\roman*)]
\item We introduce the notion of \textit{parallelization}, where at each time, the velocity field consists of simultaneous motion across multiple nested spatial scales. This differs from most explicit constructions in the literature on mixing or anomalous dissipation, where the velocity on different scales acts at separate times. This is crucial to cover the whole class of non-Osgood moduli of continuity. 
\item Inspired by the work of Bru\`e, Colombo and Kumar \cite{BCK2024}, we develop a new fixed-point framework that naturally incorporates the parallelization mechanism. This framework allows us to construct anomalous solutions of the continuity equation that belong to $L^1(\mathbb{R}^d)$ a.e. in time. 
\end{enumerate}
 
\end{abstract}

{\footnotesize \textbf{Keywords:} Osgood criterion, Continuity equation, Transport equation, Divergence-free vector fields, Nonuniqueness}

\tableofcontents

\section{Introduction}\label{sec:intro}
A classical problem in analysis is to determine suitable conditions on a Borel vector field $v:[0,T]\times \R^{d}\to \R^{d}$ for which the system of ODEs 
\begin{equation}\tag{ODE}\label{eq:ODE}
    \begin{cases}
        \dot \gamma(t)=v(t,\gamma(t))\qquad &\text{in $(0,T)$},\\
        \gamma(0)=\bar x
    \end{cases}
\end{equation}
is well-posed. Here we say that an absolutely continuous curve $\gamma:[0,T]\to \R^{d}$ solves \eqref{eq:ODE} if
\begin{equation*}
    \gamma(t)=\bar x+\int_{0}^{t}v(s,\gamma(s))ds\qquad \forall t\in [0,T].
\end{equation*}

Existence of solutions to \eqref{eq:ODE} is known under mild assumptions on $v$. An extension of Peano's classical result (\cite{Peano1886, peano1890demonstration}) due to Caratheodory (\cite{caratheodory1918vorlesungen}) ensures that \eqref{eq:ODE} admits at least one solution for every $\bar x\in \R^{d}$ as soon as $v(t, \cdot)$ is continuous for every $t\in [0,T]$ and obeys the following growth condition:
\begin{equation}\label{eq:assumption-growth}
    \int_{0}^{T}\lVert v(t,\cdot)\rVert_{L^{\infty}(\R^{d})}dt<\infty.
\end{equation}

Uniqueness of solutions to \eqref{eq:ODE} generally requires stronger regularity assumptions on the vector field $v$ and the mere spatial continuity is not sufficient. A classical counterexample to uniqueness due to Peano illustrates this fact. Consider a continuous autonomous vector field $v:\R\to \R$ given by $v(x)=\sqrt{|x|}$. Then the function $\gamma_{\alpha}(t):= (t-\alpha)_{+}^{2}/4$ is a solution to \eqref{eq:ODE} with initial condition $\bar{x} = 0$ for every $\alpha\in [0,T]$.

To state the precise requirement to ensure uniqueness, we first introduce the notion of modulus of continuity.  Given a function $\omega:[0,\infty)\to[0,\infty)$ that is continuous, strictly increasing, concave, and such that $\omega(0)=0$, we say that a velocity field $v:[0,T]\times\mathbb{R}^d\to\mathbb{R}^d$ has spatial modulus of continuity $\omega$ if
\begin{align}
[v(t,\cdot)]_{\omega}:= \sup_{\substack{x,y\in \R^{d}\\ x\neq y}}\frac{|v(t,x)-v(t,y)|}{\omega(|x-y|)}
\end{align}
is finite for every $t\in[0,T]$. When $\omega(z)=z$, this corresponds to the Lipschitz case. In this setting, the classical Cauchy--Lipschitz theory (also known as Picard--Lindel\"of Theorem) ensures uniqueness of trajectories for every initial condition, provided that the Lipschitz constant is integrable in time (\cite{picard1893application, lindelof1894application}). The result of Cauchy--Lipschitz can be further generalized to vector fields that are less regular than Lipschitz. In particular, uniqueness of trajectories holds provided that
\begin{equation}\label{eq:Osgood-integral-time}
    \int_{0}^{T}[v(t,\cdot)]_{\omega}dt<\infty,
\end{equation}
where the modulus of continuity $\omega:[0,\infty)\to [0,\infty)$ satisfies the \textit{Osgood condition} \cite{osgood1898beweis} (see also, \cite{Tonelli1925, Montel1926})
\begin{equation}\label{eq:Osgood-condition}
    \int_{0}^{1}\frac{1}{\omega(t)}dt=\infty.
\end{equation}
To see this, one just need to compare two solutions $\gamma_{1},\gamma_{2}$ of \eqref{eq:ODE}
\begin{equation*}
    |\gamma_{1}(t)-\gamma_{2}(t)|\le \int_{0}^{t}[v(s,\cdot)]_{\omega}\omega(|\gamma_{1}(s)-\gamma_{2}(s)|)ds\qquad \forall t\in (0,T)
\end{equation*}
and then use a Gronwall-type argument together with conditions \eqref{eq:Osgood-integral-time} and \eqref{eq:Osgood-condition} to deduce that $|\gamma_{1}-\gamma_{2}|\equiv 0$. 

We say that a modulus of continuity $\omega:[0,\infty)\to[0,\infty)$ which fails to satisfy \eqref{eq:Osgood-condition} is \textit{non-Osgood}.
Letting $\log_{+} z = |\log z|$ denote the absolute value of the logarithm, the functions, $\omega(z) = z \cdot \log_+ z$, $\omega(z) = z \cdot \log_+ z \cdot \log_+ \log_+ z$, are all examples of Osgood moduli. On the other hand, for any $\varepsilon > 0$, the functions $\omega(z) = z\cdot (\log_+ z)^{1 + \varepsilon}$, $\omega(z) = z \cdot \log_+ z \cdot (\log_+ \log_+ z)^{1 + \varepsilon}$ are all examples of non-Osgood moduli. The significance of the Osgood condition \eqref{eq:Osgood-condition} is that it is sharp. In other words, for every non-Osgood modulus $ \omega $, one can construct a vector field $v$, similar to the Peano’s counterexample above, for which the ODE  \eqref{eq:ODE}  admits non-unique solutions for some initial condition $\bar{x}$. 

Although alternative uniqueness criteria for ordinary differential equations are also known, such as Nagumo's condition \cite{nagumo1926hinreichende}, Perron’s criterion \cite{perron1928hinreichende}, and various refinements \cite{agarwal1993uniqueness, constantin2023uniqueness, chu2023nagumo}, the present work focuses on the classical Osgood condition. This is primarily because the Osgood criterion plays a fundamental role beyond the ODE setting and arises naturally in the study of well-posedness for partial differential equations \cite{zhang2014osgood, elgindi2014osgood, la2022regularity, drivas2023propagation, inversi2023lagrangian} as well as stochastic differential equations \cite{leon2014osgood, foondun2021osgood, galeati2022distribution}.

A strictly related problem regards the well-posedness of the Cauchy problem for the continuity (or Liouville) equation
\begin{equation}\tag{PDE}\label{eq:PDE}
    \begin{cases}
        \partial_{t}\mu_{t}+\diver(\mu_{t}v_{t})=0\qquad &\text{in $(0,T)\times \R^{d}$},\\
        \mu_{0}=\bar\mu,
    \end{cases}
\end{equation}
where we use the notation $v_{t}:=v(t,\cdot)$. Here $[0,T]\ni t\mapsto \mu_{t}\in \mathcal{M}(\R^{d})$ is in general a weakly-$*$ continuous curve of finite measures such that 
\begin{equation*}
    \int_{0}^{T}\int_{\R^{d}}|v_{t}|d|\mu_{t}|dt<\infty,
\end{equation*}
and \eqref{eq:PDE} is solved in the sense of distributions:
\begin{equation*}
    \int_{\R^{d}}\varphi d\mu_{t}=\int_{\R^{d}}\varphi d\bar\mu+\int_{0}^{t}\int_{\R^{d}}\nabla \varphi\cdot v_{t}d\mu_{t}dt\qquad \forall t\in [0,T],\quad \forall \varphi \in C^{\infty}_{c}(\R^{d}).
\end{equation*}
Note that \eqref{eq:PDE} reduces to \eqref{eq:ODE} when we consider Dirac measure solutions $\mu_{t}=\delta_{\gamma(t)}$. Conversely, bundling trajectories we can construct solutions of \eqref{eq:PDE} by means of the so called superposition principle. For $\bar\mu$-a.e. $\bar x\in \R^{d}$, take $\eta_{\bar x}\in \mathscr{P}(C([0,T];\R^{d}))$ a probability in the space of paths which is concentrated on the set of solutions to \eqref{eq:ODE}. Then, calling $e_{t}:C([0,T];\R^{d})\to \R^{d}$ the evaluation map $e_{t}(\gamma)=\gamma(t)$, for every $t\in [0,T]$, the measure defined by the disintegration
\begin{equation*}
    \mu_{t}= (e_{t})_{\#}\eta_{\bar x}\otimes d\bar\mu (\bar x)\qquad \forall t\in [0,T]
\end{equation*}
is a solution to \eqref{eq:PDE}, $\#$ denoting the push forward operator. In fact, Ambrosio \cite{Ambrosio04} proved that all non-negative solutions of \eqref{eq:PDE} are constructed in this way (see also \cite[Theorem 3.4]{ambrosio2014continuity}). 

The characterization provided by Ambrosio's superposition principle is very useful to pass well-posedness results from the \eqref{eq:ODE} level to the \eqref{eq:PDE} level. For example, under conditions \eqref{eq:assumption-growth}, \eqref{eq:Osgood-integral-time} and \eqref{eq:Osgood-condition}, we know that there exists a unique solution $t\mapsto X(t,\bar x)$ of \eqref{eq:ODE} for any starting point $\bar x$, hence we infer that for every $\bar\mu \in \mathcal{M}_{+}(\R^{d})$ the unique solution of \eqref{eq:PDE} in the class of non-negative measures is given by
\begin{equation}\label{eq:push-forward-flow}
    \mu_{t}=(X_{t})_{\#}\bar\mu\qquad \forall t\in [0,T].
\end{equation}
Here $X:[0,T]\times \R^{d}\to \R^{d}$ is the flow map associated to the vector field $v$. In \cite{ambrosio2008uniqueness}, by means of Smirnov's decomposition Theorem for $1$-currents \cite{smirnov1994decomposition}, Ambrosio and Bernard proved that under conditions \eqref{eq:assumption-growth}, \eqref{eq:Osgood-integral-time} and \eqref{eq:Osgood-condition}, the unique solution of \eqref{eq:PDE} is given by \eqref{eq:push-forward-flow}, even for general signed measures $\bar\mu\in \mathcal{M}(\R^{d})$.
In conclusion, the Osgood condition \eqref{eq:Osgood-condition} is precisely the sharp assumption needed on the spatial modulus of continuity of $v$ in order to guarantee well-posedness of both \eqref{eq:ODE} and \eqref{eq:PDE}. We refer to \cite{fjordholm2025transport, inversi2023lagrangian} for recent results extending the Osgood criterion to general Transport equations and systems of nonlocal continuity equations.  

In fluid mechanics, an important class is that of incompressible flows, which are generated by vector fields $v:[0,T]\times \R^{d}\to \R^{d}$ satisfying the divergence-free condition 
\begin{equation}\label{eq:div-free-condition}
    \diver v(t,\cdot)=0\qquad \forall t\in [0,T].
\end{equation}
Under \eqref{eq:div-free-condition}, one can equivalently rewrite \eqref{eq:PDE} as a Transport equation
\begin{equation}
\tag{T-Eq} \label{eq: transport}
 \begin{cases}
        \partial_{t}\mu_{t}+v_{t}\cdot\nabla\mu_{t}=0\qquad &\text{in $(0,T)\times \R^{d}$},\\
        \mu_{0}=\bar\mu.
    \end{cases}
\end{equation}
In this context, Osgood-type conditions play an important role in questions of regularity and uniqueness \cite{zhang2014osgood, elgindi2014osgood, la2022regularity, drivas2023propagation}, and establishing whether the Osgood criterion is sharp for divergence-free vector fields, both at the level of ODE and PDE, is therefore of significant interest. While constructing non-unique solutions to the ODE for any non-Osgood modulus of continuity and some initial condition $\bar{x}$ is straightforward, the non-uniqueness of a single trajectory, or even finitely many trajectories, is not sufficient to guarantee the existence of two distinct flow maps, that is, flow maps that differ on a set of positive measure. Consequently, for a given non-Osgood modulus of continuity, a more subtle and important question is whether non-uniqueness of trajectories can occur on a set $E\subset \R^{d}$ of initial conditions with positive Lebesgue measure, $\mathscr{L}^{d}(E) > 0$, for a divergence-free vector field. 

In this direction, Liss and Elgindi \cite{elgindi2024norm} constructed a divergence-free vector field with modulus of continuity $\omega(z) = z |\log z|^2$ for which trajectories are non-unique on a set of positive measure. They established this result in the context of anomalous dissipation in the scalar diffusion equation. Their construction is based on alternating shear flows with frequency of the flow exploding to infinity at a finite time $t = T$, which corresponds to the onset of non-uniqueness. However, beyond the modulus of continuity $\omega(z) = z |\log z|^2$, and in particular for general non-Osgood moduli, the construction of a divergence-free vector field for which trajectories exhibit non-uniqueness on a set of positive measure remained open (see the discussion in \cite{elgindi2024norm}[Section 3.1]).

The main result of this paper gives an answer to this question: for every non-Osgood modulus of continuity we construct a divergence-free vector field for which trajectories are non-unique in a set of positive measure. This establishes the sharpness of the Osgood criterion for divergence-free vector fields when the uniqueness of flow maps is concerned. In fact, our result is stronger. From a PDE perspective, we obtain non-unique solutions in $L^{\infty}_{t}L^{1}_{x}$, the class of measures which are absolutely continuous with respect to Lebesgue measure for almost every time. Denoting by $C^{\omega}(\R^{d})$ the space of $\omega$-continuous vector fields (See \Cref{subsec:MOC}), the precise statement of the result is as follows:
\begin{thm}\label{thm:main}
    Let $d\ge 2$ and $T > 0$. Given any non-Osgood modulus of continuity $\omega:[0,\infty)\to [0,\infty)$, there exists a divergence-free velocity field $v \in C([0, T]; C^\omega(\mathbb{R}^d))$ such that there are two distinct solutions $\mu^1, \mu^2$ of the equation \eqref{eq:PDE} in the class $C_{w^\ast}([0, T]; \mathcal{M}_{+}(\mathbb{R}^d))$ starting from the same initial condition $\bar\mu\ll \mathscr{L}^{d}$. Moreover, these solutions satisfy $\mu^1_t \ll \mathscr{L}^{d}$ and $\mu^2_t \ll \mathscr{L}^{d}$ for a.e. $t \in [0, T]$. In particular, there exist a Borel set $E\subset \R^{d}$ with $\mathscr{L}^{d}(E)>0$ such that for every $\bar x\in E$ there are at least two distinct solutions of \eqref{eq:ODE} starting at $\bar x$.
\end{thm}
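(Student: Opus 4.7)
I would construct $v$ as a superposition of divergence-free building blocks acting on an infinite sequence of nested spatial scales $\delta_0 > \delta_1 > \cdots \to 0$, all localized in a fixed reference cube $Q \subset \R^d$. Since $\omega$ is non-Osgood, $\int_0^1 dz/\omega(z) < \infty$, so if the block at scale $\delta_n$ is active for a time $\tau_n \asymp \int_{\delta_{n+1}}^{\delta_n} dz/\omega(z)$ with unit $C^\omega$ seminorm, the total time $\sum_n \tau_n$ stays finite and the cascade fits in $[0,T]$. The initial datum $\bar \mu$ is taken as a smooth truncation of $\mathbf{1}_Q \mathscr{L}^d$, and the two anomalous solutions will correspond to two inequivalent choices, made scale by scale, of how the fine dynamics rearranges the mass at the macroscopic scale.

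\textbf{Building block and parallelization.} At scale $n$, I would take a smooth compactly supported divergence-free field of the form $V_n(x) = \omega(\delta_n)\, V(x/\delta_n)$ for a fixed model $V$ of cellular or shear type, whose time-$\tau_n$ flow on each scale-$\delta_n$ cube admits two natural symmetric options $T_n$ and $T'_n$: both split the cube into a union of scale-$\delta_{n+1}$ cubes, but in mirror-image ways. For a general non-Osgood $\omega$, activating the scales sequentially is not viable, since it either blows up the $C^\omega$-norm at the times when only the deepest scale is alive, or exhausts the time budget. The remedy is the \emph{parallelization} announced in the introduction: all scales act simultaneously through smooth time envelopes $\psi_n$, giving $v(t,x) = \sum_n \psi_n(t)\, V_n(x - c_n(t))$ for suitable (possibly drifting) centers $c_n(t)$. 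The scale-matched amplitude $\|V_n\|_\infty \asymp \omega(\delta_n)$ combined with disjointness of supports across translates at different scales yields $[v(t,\cdot)]_\omega \lesssim \sum_n \psi_n(t) \lesssim 1$ uniformly in $t$, and divergence-freeness is preserved term by term.

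\textbf{Two anomalous solutions via a fixed point.} Two candidate non-unique $\mu^i$ arise heuristically by choosing $T_n$ at every scale for $\mu^1$ and $T'_n$ for $\mu^2$. To realize them as genuine PDE solutions with $\mu^i_t \ll \mathscr{L}^d$ for a.e.\ $t$, I would adapt the fixed-point framework of \cite{BCK2024}: iterate on pairs of $L^1_x$ densities $(\rho^1,\rho^2)$, with the $N$-th iterate evolving $\rho^i$ by the transport equation driven by the truncated velocity $v^{(N)}$ built from scales $n\le N$ and then using the output to calibrate which symmetric option ($T_{N+1}$ vs.\ $T'_{N+1}$) is activated at the next scale. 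Parallelization is what makes this loop closable: each truncation $v^{(N)}$ already has $C^\omega$-norm uniformly bounded in $N$, so classical well-posedness of the transport equation with $v^{(N)}$ shows that the push-forward preserves absolute continuity, and the $L^1_x$ bound passes to the limit for a.e.\ $t\in[0,T]$. Macroscopic disagreement between $T_0$ and $T'_0$ persists through every iteration, which supplies both the non-uniqueness at the PDE level and the Borel set $E$ of positive Lebesgue measure of initial conditions with non-unique ODE trajectories.

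\textbf{Main obstacle.} The crux is to reconcile three competing requirements simultaneously across all infinitely many scales: (i) a uniform-in-time $C^\omega$-bound on the parallelized $v$, which rigidly fixes the amplitudes to be of order $\omega(\delta_n)$; (ii) finiteness of the time budget together with the ability of the block at scale $n$ to effect a genuine splitting at scale $\delta_{n+1}$, which fixes $\tau_n$ to be of order $\int_{\delta_{n+1}}^{\delta_n}dz/\omega(z)$; and (iii) convergence of the fixed-point iteration in a topology strong enough to preserve both the $L^1_x$ regularity a.e.\ in time and the macroscopic disagreement between $\mu^1$ and $\mu^2$ in the limit. The combination of parallelization and the modified BCK fixed point is precisely what is designed to make all three of these coexist for an arbitrary non-Osgood modulus $\omega$, and I expect the detailed verification of these uniform estimates, especially the propagation of $L^1_x$ regularity under the limit, to be the main technical content of the proof.
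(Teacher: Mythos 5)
Your overall architecture (parallelized building blocks across nested scales, amplitudes tied to $\omega$, time budget $\sum_n\tau_n<\infty$ from the non-Osgood condition, a BCK-style fixed point) is aligned with the paper, but two steps in your plan do not work as stated. First, your mechanism for producing \emph{two} solutions is not coherent: the theorem requires two solutions of \eqref{eq:PDE} driven by the \emph{same} velocity field, whereas your ``mirror-image'' options $T_n$ versus $T'_n$ are choices made in building the velocity itself, so following them would produce two different fields, each with a unique (well-posed at every truncated level) evolution. You never explain how a single $v$ admits both behaviors. The paper avoids this entirely: it constructs \emph{one} anomalous solution $\mu$ by a fixed point (a family $\{\bar B^k\},\{\bar\Theta^k\}$ indexed by the scale, needed because the scale ratios $\ell_{k+1}/\ell_k$ cannot be kept constant for a general non-Osgood $\omega$), runs time backwards, and takes as the second solution the trivial stationary one $\mathbbm{1}_Q\mathscr{L}^d$ (stationary because $v$ is divergence-free and supported in $Q$); the positive-measure set $E$ of initial points with non-unique trajectories then comes from Ambrosio's superposition principle, not from ``persistence of macroscopic disagreement,'' which you assert but do not prove.

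Second, your claim that ``the $L^1_x$ bound passes to the limit for a.e.\ $t$'' is a genuine gap: absolute continuity is not stable under weak-$*$ limits, and in these cascades the densities concentrate without any equi-integrability (indeed, at the terminal time of the un-reversed evolution the limit measure is supported on a Lebesgue-null set, as in the Cantor construction of \Cref{prop: nonunique traj rev}). The paper instead builds absolute continuity into the fixed point: on the translation intervals the solution is an \emph{explicit} normalized indicator of finitely many cubes, on the breaking intervals it is a finite sum of rescaled copies of deeper-level solutions, and a recursive estimate on the measure of ``good'' times ($\mathscr{L}^1(\mathcal{E}_k)\ge \tfrac12+\tfrac12\inf_j\mathscr{L}^1(\mathcal{E}_j)$, forcing full measure) yields $\mu_t\ll\mathscr{L}^d$ for a.e.\ $t$. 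Relatedly, your uniform bound $[v(t,\cdot)]_\omega\lesssim\sum_n\psi_n(t)\lesssim 1$ is too quick: with all scales active simultaneously and each block of unit $C^\omega$ seminorm, the sum of seminorms is the number of active scales; the correct argument uses that coarser-scale blocks are locally constant on the finer supports (so only one gradient term is nonzero at any point), an interpolation lemma, and crucially an auxiliary non-Osgood modulus $\tilde\omega$ with $\omega/\tilde\omega\to\infty$ (\Cref{lem:auxiliary-moc}), which both restores continuity in time at the accumulation time and supplies the weight $W$ that makes the fixed-point map a contraction. Without these ingredients your three ``competing requirements'' cannot all be closed for an arbitrary non-Osgood $\omega$.
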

As the velocity field is divergence-free, our result can also be phrased in terms of the transport equation:
we construct examples of incompressible velocity fields (corresponding to a given non-Osgood modulus) for which the transport equation \eqref{eq: transport} exhibits non-unique solutions in the space $L^\infty_t L^1_x$ starting from the same initial condition $\bar \mu\ll\mathscr{L}^{d}$.

One of the main novelties of our construction is that the velocity field exhibits superimposed (or parallel) motions at several nested length scales simultaneously. This idea, which we call \textit{parallelization}, is fundamentally different from many existing explicit constructions concerning mixing \cite{AlbertiCrippaMazzucato19, YaoZlatos17}, anomalous dissipation \cite{drivas22anomdissp, colombo2023anomalous, elgindi2024norm} or examples of non-uniqueness \cite{depauw2003non, de2022smoothing, K2023, BCK2024, huysmans2025non} in the transport or scalar diffusion equations, where all the action takes place at a given length scale at a given time.

Before proving Theorem \ref{thm:main}, we first construct a velocity field (for a prescribed non-Osgood modulus of continuity) for which the trajectories are non-unique for almost every starting point in a cube where the field is supported, but the associated PDE solution is not absolutely continuous with respect to Lebesgue measure, except at a single time (See \Cref{prop: nonunique traj}). The purpose of this preliminary construction is to introduce the idea of parallelization, which can be viewed as a parallelized version of the construction given in Kumar \cite{K2023}.
Once this base construction is established, we combine it with a fixed-point argument, in the spirit of Bru\`e, Colombo and Kumar \cite{BCK2024}. The introduction of parallelization leads to new technical challenges while using the fixed-point approach, which do not arise in \cite{BCK2024}. We overcome these difficulties and thereby, complete the proof of Theorem \ref{thm:main}.

We emphasize that parallelization is an important ingredient in our construction of a divergence-free velocity field that leads to non-uniqueness of the flow map for a given non-Osgood modulus. Without the concept of parallelization, the best we have been able to achieve with similar constructions is non-uniqueness with a modulus of continuity $\omega(z) = z |\log z|^2$, which coincides with the result of Liss and Elgindi \cite{elgindi2024norm}. As suggested by the heuristic argument outlined in \Cref{subsec: overview const traj}, some form of parallelized motion may be necessary to go beyond this threshold and reach the sharp result of general non-Osgood moduli. From this perspective, we believe that the concept of parallelization introduced in this paper may prove fruitful in other settings as well. In particular, it has the potential to be combined with convex integration technique \cite{de2009euler,DeLellisSzekelyhidi13}, for instance in the context of transport equations \cite{modena2018non, modena2020convex, BCDL2021, colombo2025convex}, or of two-dimensional incompressible Euler equations with vorticity in $L^p$ \cite{buck2024non, brue2024flexibility, buck2024compactly}, possibly leading to further progress on non-uniqueness results for fluid PDEs.

The paper is organized as follows. In Section \ref{sec:preliminaries}, we introduce some important notations, and we prove a few auxiliary lemmas. Section \ref{sec:proof weak} presents the base construction introducing the concept of parallelization and leading to non-uniqueness of flow maps, as mentioned above. In Section \ref{sec: proof main thm}, we develop the main construction, which combines parallelization with a fixed point argument, and allows to conclude the proof of Theorem \ref{thm:main}.

\section{Notations and basic lemmas}\label{sec:preliminaries}
In this section we set some notations and lemmas that will be useful in the constructions of \Cref{sec:proof weak} and \Cref{sec: proof main thm}. In \Cref{subsec:Cantor} we introduce the notations related to the Cantor set, which represents the basic geometric background of later constructions. Next, in \Cref{subsec:BB} we construct the building blocks, which are compactly supported incompressible vector fields used to rigidly translate cubes of different sizes. Finally, \Cref{subsec:MOC} contains a few lemmas related to moduli of continuity.  
\subsection{The Cantor set}\label{subsec:Cantor}
The preliminary velocity field of Section \ref{sec:proof weak} is designed according to a dyadic Cantor set construction in the spirit of \cite{K2023}. Let the dyadic alphabet be given by
\begin{equation*}
    \mathfrak{S}:=\left\{-1,1\right\}^{d}.
\end{equation*}
For $\sigma \in \mathfrak{S}^n$, $n\ge 1$, we write $\sigma = (\sigma_1,  \dots ,\sigma_n)$ to denote the components of $\sigma$. Given $n\le n'$ and $\sigma \in \mathfrak{S}^{n}$, $\tilde\sigma \in \mathfrak{S}^{n'}$, we write $\sigma \subset \tilde\sigma$ if $\sigma_i = \tilde\sigma_{i}$ for all $i \in \{1, \dots, n\}.$ Finally, for every $\sigma \in \mathfrak{S}^n$, $n \geq 2$, we use $\sigma^\prime$ to denote the unique element in $\mathfrak{S}^{n-1}$ such that $\sigma^\prime \subset \sigma$. 

Given any sequence of positive numbers $\eta=\{\eta_{n}\}_{n\ge 1}\in (0,\infty)^{\mathbb{N}}$, we denote by $\nu^{\eta}=\{\nu^{\eta}_{n}\}_{n\ge 0}\in (0,\infty)^{\mathbb{Z}_{\geq0}}$ the sequence of cumulative sums, i.e.
\begin{equation} \label{eq: nu from eta}
    \nu^{\eta}_{0}:=0,\qquad \nu^{\eta}_{n}:=\overunderset{n}{j=1}{\sum}\eta_{j}\quad \forall n\ge 1.
\end{equation}
We then denote by $\{\ell^{\eta}_{n}\}_{n\ge 0}$ the sequence of positive lengths obtained as follows:
\begin{equation*}
    \ell_{n}^{\eta}:=2^{-n-\nu^{\eta}_{n}}\qquad \forall n\ge 0.
\end{equation*}
For every number $\eps \in (0,\infty)$ and every symbol $\sigma \in \mathfrak{S}$, let $T^{\eps}_{\sigma}:\mathbb{R}^{d}\rightarrow \mathbb{R}^{d}$ be the similarity transformation
\begin{equation*}
    T^{\eps}_{\sigma}(x):= 2^{-1-\eps} x+ \frac{\sigma}{4},
\end{equation*}
and consider the unit cube 
\begin{equation*}
    Q:=\left[-\frac{1}{2},\frac{1}{2}\right]^{d}.
\end{equation*}
The $n$th Cantor generation set associated to the sequence $\eta$ is defined as
\begin{equation*}
   \mathscr{C}_{n}^{\eta}:= \underset{\sigma_{1}\in \mathfrak{S}}{\bigcup}\dots \underset{\sigma_{n}\in \mathfrak{S}}{\bigcup}\,T^{\eta_{1}}_{\sigma_{1}}\circ\dots\circ T^{\eta_{n}}_{\sigma_{n}}(Q)\qquad \forall n\ge 1.
\end{equation*}
Each Cantor generation $\mathscr{C}_{n}^{\eta}$ is the union of $2^{dn}$ pairwise disjoint closed cubes, each of side length $\ell^{\eta}_{n}$. Note that the Cantor generations are nested, and all contained inside the unit cube: $\mathscr{C}^{\eta}_{n+1}\subset \mathscr{C}^{\eta}_{n}\subset Q$ for every $n\ge 1$. By taking the intersection of all the Cantor generations, we obtain the non-empty compact set 
\begin{equation} \label{eq: cantor set}
    \mathscr{C}^{\eta}:= \bigcap_{n\ge 1}\mathscr{C}^{\eta}_{n}\subset Q
\end{equation}
which we call the Cantor set associated with the sequence $\eta$. The measure theoretic properties of the Cantor set $\mathscr{C}^{\eta}$ are clearly influenced by the choice of the sequence $\eta=\{\eta_{n}\}_{n\ge 1}\subset (0,\infty)$. For instance, one can compute the Haussdorf dimension of $\mathscr{C}^{\eta}$ in terms of the asymptotic behavior of $\eta_{n}$ as $n\to \infty$. For the sequel, we only note that $\mathscr{L}^{d}(\mathscr{C}^{\eta})=0$ whenever $\sum_{n\ge 1}\eta_{n}=\infty$.    

The construction, as describe above, provides a natural way to assign to each point $x\in \mathscr{C}^{\eta}$ a target point $S^{\eta}(x)\in Q$, obtained by first associating $x$ with the unique sequence $\{\sigma_{n}(x)\}_{n\ge 1}\in \mathfrak{S}^{\N}$  of symbols from the dyadic alphabet for which 
\begin{equation*}
    \left\{x\right\}=\underset{n\ge 1}{\bigcap}\,T^{\eta_{1}}_{\sigma_{1}(x)}\circ \dots \circ T^{\eta_{n}}_{\sigma_{n}(x)}(Q),
\end{equation*}
and then using this sequence for the binary representation of $x$ inside $Q$:
\begin{equation} \label{eq: S eta}
    S^{\eta}: \mathscr{C}^{\eta}\rightarrow Q,\qquad  S^{\eta}(x):= \underset{n\ge 1}{\sum}2^{-n-1}\sigma_{n}(x).
\end{equation}
One can check that the map $S^{\eta}$ is surjective and essentially injective: almost every point $x\in Q$ has a unique pre-image $S^{-1}(x)$ (except the points for which $2^m x \in \mathbb{Z}^{d}$ for some $m \in \mathbb{N}$). 

We conclude this part by defining a few points in the cube $Q$ that will be used later in this paper: 
\begin{equation} \label{def: p and s}
    p^{\eta}_{n,\sigma}:= \sum_{j=1}^{n}\frac{\ell^{\eta}_{j-1}}{4}\sigma_{j},\qquad s_{n,\sigma}:= S^{\eta}(p_{n,\sigma}^{\eta})= \overunderset{n}{j=1}{\sum}2^{-j-1}\sigma_{j}\qquad \qquad \forall \; n\ge 1,\quad \forall \; \sigma\in \mathfrak{S}^{n}. 
\end{equation}
Observe that $p^{\eta}_{n,\sigma}$ and $s_{n,\sigma}$ coincide, respectively, with the center of the cube of the $n$th Cantor generation and of the $n$th dyadic partition associated to the $n$-uple $\sigma$. 

\subsection{The building block}\label{subsec:BB}
The following lemma provides the construction of the fundamental building block vector field, which will be used to translate cubes of various sizes in Section \ref{sec:proof weak} and Section \ref{sec: proof main thm}. This type of building block was first used in \cite{BCDL2021}, and later considered in several other papers on non-uniqueness for transport equations (\cite{K2023}, \cite{BCK2024}, \cite{colombo2025convex}). We include here the construction for the reader's convenience. 

\begin{lem}\label{lem-Building-block}
    Given $e\in \R^{d}$,
   there is a smooth vector field $u^{e}\in C^{\infty}(\R^{d};\R^{d})$ such that
    $$\diver u^{e}=0,\qquad u^{e}(x)=0\quad \forall x\in\R^{d}\setminus \frac{3}{4}Q,\qquad u^{e}(x)=e \quad \forall x\in \frac{1}{2}Q,\qquad \lVert u^{e}\rVert_{C^{1}(\R^{d})}\lesssim_{d} |e|.$$ 

\end{lem}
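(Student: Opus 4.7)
My plan is to exploit the classical fact that any vector field of the form $u_i = \sum_{j} \partial_j A_{ij}$, with $A$ a smooth antisymmetric matrix field, is automatically divergence-free: $\diver u = \sum_{i,j} \partial_i \partial_j A_{ij} = 0$, since $\partial_i \partial_j$ is symmetric in $(i,j)$ while $A_{ij}$ is antisymmetric. This reduces the problem to exhibiting a smooth antisymmetric matrix field $A$ supported in $\tfrac{3}{4}Q$ whose associated ``row divergence'' equals $e_i$ on $\tfrac{1}{2}Q$.

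The candidate potential I have in mind is
$$M_{ij}(x) := \frac{1}{d-1}\bigl(e_i\,x_j - e_j\,x_i\bigr),$$
which is antisymmetric by construction, and for which a one-line computation yields $\sum_j \partial_j M_{ij}(x) = e_i$ everywhere. To localize, I will multiply by a fixed cutoff $\chi \in C^\infty_c(\R^d)$ with $0 \le \chi \le 1$, $\chi \equiv 1$ on $\tfrac{1}{2}Q$ and $\supp \chi \subset \tfrac{3}{4}Q$, and set $A_{ij}(x) := \chi(x)\,M_{ij}(x)$ and $u^e_i(x) := \sum_j \partial_j A_{ij}(x)$. Scalar multiplication preserves antisymmetry of $A$, so $u^e$ is divergence-free.

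Verifying the three remaining conditions is then immediate: outside $\supp \chi$ we have $A \equiv 0$ and hence $u^e \equiv 0$; on $\tfrac{1}{2}Q$ we have $\chi \equiv 1$ and $\nabla \chi \equiv 0$, so by the Leibniz rule $u^e_i = \sum_j \partial_j M_{ij} = e_i$; and $u^e \in C^\infty$ since $\chi$ and $M$ are. For the $C^1$ estimate, expanding $u^e_i$ via the product rule produces terms that are products of derivatives of $\chi$ of order at most two with components of $M$ or its first derivative. On the bounded set $\tfrac{3}{4}Q$ we have $|M(x)| \lesssim_d |e|$ and $|\nabla M(x)| \lesssim_d |e|$, while $\lVert \chi \rVert_{C^2}$ depends only on $d$, yielding $\lVert u^e\rVert_{C^1(\R^d)} \lesssim_d |e|$. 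I expect no real obstacle here; the only genuine insight is the antisymmetric-potential trick that builds the divergence-free constraint into the ansatz, after which everything reduces to a cutoff argument.
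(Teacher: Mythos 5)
Your proof is correct, and it is a mild but genuine variant of the paper's argument rather than a reproduction of it. The paper also builds the divergence-free constraint into a potential and then cuts off, but it does so with a scalar stream function: it picks a unit vector $\tilde e\perp e$ and a compactly supported $\psi$ with $\psi(x)=\tilde e\cdot x$ on $\frac{1}{2}Q$, and sets $u^{e}=\partial_{\tilde e}\psi\, e-\partial_{e}\psi\,\tilde e$; in your language this is the antisymmetric potential $A=\psi\,(e\otimes\tilde e-\tilde e\otimes e)$, a rank-two matrix supported on the $2$-plane spanned by $e,\tilde e$. You instead take the ``angular-momentum'' potential $A_{ij}=\chi(x)\,\tfrac{1}{d-1}(e_i x_j-e_j x_i)$, which avoids choosing an auxiliary direction at the price of the explicit $\tfrac{1}{d-1}$ normalization; both routes give $\diver u^{e}=0$ for free, $u^{e}\equiv 0$ off $\frac{3}{4}Q$, $u^{e}\equiv e$ on $\frac{1}{2}Q$, and $\lVert u^{e}\rVert_{C^{1}}\lesssim_{d}|e|$ with the same one-line product-rule estimate. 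Two small remarks: both constructions implicitly need $d\ge 2$ (the paper for the existence of $\tilde e$, yours for the factor $\tfrac{1}{d-1}$), consistent with the paper's standing assumption; and your claim $\nabla\chi\equiv 0$ on all of $\frac{1}{2}Q$ does hold as stated (boundary points of $\frac{1}{2}Q$ are maxima of $\chi$ since $0\le\chi\le 1$), though it is cleaner to simply require $\chi\equiv 1$ on a neighborhood of $\frac{1}{2}Q$, e.g.\ on $\frac{5}{8}Q$ with $\supp\chi\subset\frac{3}{4}Q$.
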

\begin{proof}
 Up to a multiplication by a constant we may assume that $|e|=1$.
 Let $\tilde{e}\in \R^{d}$ be such that $|\tilde e|=1$ and $e\cdot \tilde{e}=0$. Let $\psi\in C^{\infty}_{c}(\R^{d})$ be such that 
 \begin{equation}\label{eq:properties-stream-function-BB}
     \psi(x)=0 \quad \forall x\in \R^{d}\setminus \frac{3}{4}Q,\qquad \psi(x)=\tilde{e}\cdot x\quad \forall x\in \frac{1}{2}Q.
 \end{equation}
 We define $u^{e}:= \partial_{\tilde{e}}\psi e-\partial_{e}\psi \tilde e\in C^{\infty}_{c}(\R^{d})$. Note that $\diver u^{e}= \partial_{e}\partial_{\tilde e}\psi-\partial_{\tilde e}\partial_{e}\psi =0$ in $\R^{d}$. Moreover, by \eqref{eq:properties-stream-function-BB},
 \begin{equation*}
     u^{e}(x)=0\quad \forall x\in \R^{d}\setminus \frac{3}{4}Q,\qquad u^{e}(x)= e\quad \forall x\in \frac{1}{2}Q,
 \end{equation*}
 as desired. Finally, $\lVert u^{e}\rVert_{C_{1}(\R^{d})}\lesssim \lVert \psi\rVert_{C^{2}(\R^{d})}\lesssim_{d} 1$. 
\end{proof}
The prototypical use of $u^{e}$ in the sequel is as follows: the time dependent field $v:[0,1]\times \R^{d}\to \R^{d}$ such that $v(t,x):=u^{e}(x-te)$ rigidly translates the cube $Q/2$ to $e+Q/2$ in a unity of time. That is, the flow map $X:[0,1]\times \R^{d}\to \R^{d}$ associated to $v$ is such that $X(t,Q/2)=te+Q/2$ for every $t\in [0,1]$, or $\mu_{t}:= \mathbbm{1}_{te+Q/2}\mathscr{L}^{d}$ solves the continuity equation \eqref{eq:PDE} with velocity field $v$. Of course, suitable time reparametrizations and space rescalings of the basic example above allow to translate cubes of arbitrary sizes in arbitrary intervals of time. 

\subsection{Moduli of continuity}\label{subsec:MOC}
We recall that a modulus of continuity is a continuous function $\omega:[0,\infty)\to [0,\infty)$ which is strictly increasing, concave and such that $\omega(0)=0$. The modulus $\omega$ is Osgood if condition \eqref{eq:Osgood-condition} holds, and non-Osgood otherwise. 
Let $C^{\omega}(\R^{d})$ be the space of functions $u:\R^{d}\to \R^{d}$ such that 
\begin{equation*}
    \lVert u\rVert_{C^{\omega}(\R^{d})}:= \lVert u\rVert_{L^{\infty}(\R^{d})}+[u]_{C^{\omega}(\R^{d})}<\infty,
\end{equation*}
the semi-norm $[u]_{C^{\omega}(\R^{d})}$ being defined as
\begin{align}\label{eq:def-seminorm-omega}
[u]_{C^{\omega}(\R^{d})}:= \sup_{\substack{x,y\in \R^{d}\\ x\neq y}}\frac{|u(x)-u(y)|}{\omega(|x-y|)}.
\end{align}

We start with the following interpolation lemma, which reduces the boundedness of the semi-norm $[u]_{C^{\omega}(\R^{d})}$ to suitable bounds on the uniform norm $\lVert u\rVert _{L^{\infty}(\R^{d})}$ and on the Lipschitz semi-norm $\lVert \nabla u\rVert_{L^{\infty}(\R^{d})}$.

\begin{lem}\label{lem:interpolation}
    Let $\omega:[0,\infty)\to [0,\infty)$ by any modulus of continuity. Let $u:\R^{d}\to \R^{d}$ be such that
    \begin{equation*}
        \lVert u\rVert_{L^{\infty}(\R^{d})}\le C\omega(r),\qquad \lVert \nabla u\rVert_{L^{\infty}(\R^{d})}\le C\frac{\omega(r)}{r},
    \end{equation*}
    for some $r,C>0$. 
    Then $[u]_{C^{\omega}(\R^{d})}\le 2C$.
\end{lem}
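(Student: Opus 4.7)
The plan is the standard interpolation trick: bound the ratio $|u(x)-u(y)|/\omega(|x-y|)$ by splitting into the cases $|x-y|\le r$ (small separation, use the gradient bound) and $|x-y| > r$ (large separation, use the $L^\infty$ bound), and check that in each regime one recovers the bound $2C\omega(|x-y|)$.

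For the large-separation case, I would simply estimate
\[
|u(x)-u(y)|\le 2\|u\|_{L^\infty(\R^d)}\le 2C\,\omega(r)\le 2C\,\omega(|x-y|),
\]
using monotonicity of $\omega$ in the last step. For the small-separation case, the mean value inequality gives $|u(x)-u(y)|\le \|\nabla u\|_{L^\infty}\cdot|x-y|\le C\,\omega(r)\cdot |x-y|/r$, and the point is to argue that $\omega(r)/r\cdot|x-y|\le \omega(|x-y|)$ whenever $|x-y|\le r$. This is where I expect the actual content of the lemma to live, though it is not really an obstacle: since $\omega$ is concave with $\omega(0)=0$, the quotient $z\mapsto\omega(z)/z$ is non-increasing on $(0,\infty)$ (for $0<z_1<z_2$, concavity gives $\omega(z_1)\ge (z_1/z_2)\omega(z_2)+(1-z_1/z_2)\omega(0)=(z_1/z_2)\omega(z_2)$). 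Applying this with $z_1=|x-y|$ and $z_2=r$ yields $\omega(r)/r\le\omega(|x-y|)/|x-y|$, hence $|u(x)-u(y)|\le C\,\omega(|x-y|)\le 2C\,\omega(|x-y|)$.

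Combining the two cases, we obtain $|u(x)-u(y)|\le 2C\,\omega(|x-y|)$ for every $x\ne y$, and taking the supremum gives $[u]_{C^\omega(\R^d)}\le 2C$, as claimed. The only subtlety in the write-up is to record the monotonicity of $\omega(z)/z$ cleanly; everything else is immediate.
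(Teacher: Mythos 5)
Your proposal is correct and follows essentially the same argument as the paper: split into the cases $|x-y|\le r$ and $|x-y|>r$, using the gradient bound with concavity (monotonicity of $\omega(z)/z$) in the first case and the $L^\infty$ bound with monotonicity of $\omega$ in the second. The only difference is that you spell out the concavity step explicitly, which the paper leaves implicit.
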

\begin{proof}
    Let $x,y \in \R^{d}$ be such that $x\neq y$. If $|x-y|\le r$, The Lipschitz bound and the concavity of $\omega$ give
    \begin{equation*}
        |u(x)-u(y)|\le C\frac{\omega(r)}{r}|x-y|\le C\omega(|x-y|).
    \end{equation*}
    If instead $|x-y|>r$, we can use the $L^{\infty}$ bound and the monotonicity of $\omega$ to get
    \begin{equation*}
        |u(x)-u(y)|\le 2C\omega(r)\le 2C\omega(|x-y|).
    \end{equation*}
    Hence, in all cases $|u(x)-u(y)|\le 2C\omega(|x-y|)$, as desired. 
\end{proof}

Next we show that for any non-Osgood modulus $\omega$ there exists another non-Osgood modulus $\tilde{\omega}$ which grows slower than $\omega$ at zero, that is $\omega(r) / \tilde{\omega}(r) \to \infty$ as $r \to 0^+$. Basing the constructions of \Cref{sec:proof weak} and \Cref{sec: proof main thm} on the auxiliary more restrictive modulus of continuity $\tilde{\omega}$, will enable us to obtain the desired $C_{t}C_{x}^{\omega}$ continuity of the velocity field. We first need a lemma about infinite series.
\begin{lem} \label{lem: series}
Let $\{b_n\}_{n\ge 1}$ be a non-increasing sequence of positive numbers such that $\sum_{n\ge 1}b_{n}<\infty$. There exists a non-decreasing sequence of positive numbers $\{a_n\}_{n\ge 1}$ such that $a_{1}=1$, $a_n \to \infty$ as $n \to \infty$, $\sum_{n\ge 1}a_{n+1} b_n<\infty$, and 
\begin{align} \label{eq: extra const}
\left(\frac{2}{a_n} - \frac{1}{a_{n+1}}\right) b_{n-1} \geq \left(\frac{2}{a_{n-1}} - \frac{1}{a_{n}}\right) b_{n} \qquad \forall n\ge 2.
\end{align}
\end{lem}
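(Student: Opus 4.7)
My approach is to reformulate \eqref{eq: extra const} in terms of an auxiliary sequence and then to construct that sequence directly.

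First, dividing \eqref{eq: extra const} by the positive quantity $b_n b_{n-1}$ reveals that the inequality is equivalent to requiring the sequence
\[
h_n \;:=\; \frac{1}{b_n}\!\left(\frac{2}{a_n}-\frac{1}{a_{n+1}}\right), \qquad n\ge 1,
\]
to be non-decreasing. I will therefore parametrize the construction by a non-decreasing positive sequence $(h_n)_{n\ge 1}$, from which $(a_n)$ is recovered via the forward recursion $1/a_{n+1} = 2/a_n - h_n b_n$ started at $a_1 = 1$. Iterating this relation and imposing the normalization $\sum_{k\ge 1}2^{-k}h_k b_k = 1$ yields the closed form
\[
\frac{1}{a_n} \;=\; \sum_{k\ge n} 2^{n-1-k}\, h_k b_k,
\]
under which $a_n$ non-decreasing is equivalent to $h_n b_n \ge 1/a_n$ and $a_n\to\infty$ is equivalent to $h_n b_n \to 0$.

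Second, I exhibit a concrete non-decreasing $(h_n)$. Let $R_n := \sum_{k>n}b_k$, so that $R_n \searrow 0$ and $b_n = R_{n-1} - R_n$. A natural candidate, suggested by the classical Abel--Dini device, is $h_n$ proportional to $1/\sqrt{R_{n-1}}$: this is automatically non-decreasing since $R_n$ is non-increasing, and the telescoping inequality
\[
\frac{b_n}{\sqrt{R_{n-1}}} \;=\; \frac{R_{n-1}-R_n}{\sqrt{R_{n-1}}} \;\le\; 2\bigl(\sqrt{R_{n-1}}-\sqrt{R_n}\bigr)
\]
gives $\sum_n b_n/\sqrt{R_{n-1}} \le 2\sqrt{R_0} < \infty$. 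This simultaneously supplies a finite normalization constant and the decay $h_n b_n \to 0$. Should the bare choice fail to satisfy $h_n b_n \ge 1/a_n$ at some indices, I replace $h_n$ by the running maximum with a corrective floor (\emph{i.e.} $h_n := \max_{k\le n}\bigl\{c/\sqrt{R_{k-1}},\ 1/(a_k b_k)\bigr\}$), which preserves the non-decreasing nature while guaranteeing the monotonicity of $(a_n)$.

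The main obstacle is the verification of the summability $\sum_n a_{n+1} b_n < \infty$, since the closed form for $1/a_{n+1}$ involves the entire tail $(h_k b_k)_{k>n}$ rather than a single term. The key observation is that because $(h_n)$ is non-decreasing and $(b_n)$ is non-increasing, one can bound $1/a_{n+1}$ from below by a constant multiple of $h_{n+1} b_{n+1}$ (by grouping the geometric weights $2^{n-k}$ against the slowly varying $h_k b_k$), whence
\[
a_{n+1} b_n \;\lesssim\; \frac{b_n}{h_{n+1} b_{n+1}} \;\asymp\; \frac{b_n}{b_{n+1}}\sqrt{R_n},
\]
up to constants. The required summability then reduces, via a careful use of the monotonicity of $(b_n)$ and a further telescoping against $\sqrt{R_n}$, to the finite bound $\sum_n b_n/\sqrt{R_{n-1}} < \infty$ already established. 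A final rescaling $h_n \mapsto h_n/\bigl(\sum_{k\ge 1}2^{-k}h_k b_k\bigr)$ enforces $a_1 = 1$ exactly without affecting any of the other properties, and the lemma follows.
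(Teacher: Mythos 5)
Your reformulation is exact and the closed form $1/a_n=\sum_{k\ge n}2^{n-1-k}h_kb_k$ (under the normalization $\sum_{k\ge1}2^{-k}h_kb_k=1$) is correct, as are the equivalences ``$a_n$ non-decreasing $\Leftrightarrow h_nb_n\ge 1/a_n$'' and ``$a_n\to\infty \Leftrightarrow h_nb_n\to0$''. The instantiation, however, breaks down at two points. First, the bare choice $h_k\propto 1/\sqrt{R_{k-1}}$ really does violate $h_nb_n\ge 1/a_n$ in general: take $b_1=b_2=b_3=b_4=1$ followed by a very small, fast-decaying tail; then $R_0\approx4,\,R_1\approx3,\,R_2\approx2,\,R_3\approx1$, and the condition at $n=1$, namely $\tfrac12 h_1b_1\ge\sum_{k\ge2}2^{-k}h_kb_k$, reads $0.25\ge \tfrac14\tfrac{1}{\sqrt3}+\tfrac18\tfrac{1}{\sqrt2}+\tfrac1{16}+\cdots\approx 0.295$, which is false, so $a_2<a_1$. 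Your proposed repair $h_n:=\max_{k\le n}\{c/\sqrt{R_{k-1}},\,1/(a_kb_k)\}$ is circular: each $a_k$ is determined by the entire tail $(h_j)_{j\ge k}$, in particular by $h_n$ itself and by all later values, so this is not a definition; making it rigorous would require a fixed-point or limiting argument that you do not give, and after any such modification the remaining properties (normalizability, $h_nb_n\to0$, summability) would have to be re-verified.

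Second, the summability step is wrong as stated, and not only because the bound is lossy. The estimate $a_{n+1}b_n\lesssim (b_n/b_{n+1})\sqrt{R_n}$ does not reduce to $\sum_n b_n/\sqrt{R_{n-1}}<\infty$: with $b_{2m-1}=2^{-m}$, $b_{2m}=2^{-m}4^{-m}$ the terms $(b_{2m-1}/b_{2m})\sqrt{R_{2m-1}}\gtrsim 2^{m/2}$ are unbounded, since $b_n/b_{n+1}$ is not controlled. Moreover the defect is intrinsic to the Abel--Dini choice: for $b_n=2^{-2^n}$ one has $R_n\approx 2^{-2^{n+1}}$, hence $h_nb_n=b_n/\sqrt{R_{n-1}}\approx 2^{-2^{n-1}}$ is increasing-compatible, $(a_n)$ is non-decreasing and tends to infinity, yet $1/a_{n+1}\approx\tfrac12 b_{n+1}/\sqrt{R_n}\approx\tfrac12 2^{-2^n}$, so $a_{n+1}b_n\approx 2$ for every $n$ and $\sum_n a_{n+1}b_n=\infty$. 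So $h$ must be adapted to $b$ rather than prescribed by a universal formula, and your scheme does not do this. For comparison, the paper avoids both issues by exploiting that $a$ is sought non-decreasing, so it suffices to enforce the one-step condition $a_n/a_{n-1}\le (b_{n-1}+b_n)/(2b_n)$; it first fixes any non-decreasing $\bar a_n\to\infty$ with $\sum_n\bar a_{n+1}b_n<\infty$ and sets $a_{n+1}=\min\{\tfrac{b_{n+1}+b_n}{2b_{n+1}}a_n,\ \bar a_{n+1}\}$, proving $a_n\to\infty$ via $\tfrac{b_{n-1}+b_n}{2b_n}\ge\sqrt{b_{n-1}/b_n}$ and telescoping; this forward recursion has no dependence on future terms, which is precisely what makes your approach circular.
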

\begin{proof}
As we are looking for a non-decreasing sequence $\{a_n\}_{n\ge 1}$, instead of \eqref{eq: extra const} it is sufficient to require 
\begin{align}\label{eq:reformulation-constraint-lemma-moc}
\frac{b_{n-1}}{a_n} \geq \left(\frac{2}{a_{n-1}} - \frac{1}{a_{n}}\right) b_{n}, \qquad \text{or} \qquad
\frac{a_n}{a_{n-1}} \leq \frac{b_n + b_{n-1}}{2b_n}\qquad \forall n\ge 2. 
\end{align}
Let $\{\bar{a}_n\}_{n\ge 1}$ be a non-decreasing sequence (not necessarily satisfying the extra constraint \eqref{eq:reformulation-constraint-lemma-moc}) such that $\bar{a}_n \to \infty$ as $n\to \infty$, and $\sum_{n\ge 1} \bar{a}_{n+1} b_n<\infty$. For example, define indices $1 = n_0 = n_1 < n_2 < \dots$ such that $\sum_{n\ge n_{k}} b_n < 2^{-k}$ for all $k \geq 1$ and then choose $\bar{a}_n = k$ for all $n_{k-1}+1 \leq n \le n_k$ and all $k\ge 1$. Having chosen $\bar{a}_{n}$, we recursively define the required sequence as follows: 
\begin{align*}
a_{1}:= 1,\qquad a_{n+1} := \min\left\{\frac{b_{n+1} + b_{n}}{2b_{n+1}} a_{n}, \bar{a}_{n+1}\right\} \quad \forall n\ge 1.
\end{align*}
Clearly, $\{a_n\}_{n\ge 1}$ satisfies the constraint \eqref{eq:reformulation-constraint-lemma-moc}. In addition, since $b_{n}$ is non-increasing and $\bar a_{n}$ is non-decreasing, we deduce that $a_{n}$ is non-decreasing as well. The only property that is left to be checked is $a_n \to \infty$ as $n \to \infty$. Let $N = \sup\{n \in \mathbb{N} \, | \, a_n > \bar{a}_n/2\}$. If $N = \infty$, then there is nothing to prove. Otherwise, 
\begin{align*}
\frac{a_n}{a_{n-1}} = \frac{b_n + b_{n-1}}{2b_n}  \quad \forall n > N \quad \implies \quad \frac{a_{k+N}}{a_N} = \prod_{n = N+1}^{N+k} \frac{b_n + b_{n-1}}{2b_n} \geq \prod_{n = N+1}^{N+k} \left(\frac{b_{n-1}}{b_n}\right)^{\frac{1}{2}} = \left(\frac{b_N}{b_{N+k}}\right)^{\frac{1}{2}}.
\end{align*}
Since $b_{N+k} \to 0$ as $k \to \infty$, this completes the proof.
\end{proof}

We are now in the position to prove the following
\begin{lem}\label{lem:auxiliary-moc}
    Let $\omega:[0,\infty)\to [0,\infty)$ be a non-Osgood modulus of continuity. There exists another non-Osgood modulus of continuity $\tilde{\omega}:[0,\infty)\to [0,\infty)$ such that $\tilde{\omega}\le \omega$, and
    \begin{equation*}
    \frac{\omega(r)}{\tilde{\omega}(r)}\to \infty\qquad \text{as $r\to 0^{+}$}.
\end{equation*}
\end{lem}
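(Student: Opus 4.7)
The plan is to reduce to the discrete setting of Lemma~\ref{lem: series} via a dyadic construction, and then realize $\tilde\omega$ as a piecewise linear function matching prescribed values at dyadic scales.

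First I would discretize the Osgood integral. Set $w_n := \omega(2^{-n})$ and $b_n := 2^{-n}/w_n$ for $n \ge 1$. The concavity of $\omega$ together with $\omega(0) = 0$ gives $w_{n-1} \le 2 w_n$, hence $b_n \le b_{n-1}$, so $\{b_n\}$ is a positive non-increasing sequence. By the monotonicity of $1/\omega$, a standard dyadic comparison identifies $\int_0^1 dr/\omega(r)$ with $\sum_n b_n$ up to multiplicative constants, so the non-Osgood hypothesis translates into $\sum_n b_n < \infty$. This is exactly the input required for Lemma~\ref{lem: series}, which produces a non-decreasing sequence $\{a_n\}$ with $a_1 = 1$, $a_n \to \infty$, $\sum_n a_{n+1} b_n < \infty$, and satisfying the extra inequality \eqref{eq: extra const}.

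Next I would define the candidate modulus by dyadic interpolation: set $\tilde\omega(2^{-n}) := w_n/a_n$, extend by linear interpolation on each $[2^{-n}, 2^{-n+1}]$, put $\tilde\omega(0) := 0$, and continue above $r = 1$ by a tangent line at $r=1$ to preserve concavity globally. The easier properties follow directly: continuity, $\tilde\omega(0) = 0$, and strict monotonicity from the behaviour of $w_n$ and $a_n$; the bound $\tilde\omega \le \omega$ at the dyadic nodes because $a_n \ge 1$, which transfers to intermediate $r$ since $\tilde\omega$ is linear while $\omega$ is concave with the same endpoint constraints. The required divergence $\omega(r)/\tilde\omega(r) \to \infty$ as $r \to 0^+$ holds at dyadic points, where the ratio equals $a_n \to \infty$, and extends to intermediate $r$ by a sandwich argument using the monotonicity of $\omega$ and $\tilde\omega$. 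Finally, the non-Osgood property for $\tilde\omega$ amounts, via another dyadic comparison, to the estimate $\int_0^1 dr/\tilde\omega(r) \asymp \sum_n 2^{-n}/\tilde\omega(2^{-n}) = \sum_n a_n b_n \le \sum_n a_{n+1} b_n < \infty$, which is precisely the output of Lemma~\ref{lem: series}.

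I expect the main obstacle to be the concavity of $\tilde\omega$. A piecewise linear function with nodes at $\{2^{-n}\}$ is concave if and only if its slopes on successive intervals are non-increasing as one moves rightwards, which amounts to a three-term inequality between $\tilde\omega(2^{-n+1}), \tilde\omega(2^{-n})$, and $\tilde\omega(2^{-n-1})$. Writing this out in terms of $w_n, a_n$ and substituting $b_n = 2^{-n}/w_n$ recasts the concavity requirement into an inequality relating $(a_{n-1}, a_n, a_{n+1})$ and $(b_{n-1}, b_n)$, which is of the same algebraic shape as \eqref{eq: extra const}. That seemingly technical constraint in Lemma~\ref{lem: series} was introduced precisely so that this dyadic interpolant is concave, and it is the only point in the argument where the quantitative strengthening over mere summability of $\sum a_{n+1} b_n$ is genuinely used. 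If the direct interpolation does not make the algebra match on the nose, a small modification (for instance, adjusting the values at the nodes by bounded factors, or shifting the nodes to $c\cdot 2^{-n}$ for some $c$) should bring the concavity condition into exact alignment with \eqref{eq: extra const}.
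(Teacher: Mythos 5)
Your blueprint (discretize, invoke Lemma~\ref{lem: series}, interpolate piecewise linearly) is the paper's, but the one step you yourself identify as the crux does not close, because you discretize at dyadic points of the \emph{argument} rather than at points where $\omega$ halves its \emph{value}. With nodes $2^{-n}$ and values $w_n/a_n$, concavity of the interpolant reads $2\bigl(\tfrac{w_n}{a_n}-\tfrac{w_{n+1}}{a_{n+1}}\bigr)\ge \tfrac{w_{n-1}}{a_{n-1}}-\tfrac{w_n}{a_n}$, i.e.\ after substituting $b_n=2^{-n}/w_n$,
\begin{equation*}
\frac{2}{a_n b_n}-\frac{1}{a_{n+1}b_{n+1}}\;\ge\;\frac{2}{a_{n-1}b_{n-1}}-\frac{1}{a_n b_n},
\end{equation*}
in which each $b_j$ is coupled to its own $a_j$. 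This is \emph{not} of the same algebraic shape as \eqref{eq: extra const}, where a single factor $b_{n-1}$ (resp.\ $b_n$) multiplies the whole bracket, and it genuinely does not follow from the conclusions of Lemma~\ref{lem: series}. Concretely, take $\omega(r)=\sqrt r$, so $b_n=2^{-n/2}$; your condition becomes $\tfrac{3}{a_n}\ge \sqrt2\bigl(\tfrac{1}{a_{n-1}}+\tfrac{1}{a_{n+1}}\bigr)$. A sequence with $a_n=\rho\,a_{n-1}$, $\rho=\tfrac{1+\sqrt2}{2}$, followed by a plateau $a_{n+1}=a_n$ satisfies monotonicity, summability and \eqref{eq: extra const} (the latter with equality), yet $\sqrt2(\rho+1)\approx 3.12>3$, so your concavity fails; and "maximal growth then plateau" is exactly the pattern produced by the proof of Lemma~\ref{lem: series} whenever $a_n$ hits the cap $\bar a_n$. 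The suggested rescues (adjusting nodal values by bounded factors, or moving nodes to $c\cdot 2^{-n}$) cannot repair this, since the obstruction is structural: for concavity to become \eqref{eq: extra const} the gaps between consecutive nodes must be proportional to $b_n$, which forces the nodes to be the points $r_n$ with $\omega(r_n)=2^{-n+1}\omega(1)$ — precisely the paper's choice, for which the slope on $[r_{n+1},r_n]$ equals $\omega(1)\bigl(\tfrac{2}{a_n}-\tfrac{1}{a_{n+1}}\bigr)/b_n$ and concavity is literally \eqref{eq: extra const}.

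Two smaller points. First, your extension above $r=1$ by the tangent (i.e.\ the last secant) line goes the wrong way: concavity of $\omega$ gives $\omega(r)\le\omega(1)+\omega'(1^+)(r-1)\le$ your linear extension, so $\tilde\omega\le\omega$ fails on $(1,\infty)$; the harmless fix is to set $\tilde\omega=\omega$ there and check the slope matching at $r=1$, as in the paper. Second, the remaining verifications in your sketch (summability of $\sum a_nb_n$ via $b_n$ non-increasing and the dyadic comparison, $\tilde\omega\le\omega$ on $(0,1)$ by comparing the interpolant with the chord of $\omega$, and the divergence of $\omega/\tilde\omega$ via $w_{n-1}\le 2w_n$) are fine; only the choice of nodes, and hence the concavity step, needs to be redone along the paper's lines.
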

\begin{proof}
    We call $\Omega:[0,\infty)\to [0,\infty)$ the strictly increasing continuous function defined by
    \begin{equation*}
        \Omega(r):= \int_{0}^{r}\frac{1}{\omega(t)}dt\qquad \forall r\ge 0.
    \end{equation*}
    Note that $\Omega$ is well-defined thanks to the non-Osgood condition on $\omega$, and $\Omega(0)=0$.
    Let $\{r_{n}\}_{n\ge 1}\subset (0,1]$ be the decreasing infinitesimal sequence uniquely identified by the relation
    \begin{equation}\label{eq:def-rn-lemma-moc}
        \omega(r_{n})= 2^{-n+1}\omega(1)\qquad \forall n\ge 1.
    \end{equation}
    The concavity of $\omega$ in the form of monotonicity of difference quotients together with \eqref{eq:def-rn-lemma-moc} imply that the sequence $b_{n}:=2^{n}(r_{n}-r_{n+1})$, $n\ge 1$, is non-increasing. We may then apply  \Cref{lem: series} and find a non-decreasing sequence of positive numbers $\{a_{n}\}_{n\ge 1}$ such that $a_{1}=1$, $a_n \to \infty$ as $n \to \infty$, and
    \begin{gather}
        \sum_{n\ge 1} 2^n (r_n - r_{n+1}) a_{n+1} < \infty, \label{eq:cond-1-lemma-moc}\\
        \left(\frac{2}{a_n} - \frac{1}{a_{n+1}}\right) (r_{n-1}-r_{n}) \geq 2\left(\frac{2}{a_{n-1}} - \frac{1}{a_{n}}\right) (r_{n}-r_{n+1}) \qquad \forall n\ge 2.\label{eq:cond-2-lemma-moc}
    \end{gather}
    The modulus of continuity $\tilde{\omega}$ is then defined as the following piecewise affine function:
    \begin{equation*}
        \tilde{\omega}(t):= \begin{cases}
            \omega(t)\qquad &\forall t\ge 1,\\
            \frac{r_{n}-t}{r_{n}-r_{n+1}}\frac{\omega(r_{n+1})}{a_{n+1}}+\frac{t-r_{n+1}}{r_{n}-r_{n+1}}\frac{\omega(r_{n})}{a_{n}}\qquad &\forall t\in [r_{n+1},r_{n}),\quad \forall n\ge 0,\\
            0\qquad & t=0
        \end{cases}
    \end{equation*}
    $\tilde{\omega}$ is continuous and strictly increasing in $[0,\infty)$. Let us prove that it is non-Osgood. Since $\tilde{\omega}(t)\ge \tilde{\omega}(r_{n+1})=\omega(r_{n+1})/a_{n+1}=\omega(1)/2^{n}a_{n+1}$ in $[r_{n+1},r_{n})$, by \eqref{eq:cond-1-lemma-moc} we have
    \begin{align*}
        \int_{0}^{1}\frac{1}{\tilde{\omega}(t)}dt= \sum_{n\ge 1}\int_{r_{n+1}}^{r_{n}}\frac{1}{\tilde{\omega}(t)}dt\le \frac{1}{\omega(1)}\sum_{n\ge 1}2^{n}(r_{n}-r_{n+1})a_{n+1}<\infty.
    \end{align*}
    Finally, to prove that $\tilde{\omega}$ is concave it is sufficient to show that the slopes of the affine functions defining $\tilde{\omega}$ in $[r_{n+1},r_{n})$ are non-decreasing in $n$:
    \begin{equation*}
        \frac{\tilde{\omega}(r_{n})-\tilde{\omega}(r_{n+1})}{r_{n}-r_{n+1}}\ge \frac{\tilde{\omega}(r_{n-1})-\tilde{\omega}(r_{n})}{r_{n-1}-r_{n}}\qquad \forall n\ge 2,
    \end{equation*}
    which corresponds exactly to condition \eqref{eq:cond-2-lemma-moc}, after substituting $\tilde{\omega}(r_{n})= \omega(1)/2^{n-1}a_{n}$ for all $n\ge 1$. 
\end{proof}

\section{Non-uniqueness of trajectories in a set of full measure}\label{sec:proof weak}
Before going to the proof of \Cref{thm:main} in \Cref{sec: proof main thm}, we wish to prove here the following simpler result: 
\begin{prop} \label{prop: nonunique traj}
    Let $d\ge 2$ and $\omega:[0,\infty)\to [0,\infty)$ be any non-Osgood modulus of continuity. For every $T>0$, there exists a divergence-free velocity field $v \in C([0, T]; C^\omega(\mathbb{R}^d))$ such that $\supp v(t, \cdot) \subseteq Q$ for all $t\in [0,T]$, and for which the \eqref{eq:ODE} admits at least two distinct trajectories for almost every starting point in $Q$.
\end{prop}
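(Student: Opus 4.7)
The plan is to construct $v$ as a superposition of divergence-free translations acting simultaneously at all Cantor generations, which is the parallelization scheme advertised in the introduction. Using \Cref{lem:auxiliary-moc} I first pass to an auxiliary non-Osgood modulus $\tilde\omega\le\omega$ with $\omega(r)/\tilde\omega(r)\to\infty$ as $r\to 0^+$; I will build $v$ with $\sup_t[v(t,\cdot)]_{C^{\tilde\omega}}<\infty$ and $v(T,\cdot)\equiv 0$, which by the modulus gap upgrades to the required continuity $v\in C([0,T];C^\omega(\R^d))$, by splitting the $C^\omega$ seminorm into a small-scale part controlled by $\tilde\omega/\omega\to0$ and a large-scale part controlled by $\|v(t,\cdot)\|_{L^\infty}\to 0$. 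Next, I choose a sequence $\eta=\{\eta_n\}$ with $\sum_n\eta_n=\infty$, so that $\mathscr L^d(\mathscr C^\eta)=0$, and such that the scale-by-scale time budgets $\tau_n:=\ell_{n-1}^\eta/\tilde\omega(\ell_{n-1}^\eta)$ satisfy $\sum_n\tau_n\le T$; the coexistence of these two conditions is possible by a dyadic comparison with $\int_0^1 dt/\tilde\omega(t)<\infty$, which is exactly the non-Osgood hypothesis on $\tilde\omega$.

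The elementary building blocks are defined scale by scale: for each $n\ge 1$ and each $\sigma'\in\mathfrak S^{n-1}$, \Cref{lem-Building-block} rescaled to size $\ell_{n-1}^\eta$ and recentered at $p_{n-1,\sigma'}^\eta$ yields a divergence-free $w_n^{\sigma'}$ supported in the generation-$(n-1)$ Cantor cube indexed by $\sigma'$ that rigidly translates the $2^d$ generation-$n$ sub-cubes through a cyclic schedule that reaches every possible target during $[0,\tau_n]$. Setting $w_n := \sum_{\sigma'\in\mathfrak S^{n-1}}w_n^{\sigma'}$, the rescaling gives $\|w_n\|_{L^\infty}\lesssim\tilde\omega(\ell_{n-1}^\eta)$ and $\|\nabla w_n\|_{L^\infty}\lesssim\tilde\omega(\ell_{n-1}^\eta)/\ell_{n-1}^\eta$. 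The parallelized velocity field is then
\[
v(t,x) := \sum_{n\ge 1}\chi_n(t)\, w_n(x),\qquad t\in[0,T],\ x\in\R^d,
\]
with smooth temporal profiles $\chi_n$ vanishing at $t=T$ and with \emph{overlapping} supports across $n$ --- this overlap being the crucial parallelization, in contrast to constructions where the $\chi_n$'s have disjoint supports. Because the $w_n$'s have nested spatial supports, at each $x$ only one scale is locally dominant, so \Cref{lem:interpolation} applied scale by scale gives $\sup_t[v(t,\cdot)]_{C^{\tilde\omega}}\lesssim 1$.

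The main obstacle is establishing non-uniqueness of ODE trajectories for a.e. $\bar x\in Q$. Each such $\bar x$ is associated via the map $S^\eta$ of \eqref{eq: S eta} to a dyadic symbol sequence through the Cantor tree, and a canonical trajectory $\gamma_1$ of \eqref{eq:ODE} follows the translations dictated by $v$ through that sequence. The parallelization is what enables a second trajectory $\gamma_2$: because the scale-$n$ cyclic schedule visits every generation-$n$ sub-cube at some intermediate time within $\{\chi_n>0\}$, and because finer scales are simultaneously active, I can perturb $\gamma_1$ at a single switching instant so that $\gamma_2$ enters a \emph{different} generation-$n$ sub-cube from $\gamma_1$, while the finer-scale motions still keep $\gamma_2$ absolutely continuous and solving the ODE. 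The hardest step is to translate this heuristic into a rigorous piecewise construction of $\gamma_2$ and to show that the set of $\bar x$ for which the construction succeeds has full $\mathscr L^d$-measure in $Q$; the non-Osgood bound on $\tilde\omega$ is precisely what makes this Peano-type branching consistent with the velocity regularity of $v$.
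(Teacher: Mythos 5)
There is a genuine gap, and it sits precisely at the step you defer as ``the hardest step.'' Your plan is to exhibit the second trajectory $\gamma_2$ by perturbing $\gamma_1$ ``at a single switching instant'' so that it enters a different sub-cube. But for the field you build, at any fixed time the velocity is a locally finite sum of smooth building blocks, so it is locally Lipschitz in a neighbourhood of any point at any positive time at which you would like to branch; the Cauchy--Lipschitz theorem then forbids any such branching. Non-uniqueness can only be created at the single time where infinitely many scales accumulate, and even there the existence of a \emph{second} solution from almost every starting point is not something one can read off by following the schedule of cube translations --- it requires a genuinely different mechanism. The paper's route is: first build a field $b$ (this is \Cref{prop: nonunique traj rev}) whose trajectories are \emph{unique} and whose flow maps the null Cantor set $\mathscr{C}^\eta$ onto all of $Q$ via $S^\eta$ in \eqref{eq: S eta}; then set $v(t,x)=-b(T-t,x)$. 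One family of solutions is the explicit inverse flow $Y(t,x)=X(T-t,S^{-1}(x))$, which ends in the null set $\mathscr{C}^\eta$; a second, distinct solution for a.e.\ starting point is obtained \emph{non-constructively}: since $v$ is divergence-free and supported in $Q$, $\mathscr{L}^{d}\res Q$ is a stationary solution of \eqref{eq:PDE}, and Ambrosio's superposition principle shows that if trajectories were unique on a set $E$ of positive measure, the stationary measure would be pushed at time $T$ into $\mathscr{C}^\eta$, contradicting $\mathscr{L}^{d}(\mathscr{C}^\eta)=0$. Your proposal contains neither this time-reversal step nor any substitute for the superposition/incompressibility argument, so the full-measure non-uniqueness claim is unproven.

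A secondary but real problem is the form of your velocity field $v(t,x)=\sum_n \chi_n(t)\,w_n(x)$ with \emph{time-independent} spatial profiles $w_n$. A building block from \Cref{lem-Building-block} equals the constant vector only on the inner half of its support, so a fixed-in-space profile cannot rigidly translate a sub-cube over a distance comparable to (or larger than) its own size, let alone ``reach every possible target.'' In the paper's construction \eqref{eq:def-b-trajectories} the blocks are recentred at the moving centres $c_{n,\sigma}(t)$ and rescaled by the growing radii $r_n(t)$; this co-motion and expansion of the (fictitious) cubes is exactly what keeps the nested structure intact while the parallel translation happens, and it is also what the regularity estimates \eqref{eq:uniform-bound-bt}--\eqref{eq:Lip-bound-bt} are computed for. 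Your regularity bookkeeping (auxiliary modulus $\tilde\omega$, interpolation, time budgets summable by the non-Osgood condition) matches the paper in spirit, but as written it is attached to a field that does not perform the advertised translations and to a non-uniqueness argument that cannot work at Lipschitz times.
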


The proof of \Cref{prop: nonunique traj} is based on a time reversal argument applied to a velocity field $b$ whose associated unique flow map $X$ takes a Cantor set onto the entire cube $Q$ in finite time. As such, we begin by fixing the Cantor set used in the construction of $b$. We choose the sequence $\eta=\{\eta_n\}_{n\ge 1}\in(0,\infty)^{\N}$ defined by $\eta_n=1$ for all $n\ge1$. From equation $\eqref{eq: nu from eta}$, this choice yields $\nu_n^\eta = n$. Let $\mathscr{C}^\eta$ denote the corresponding Cantor set defined in $\eqref{eq: cantor set}$. With this choice we have in particular $\mathscr{L}^d(\mathscr{C}^\eta)=0$.
The following proposition outlines the  properties of the vector field $b$:
\begin{prop} \label{prop: nonunique traj rev}
Let $d\ge 2$ and $\omega:[0,\infty)\to [0,\infty)$ be any non-Osgood modulus of continuity. For every $T > 0$, there exists a divergence-free velocity field $b \in C([0, T]; C^\omega(\mathbb{R}^d))$ such that $\supp b(t, \cdot) \subseteq Q$ for all $t\in [0,T]$, it admits unique trajectories, and the flow map $X:[0, T] \times \mathbb{R}^d \to \mathbb{R}^d$ has the property that $X(T, x) = S^\eta(x)$ for all $x \in \mathscr{C}^\eta$, where the $S^\eta$ is defined in \eqref{eq: S eta}. 
\end{prop}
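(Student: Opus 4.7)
The plan is to construct $b$ as a parallelized superposition of Cantor-scale building blocks, in the spirit of \cite{K2023} but with multiple nested scales acting simultaneously at each time. As preparation, I apply \Cref{lem:auxiliary-moc} to obtain a non-Osgood modulus $\tilde\omega\le\omega$ with $\omega(r)/\tilde\omega(r)\to\infty$ as $r\to 0^+$, and aim to produce $b\in C([0,T];C^{\tilde\omega}(\R^d))$; since $\tilde\omega\le\omega$ this automatically gives the desired $C^\omega$-continuity in time, and the slack $\omega/\tilde\omega\to\infty$ absorbs any fixed multiplicative constants that will arise in the construction.

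For each $n\ge 1$ and each $\sigma\in\mathfrak{S}^n$, I use \Cref{lem-Building-block} to place a rescaled smooth divergence-free block supported inside the dyadic-$n$ sub-cube of side $2^{-n}$ centered at $s_{n,\sigma}$, designed so that a time-dependent copy rigidly translates the Cantor-$n$ cube of side $\ell^\eta_n = 4^{-n}$ from its initial position at $p^\eta_{n,\sigma}$ toward $s_{n,\sigma}$, thereby producing an incremental displacement $\sigma_n(2^{-n-1}-4^{-n})$ at scale $n$. Because scale-$n$ blocks occupy pairwise-disjoint dyadic-$n$ cubes nested inside the scale-$(n-1)$ containers, the flow of the (divergence-free) sum $b$ telescopes on the Cantor set: for $x\in\mathscr{C}^\eta$ with coding $\{\sigma_j(x)\}_{j\ge 1}$, the cumulative displacement at time $T$ equals $\sum_{j\ge 1}\sigma_j(x)(2^{-j-1}-4^{-j}) = S^\eta(x)-x$, giving $X(T,x)=S^\eta(x)$ as required.

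For the regularity, I allocate to scale $n$ an effective active duration $\tau_n\asymp 2^{-n}/\tilde\omega(2^{-n})$; the non-Osgood condition on $\tilde\omega$ yields $\sum_n\tau_n<\infty$, so a suitable overlapping schedule fits all scales into $[0,T]$. During its active window, a scale-$n$ block satisfies $\|b^{(n)}\|_{L^\infty}\lesssim\tilde\omega(2^{-n})$ and $\|\nabla b^{(n)}\|_{L^\infty}\lesssim\tilde\omega(2^{-n})/2^{-n}$, so by \Cref{lem:interpolation} it lies in $C^{\tilde\omega}$ with unit-order norm. Summing over the simultaneously-active scales via a split-at-$N$ estimate (with $2^{-N}\sim|x-y|$), and using that at any fixed point only one block per scale is nonzero and that the supports are nested with strictly shrinking size, yields $[b(t,\cdot)]_{C^{\tilde\omega}}\lesssim 1$ uniformly in $t$.

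The hard part is the parallel scheduling itself: a purely sequential schedule (only one scale active at any given time) caps out at roughly $\omega(r)\gtrsim r|\log r|^2$, as discussed after \cite{elgindi2024norm}, so one genuinely needs simultaneous overlap of multiple scales to reach all non-Osgood moduli, yet the overlap must be tight enough for the $C^{\tilde\omega}$ estimate above to survive summation in $n$. A second delicate point is uniqueness of trajectories despite the sub-Osgood spatial regularity: away from $\mathscr{C}^\eta$ only finitely many scales contribute near a given $x$, making $b$ locally Lipschitz and giving classical uniqueness there, while at $x\in\mathscr{C}^\eta$ the nested coding $\{\sigma_j(x)\}_j$ furnishes an explicit candidate trajectory that is forced to be the unique solution by the rigid translation action of each building block inside its private container.
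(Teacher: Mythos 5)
Your high-level picture (auxiliary modulus $\tilde\omega$ from \Cref{lem:auxiliary-moc}, building blocks from \Cref{lem-Building-block}, telescoping of the per-scale displacements $\sigma_n(2^{-n-1}-4^{-n})$, interpolation via \Cref{lem:interpolation}) matches the paper, but the core of the argument — the quantitative parallel mechanism — is missing, and the bookkeeping you do supply is the serial one that the paper's own heuristic (\Cref{subsec: overview const traj}) shows cannot work for general non-Osgood moduli. First, a geometric inconsistency: you support the scale-$n$ block in the \emph{fixed} dyadic cube of side $2^{-n}$ centered at $s_{n,\sigma}$, but for $n\ge 3$ the Cantor-$n$ cube at its \emph{initial} position is in general not contained in that dyadic cube (e.g.\ for the all-$(+1)$ address the Cantor cubes accumulate at the point with coordinates $1/3$ while the dyadic cubes accumulate at $1/2$), so a block sitting at the target cannot act on the cube it is supposed to carry; the supports must travel with the cubes, and in the paper they moreover \emph{expand} ($c_{n,\sigma}(t)+r_n(t)Q$ with $r_n(t)$ growing). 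Second, and more seriously: you let the scale-$n$ block alone impart the relative displacement $\approx 2^{-n-1}$ at amplitude $\tilde\omega(2^{-n})$ during a window of length $\asymp 2^{-n}/\tilde\omega(2^{-n})$. When that block starts, adjacent generation-$n$ siblings are only $\approx 4^{-n}$ apart; giving them relative velocity $\tilde\omega(2^{-n})$ makes the seminorm at least of order $\tilde\omega(2^{-n})/\tilde\omega(4^{-n})$, which is unbounded in $n$ (of order $2^{n}$ up to logarithmic factors for $\tilde\omega(r)=r|\log r|^{1+\eps}$). If instead you ramp the speed so as to respect the modulus, the constraint $\dot r\lesssim\tilde\omega(r)$ forces the scale-$n$ window to last at least $\tilde\Omega(2^{-n})-\tilde\Omega(4^{-n})$, and $\sum_n\bigl(\tilde\Omega(2^{-n})-\tilde\Omega(4^{-n})\bigr)$ diverges beyond $\omega(r)=r|\log r|^2$ — exactly the obstruction the proposition is meant to overcome. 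Your time budget $\sum_n 2^{-n}/\tilde\omega(2^{-n})<\infty$ is the time needed at the \emph{final} speed, not the actual cost.

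What the paper's proof actually does, and what your proposal never supplies, is the slaving mechanism: the separations $r_n(t)$ are prescribed as in \eqref{def: rn traj} so that while generation $m+1$ runs at full amplitude, every finer generation $n>m+1$ is simultaneously dragged with amplitude $\dot r_n=4^{m+1-n}\dot r_{m+1}$ inside moving, expanding containers that keep the ratio $1/4$ between consecutive generations (see \eqref{eq:comparison-rk-rn+1-trajectories} and \eqref{eq:def-b-trajectories}). Hence generation $n$'s separation has already grown from $2^{-2n-1}$ to $2^{-n-2}$ before its own window, whose cost is only $\tilde\Omega(2^{-n-1})-\tilde\Omega(2^{-n-2})$, so the total time telescopes to $\tilde\Omega(1/4)<\infty$ for \emph{every} non-Osgood modulus; the geometric decay $4^{m+1-n}$ of the slaved amplitudes and the disjointness of the gradient supports are also precisely what make $\lVert b_t\rVert_{L^\infty}\lesssim\tilde\omega(r_{m+1}(t))$ and $\lVert\nabla b_t\rVert_{L^\infty}\lesssim\tilde\omega(r_{m+1}(t))/r_{m+1}(t)$ finite even though infinitely many scales act at once. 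This last point also breaks your uniqueness argument: with your per-scale amplitudes, at a time when infinitely many scales are active the field is not spatially Lipschitz (the constant would be $\sup_n\tilde\omega(2^{-n})/2^{-n}=\infty$ in general), and "forced by the rigid translation action" is not a substitute for a proof; in the paper uniqueness is simply Cauchy--Lipschitz on $[0,T)$, because the slaved design keeps $\lVert\nabla b_t\rVert_{L^\infty}$ finite for every $t<T$, followed by a continuous extension to $t=T$ using boundedness of $b$.
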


\begin{proof}[Proof of \Cref{prop: nonunique traj}]
With \Cref{prop: nonunique traj rev} in hand, the proof of \Cref{prop: nonunique traj} is straightforward. For brevity, we drop $\eta$ from the superscript in the notation of the mapping $S^\eta$ and the Cantor set $\mathscr{C}^\eta$. We define $v$ as
    \begin{equation*}
        v(t,x):=-b(T-t, x)\qquad \forall (t,x)\in [0,T]\times \R^{d}.
    \end{equation*}
    Taking into account the properties of $b$, one thing that is remained to be shown is that $v$ has non-unique trajectories on a full measure set of initial conditions in $Q$. On the one hand, by the essential invertibility of the target map $S$, for almost every $x\in \R^{d}$, we can define the inverse flow map
    \begin{equation*}
        Y(t,x):= X(T-t, S^{-1}(x))\qquad \forall t\in [0,T],
    \end{equation*}
    such that $Y(\cdot ,x)$ is a solution of \eqref{eq:ODE} with vector field $v$ and initial condition $x$. On the other hand, by the incompressibility of $v$ and the fact that $v_{t}$ vanishes in $\R^{d}\setminus Q$ for every $t\in [0,T]$, the measure $\mu_{t}:=\mathscr{L}^{d}\res Q$ is a stationary solution of \eqref{eq:PDE}. Therefore, by the superposition principle \cite[Theorem 3.4]{ambrosio2014continuity}, there must exist a family of probability measures $\{\zeta_{x}\}_{x\in \R^{d}}\subset \mathscr{P}(C([0,T];\R^{d}))$ such that $\zeta_{x}$ is concentrated on the set of solutions of \eqref{eq:ODE} with starting point $x$ for almost every $x\in \R^{d}$, and 
    \begin{equation*}
        \mathscr{L}^{d}\res Q=(e_{t})_{\#}\zeta_{x}\otimes d\mathscr{L}^{d}\res Q (x)\qquad \forall t\in [0,T],
    \end{equation*}
    where $e_{t}$ is the evaluation map at time $t$.  Suppose by contradiction that there exists a set $E\subset Q$ with $\mathscr{L}^{d}(E)>0$ such that trajectories of $v$ are unique for every starting point in $E$. Then, for almost every $x\in E$ we would have $\zeta_{x}=\delta_{Y(\cdot, x)}$, therefore,
    \begin{equation*}
        0=\mathscr{L}^{d}(\mathscr{C})\ge \mathscr{L}^{d}(S^{-1}(E))= (e_{T})_{\#}\zeta_{x}\otimes d\mathscr{L}^{d}\res Q (S^{-1}(E))\ge \mathscr{L}^{d}(E)>0,
    \end{equation*}
    a contradiction. 
\end{proof}
The rest of this section is devoted to the proof of \Cref{prop: nonunique traj rev}, which will be carried out in \Cref{subsec:proof-trajectories}, after presenting an overview in \Cref{subsec: overview const traj}.
\subsection{Overview of the construction} \label{subsec: overview const traj}

The construction of the vector field $b$ of \Cref{prop: nonunique traj rev} uses \cite{K2023} as a foundation, which established non-uniqueness of flow maps for divergence-free Sobolev vector fields. 
The idea is to use rescaled copies of the building blocks from \Cref{lem-Building-block} to translate the Cantor cube $p_{n,\sigma}^{\eta}+\ell_{n}^{\eta}Q$ to the final position $s_{n,\sigma}+\ell_{n}^{\eta}Q$, for every $n\ge 1$ and $\sigma \in \mathfrak{S}^{n}$ (recall the notations $\ell_{n}^{\eta},p_{n,\sigma}^{\eta}$ and $s_{n,\sigma}$ introduced in \Cref{subsec:Cantor}).

In \cite{K2023}, the velocity field acts on Cantor cubes of different generations in a serial manner: at any given time, only a single generation of cubes is translated using the building block vector field. While this construction is sufficient for the purposes in \cite{K2023}, a purely sequential approach is inadequate for the objectives of the present work as the following heuristic explanation illustrates. Suppose we wish to use a serial approach for the modulus of continuity $\omega(r)=r|\log(r)|^{1+\eps}$ for some $\eps>0$. Say the $n$th generation is moved in the time interval $\Delta_{n}\subset [0,\infty)$, where $\{\Delta_{n}\}_{n\ge 1}$ have pairwise disjoint interiors.  Call $r_{n}(t)$ the distance of the centers of two adjacent Cantor cubes of the $n$th generation at time $t$. $r_{n}$ must increase from $\ell_{n-1}^{\eta}/2=2^{-2n-1}$ to $2^{-n}$ in time $\mathscr{L}^{1}(\Delta_{n})$. In order to maintain the semi-norm $[b(t,\cdot)]_{C^{\omega}(\R^{d})}$ uniformly bounded in time, we need to impose $\dot{r}_{n}(t)\lesssim \omega(r_{n}(t))=r_{n}(t)|\log(r_{n}(t))|^{1+\eps}$. Integrating this differential inequality in the time interval $\Delta_{n}$ we get
\begin{equation*}
    \mathscr{L}^{1}(\Delta_{n})\gtrsim \int_{2^{-2n-1}}^{2^{-n}}\frac{1}{r|\log(r)|^{1+\eps}}dr\approx \frac{1}{n^{\eps}}.
\end{equation*}
Now, the total time needed for a serial construction is the sum of all $\mathscr{L}^{1}(\Delta_{n})$, which diverges for $\eps\le 1$. Consequently, the best modulus of continuity achievable by such a construction is $\omega(r) = r |\log r|^2$. This coincides with the result of \cite{elgindi2024norm}, which likewise based on a serial construction. 

\begin{figure}[H]
\centering
 \includegraphics[scale = 0.5]{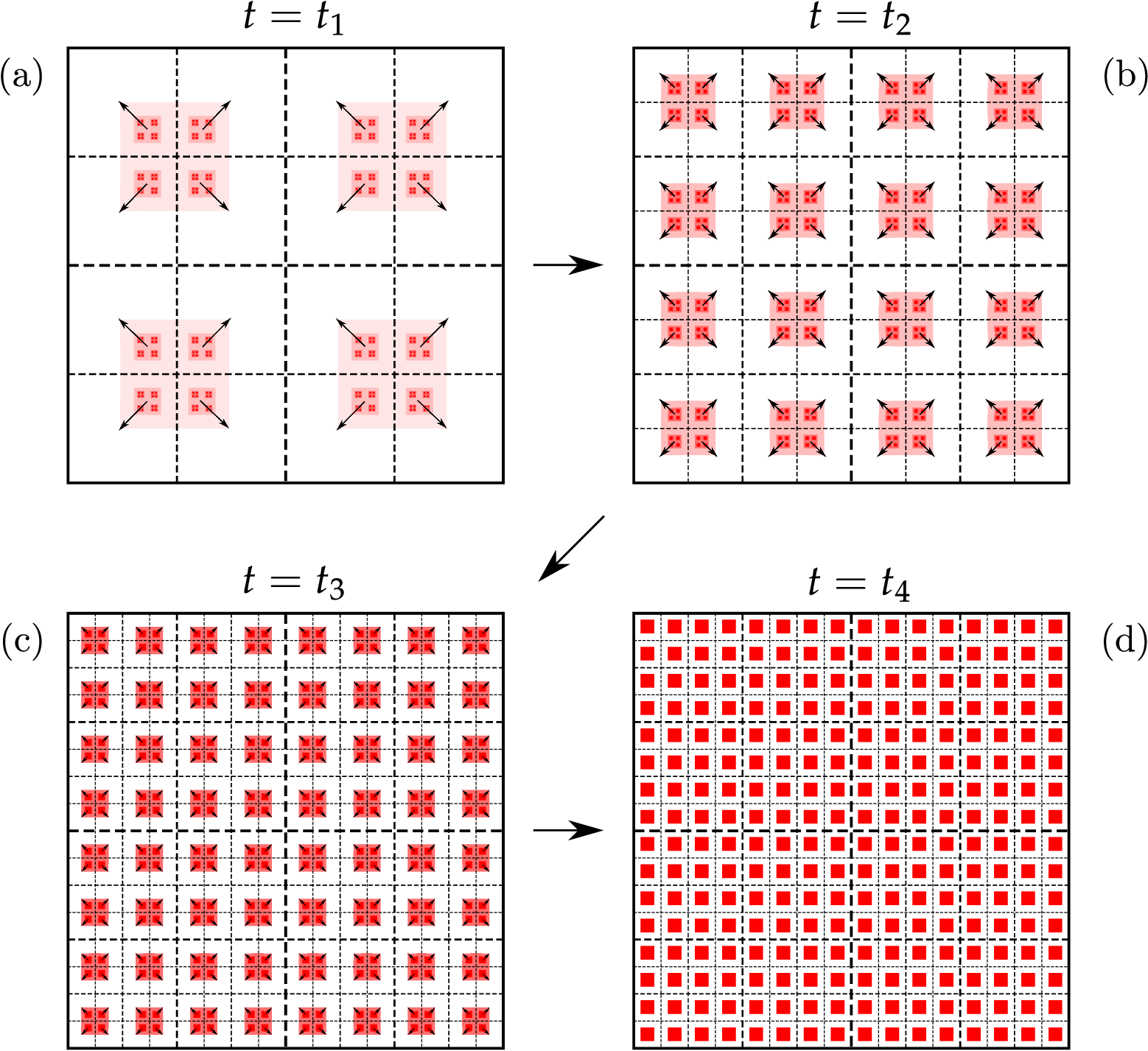}
 \caption{ illustrates the positions and sizes of Cantor cubes of different generations as they are translated in space by the velocity field $b$. The picture resembles Figure 2 in \cite{K2023}. However, there is one crucial difference. In our construction, Cantor cubes from all generations move simultaneously, rather than sequentially. As a result, the size of a Cantor cube at a given generation must expand in time to accommodate the increasing separation of the smaller cubes nested within it. This is clearly visible in the figure. Throughout the construction, the ratio of sizes between successive generations is maintained at $1/4$. When the $n$th generation cubes complete their translation at time $t = t_n$, their size becomes one half of the size of the dyadic cube of the same generation.}
 \label{fig: time evolution ode}
\end{figure}

To surpass this restriction, in this paper we introduce the idea of parallelization, illustrated in \Cref{fig: time evolution ode} and \Cref{fig: parallel}. Unlike the serial approach, the velocity field designed here acts simultaneously on multiple generations of Cantor cubes. More precisely, while the cubes of the $n$th generation are being spread to their designated positions, the cubes of the $(n+1)$th generation, nested within the $n$th generation cubes, are translated at the same time. This idea continues recursively: as the 
$(n+1)$th generation cubes are spread, so do the $(n+2)$th generation cubes contained within them and so on. This multiscale structure of the vector field is crucial for proving Theorem \ref{thm:main} and does not appear in earlier explicit constructions concerning mixing, anomalous dissipation or examples of non-uniqueness in the transport or scalar diffusion equations.

 \begin{figure}[h]
\centering
 \includegraphics[scale = 0.35]{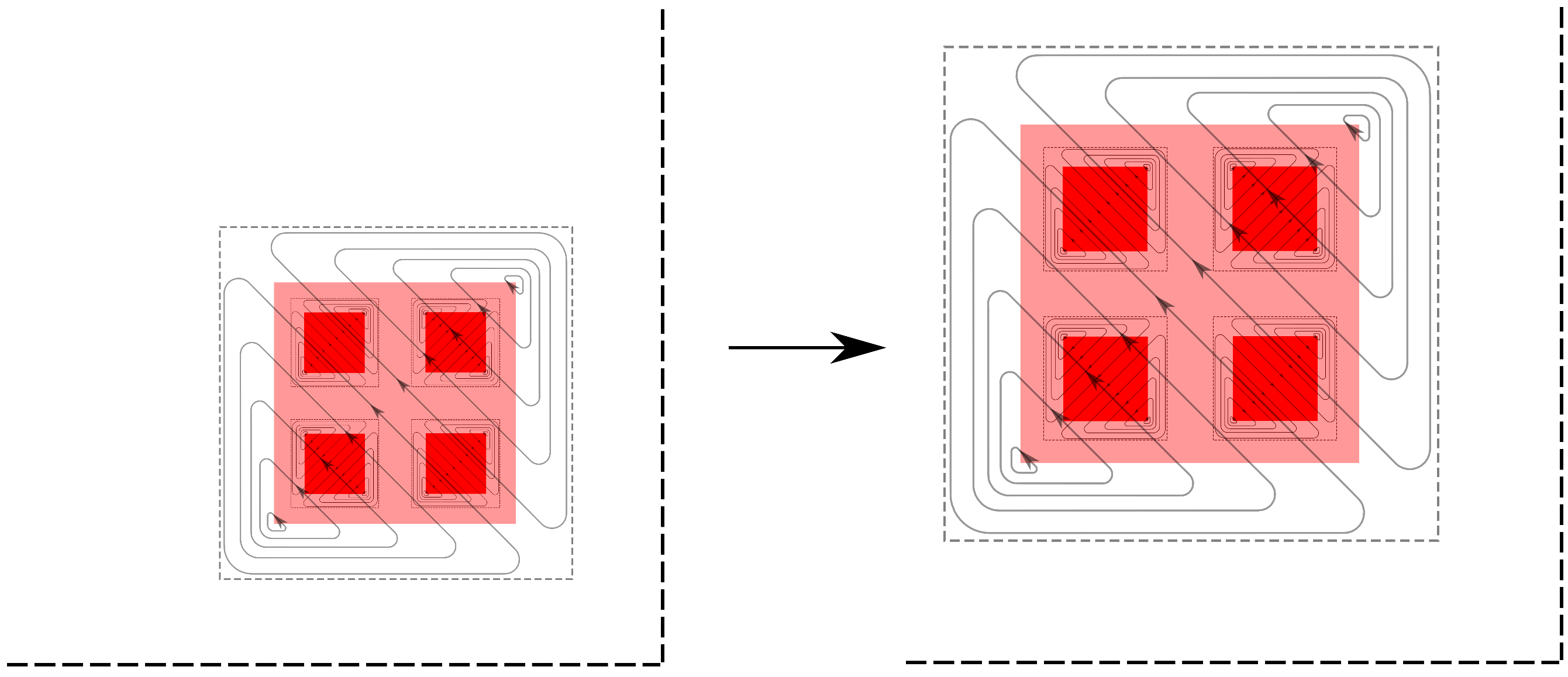}
 \caption{ depicts the superposition of building block velocity fields that translate Cantor cubes of two successive generations. For clarity, only two levels of superposition are shown. In the actual construction, this superposition extends across infinitely many generations. The figure emphasize two essential properties of the construction. (i) Even under superposition, each inner Cantor cube is subject only to a uniform velocity field and therefore translates rigidly. (ii) As the separation between inner cubes grows during the evolution, the size of the enclosing cube must increase to maintain the nested structure of the Cantor set. Consequently, the outer cube expands as it translates.
 }
 \label{fig: parallel}
\end{figure}

We work with a sequence of increasing times $\{t_1, t_2, \dots \}$ (see \eqref{eq: time series} for the exact definition) converging to a final time $T$. The Cantor cubes of the $n$th generation are translated over time $[0, t_n]$. Despite the fact that the vector field we construct is multiscale, all Cantor cubes in the construction undergo rigid translations, just as in \cite{K2023}. This is because a Cantor cube of the $n$th generation only sees the uniform portion of the building blocks carrying the Cantor cubes of generation $n$ and lower. Of course, the uniform component is zero if a lower generation has finished translating. One important aspect of our construction is that, as the 
$(n+1)$th generation cubes spread out, the $n$th generation cubes must simultaneously expand in size to maintain the nested structure of the Cantor cubes. Indeed, In the construction provided in the proof of \Cref{prop: nonunique traj rev}, we ensure that, for all times $t \in [0, t_n]$, the size of $(n+1)$th generation cubes is exactly $1/4$ of that of the $n$th generation cubes.

Although our construction parallelizes the motion of infinitely many generations of Cantor cubes, this choice is made primarily due to mathematical convenience of writing the proof. We could very well have parallelized the motion of cubes in finite batches. For instance, the first batch could involve parallel motion of cubes with indices from $1 = N^k_1$ to $N^k_2$, the second batch from  $N^k_2 + 1$ to $N^k_3$ and so on, for a suitably fast-growing sequence $\{N^k_1, N^k_2, \cdots\}$. In fact, this viewpoint will be used in the construction presented in the next section.

\subsection{Proof of \Cref{prop: nonunique traj rev}}\label{subsec:proof-trajectories}

First of all, up to a suitable time reparametrization, we may reduce to prove the result for some $T>0$. 
We divide the proof into three steps. In the first step, we outline the construction of the vector field $b$. In the second step, we verify that $b$ satisfies the required properties, namely, support in $Q$, divergence-free nature and the prescribed modulus of continuity. Finally, in the third step, we show that the associated flow map $X$ has the desired mapping property.

    \smallskip
    \noindent \textbf{Step 1: Construction of $\bm b$.} First of all we consider an auxiliary non-Osgood modulus of continuity $\tilde{\omega}\le \omega$ such that $\omega(r)/\tilde{\omega}(r)\to \infty$ as $r\to 0^{+}$, as given by \Cref{lem:auxiliary-moc}. We introduce the function $\tilde\Omega:[0,\infty)\to [0,\infty)$ defined as
    \begin{equation*}
        \tilde\Omega(r):= \int_{0}^{r}\frac{1}{\tilde\omega(s)}ds\qquad \forall r\in [0,\infty).
    \end{equation*}
    Note that $\tilde\Omega$ is well defined thanks to the fact that $\tilde\omega$ does not satisfy the Osgood condition \eqref{eq:Osgood-condition}. Moreover, $\tilde\Omega\in C^{1}_{\loc}((0,\infty))$ is strictly increasing and $\tilde\Omega(0)=0$. We define the strictly increasing sequence of times 
    \begin{equation} \label{eq: time series}
        t_{1}:=0,\qquad t_{n+1}:=t_{n}+ \tilde\Omega(2^{-n-1})-\tilde\Omega(2^{-n-2})\quad \forall n\ge 1,\qquad T:= \overunderset{\infty}{n=1}{\sum}(t_{n+1}-t_{n})= \tilde\Omega(1/4)<+\infty.
    \end{equation}
As mentioned in the overview in \Cref{subsec: overview const traj}, our construction is such that
\begin{enumerate}[label=(\alph*)]
\item  on the time interval $[0, t_n]$, the Cantor cubes of the $n$th generation translate and reach their final destination where their centers align with the centers of same generation dyadic cubes. 
\item it preserves the nested structure of Cantor cubes while translating, i.e., if two Cantor cubes of order $n<n'$ are such that the second is included in the first at $t = 0$, then the same containment holds for all times $t\in [0,t_{n}]$.
\end{enumerate}

We now prescribe how the distance between two adjacent Cantor cubes at the $n$th generation, denoted by $r_n(t)$, evolves in time. In our construction, the side length of each $n$th generation Cantor cube is half the distance between two adjacent cubes of the same generation, i.e., $r_n(t)/2$. We will adopt this convention throughout without further explicit mention. Let $\chi:\mathbb{R} \to [0, 1]$ be a non-decreasing $C^\infty$ function such that $\chi \equiv 0$ for $x \leq 0$, $\chi \equiv 1$ for $x \geq 1$, and $|\dot \chi| \leq 2$. We define
\begin{align}
\chi_{n+1}(t) = (t_{n+1} - t_n) \chi \left(\frac{t - t_n}{t_{n+1} - t_n}\right)\qquad \forall n\ge 1.
\end{align}
It is clear that $\chi_{n+1}(t_n) = 0$, $\chi_{n+1}(t_{n+1}) = t_{n+1} - t_n$, and $|\dot \chi_{n+1}|\le 2\mathbbm{1}_{(t_{n},t_{n+1})}$. 
We  recursively define
\begin{equation}\label{def: rn traj}
    \begin{gathered}
    r_{1}(t):= 2^{-1}\qquad  \forall t\in [0,T], \\[5pt]
       r_{n+1}(t):= \begin{cases}
            2^{-n-1}\quad &\forall t\in [t_{n+1},T],\\
            \tilde\Omega^{-1}\left(\tilde\Omega(2^{-n-2})+\chi_{n+1}(t)\right)\quad &\forall t\in [t_{n},t_{n+1}),\\
            r_{n}(t)/4\quad &\forall t\in [0,t_{n})
        \end{cases}\qquad \forall n\ge 1.
\end{gathered}    
\end{equation}
Instead of $\chi_{n+1}$, we could have made a linear choice, i.e., $\chi_{n+1}(t)$ replaced with $t - t_n$ in the definition \eqref{def: rn traj}. This choice would lead to a velocity field $v \in L^\infty_t C^\omega_x$. However, with the introduction of the smooth function $\chi_{n+1}$, one can readily check that $[0,T]\ni t\mapsto r_{n}(t)$ is a $C^{1}$ function for all $n\ge 1$, which will later allow us to make the velocity field continuous in time. With the definition \eqref{def: rn traj}, the following formulas hold:
    \begin{equation}\label{eq:comparison-rk-rn+1-trajectories}
        r_{k}(t)= 4^{n+1-k}r_{n+1}(t),\quad \dot r_{k}(t)= 4^{n+1-k} \dot\chi_{n+1}(t)\tilde\omega(r_{n+1}(t))\qquad \forall t\in (t_{n},t_{n+1}),\quad \forall k\ge n+1\ge 2.
    \end{equation}

    Next, for every $n\ge 1$ and every $\sigma \in \mathfrak{S}^{n}$, we denote by $c_{n,\sigma}(t)$ the position at time $t$ of the center of the $n$th Cantor generation cube corresponding to the symbol $\sigma$, and we prescribe their motion:
    \begin{equation}\label{eq: centers c}
        c_{n,\sigma}(t):= \sum_{k=1}^{n}\frac{r_{k}(t)}{2}\sigma_{k}\qquad \forall t\in [0,T],\quad \forall n\ge 1,\quad \forall \sigma \in \mathfrak{S}^{n}.
    \end{equation}
  
    In  \Cref{subsec:Cantor}, we introduced $p^{\eta}_{n,\sigma}, s_{n,\sigma}$ to denote centers of initial Cantor cubes and dyadic cubes respectively. Note that
    \begin{equation}\label{eq:endpoints-centers-trajectories}
        c_{n,\sigma}(0)=p^{\eta}_{n,\sigma},\qquad c_{n,\sigma}(t)= s_{n,\sigma}=S^{\eta}(p^{\eta}_{n,\sigma})\quad \forall t\in [t_{n},T],\qquad \forall n\ge 1,\quad \forall \sigma\in \mathfrak{S}^{n}.
    \end{equation}
    Using the fact that $r_{n+1}(t)\le r_{n}(t)/2$ for every $t\in [0,T]$ and $n\ge 1$, one can check that the following holds:
    \begin{equation}\label{eq:distance-centers-different-generations}
        c_{n,\sigma}(t)\not\in c_{k,\tilde{\sigma}}(t)+\frac{3}{4}r_{k}(t)Q\qquad \forall t\in [0,T],\quad \forall k>n\ge 1,\quad \forall \sigma\in \mathfrak{S}^{n},\, \tilde{\sigma}\in \mathfrak{S}^{k}.
    \end{equation}
    The construction also ensures that, at any given time, Cantor cubes of the same generation (with double the side length) remain disjoint. That is, we have 
    \begin{equation}\label{eq:disjoint-supports-same-generation}
        \inter  \left(c_{n,\sigma}(t)+r_{n}(t)Q\right)\cap \inter\left(c_{n,\tilde\sigma}(t)+r_{n}(t)Q\right)=\emptyset\qquad \forall t\in [0,T],\quad \forall n\ge 1,\quad \forall \sigma, \tilde\sigma \in \mathfrak{S}^{n}, \,\sigma \neq \tilde{\sigma}.
    \end{equation}
    Moreover, up to time $t_n$, the Cantor cubes of the $(n+1)$th generation have side length equal to $1/4$ of the side length of Cantor cubes of the $n$th generation. As such, the Cantor cubes of $n$th generation contain $2^{d}$ cubes of (double the size) of $(n+1)$th generation:
\begin{equation}\label{eq:containment-generations}
        c_{n+1,\sigma}(t)+r_{n+1}(t)Q\subset c_{n,\sigma'}(t)+\frac{r_{n}(t)}{2}Q\qquad \forall t\in [0,t_{n}),\quad \forall n\ge 1,\quad \forall \sigma\in \mathfrak{S}^{n+1}.
    \end{equation}
    Recall here the notation $\sigma':=(\sigma_{1},\dots,\sigma_{n})$ for every $\sigma=(\sigma_{1},\dots,\sigma_{n},\sigma_{n+1})\in\mathfrak{S}^{n+1}$, $n\ge 1$.
    
    Finally, recalling the notation $u^{e}$ for the building blocks introduced in \Cref{lem-Building-block}, we are in the position to define the velocity field $b$:
\begin{align}\label{eq:def-b-trajectories}
    b(t,x):= \overunderset{\infty}{n=1} {\sum} \; \underset{\sigma \in \mathfrak{S}^{n}}{\sum} \frac{\dot{r}_{n}(t)}{2}\,u^{\sigma_{n}}\left(\frac{x-c_{n,\sigma}(t)}{r_{n}(t)}\right)\qquad \forall (t,x)\in [0,T]\times \R^{d}.
\end{align}

\smallskip
\noindent \textbf{Step 2: Properties of the vector field.}  
We first prove that $b_{t}$ is well-defined for all $t\in [0,T)$, and that $b\in C([0,T);C^{\omega}(\R^{d}))$. 
Let us fix $m\ge 1$ and $t\in [t_{m},t_{m+1})$. We show that the series defining $b_t$ in \eqref{eq:def-b-trajectories} is pointwise absolutely convergent, which shows that $b_{t}$ is well-defined. Note that $\dot r_{n}(t)=0$ for all $n\le m$. By \Cref{lem-Building-block}, $u^{\sigma_{n}}((\cdot-c_{n,\sigma}(t))/r_{n}(t))$ appearing in \eqref{eq:def-b-trajectories} is zero outside $\inter\left(c_{n,\sigma}(t)+r_{n}(t)Q\right)$. Then, thanks to \eqref{eq:disjoint-supports-same-generation}, for every $x\in \R^{d}$ and $n\ge m+1$ there is at most one $\sigma\in \mathfrak{S}^{n}$ for which $u^{\sigma_{n}}((x-c_{n,\sigma}(t))/r_{n}(t))\neq 0$. Therefore, the sum defining $b(t,x)$ in \eqref{eq:def-b-trajectories} is absolutely convergent and we have
\begin{equation}\label{eq:uniform-bound-bt}
    \lVert b_{t}\rVert_{L^{\infty}(\R^{d})}\lesssim_{d} \sum_{n\ge m+1}\dot r_{n}(t)=\sum_{n\ge m+1}4^{m+1-n} \dot\chi_{m+1}(t) \tilde\omega(r_{m+1}(t))\lesssim_{d}\tilde\omega(r_{m+1}(t))\qquad \forall t\in [t_{m},t_{m+1}),
\end{equation}
where we also used \eqref{eq:comparison-rk-rn+1-trajectories}. Next we show that the series of space derivatives is absolutely convergent as well. Notice that $\nabla u^{\sigma_{n}}((\cdot-c_{n,\sigma}(t))/r_{n}(t))$ is zero outside $\inter\left(c_{n,\sigma}(t)+r_{n}(t)Q\right)\setminus (c_{n,\sigma}(t)+(r_{n}(t)/2)Q)$. This, together with \eqref{eq:disjoint-supports-same-generation} and \eqref{eq:containment-generations} shows that for all $x\in \R^{d}$, there is at most one choice of $n\ge m+1$ and $\sigma \in \mathfrak{S}^{n}$ for which $\nabla u^{\sigma_{n}}((x-c_{n,\sigma}(t))/r_{n}(t))\neq 0$. Therefore, using \eqref{eq:comparison-rk-rn+1-trajectories} we find
\begin{equation}\label{eq:Lip-bound-bt}
    \lVert \nabla b_{t}\rVert_{L^{\infty}(\R^{d})}\lesssim_{d} \sup_{n\ge m+1} \frac{\dot r_{n}(t)}{r_{n}(t)}\lesssim  \frac{\tilde\omega(r_{m+1}(t))}{r_{m+1}(t)}\qquad \forall t\in [t_{m},t_{m+1}).
\end{equation}
By \Cref{lem:interpolation}, \eqref{eq:uniform-bound-bt} and \eqref{eq:Lip-bound-bt} together give that $[0,T)\ni t\mapsto b_{t}$ is uniformly bounded in $C^{\tilde{\omega}}(\R^{d})$, and so in $C^{\omega}(\R^{d})$, as $\tilde{\omega}\le \omega$. The continuity in time is because of continuity of $r_n(t)$, $c_{n, \sigma}(t)$ and $\dot{r}_n(t)$ combined with the regularity of the building blocks $u^{e}$.

We now show continuity up to time $t=T$. Note that $b_{T}\equiv 0$ because $\dot{r}_{n}(T)=0$ for all $n\ge 1$. Let us prove that $\lVert b_{t}\rVert_{C^{\omega}(\R^{d})}\to 0$ as $t\to T^{-}$. By \eqref{eq:uniform-bound-bt} and $\tilde{\omega}(r_{m+1})\le \tilde{\omega}(2^{-m-1})\to 0$ as $m\to \infty$ we get $\lVert b_{t}\rVert_{L^{\infty}(\R^{d})}\to 0$ as $t\to T^{-}$. To prove $[b_{t}]_{C^{\omega}(\R^{d})}\to 0$ as $t\to T^{-}$ we multiply and divide by $\omega(r_{n+1}(t))$ in both \eqref{eq:uniform-bound-bt} and \eqref{eq:Lip-bound-bt} and then use \Cref{lem:interpolation} to find 
\begin{equation*}
    \sup_{t\in [t_{n},t_{n+1})}[ b_{t}]_{C^{\omega}(\R^{d})}\lesssim_{d} \sup_{t\in [t_{n},t_{n+1})}\frac{\tilde\omega(r_{n+1}(t))}{\omega(r_{n+1}(t))}\to 0\qquad \text{as $n\to \infty$}
\end{equation*}
thanks to the choice of $\tilde{\omega}$, which is infinitesimal with respect to $\omega$ at zero.  

Finally, $b_{t}\equiv 0$ in $\R^{d}\setminus Q$ for all $t\in [0,T]$ because all the terms appearing in the sum \eqref{eq:def-b-trajectories} are supported in $Q$. Similarly, $\diver b_{t}\equiv 0$ for all $t\in [0,T]$ because of the incompressibility of the building blocks $u^{e}$ from \Cref{lem-Building-block}.

\smallskip
\noindent \textbf{Step 3: Properties of the flow map.} By standard Cauchy-Lipschitz theory, the bounds in \eqref{eq:uniform-bound-bt} and \eqref{eq:Lip-bound-bt} ensure the existence and uniqueness of a continuous flow map $X:[0,T)\times \R^{d}\to \R^{d}$. Furthermore, since $b$ is uniformly bounded in $[0,T]\times \R^{d}$, $X$ is uniquely extended continuously to the whole $[0,T]\times \R^{d}$.  

We prove that $X(T, x) = S^{\eta}(x)$ for all $x \in \mathscr{C}^{\eta}$. By the continuity of the flow map, it is sufficient to show that
\begin{equation}\label{eq:goal-mapping-centers}
    X(T, p^{\eta}_{n,\sigma}) = s_{n,\sigma}\qquad \forall n\ge 1,\quad \forall \sigma\in \mathfrak{S}^{n},
\end{equation}
which basically means the center for Cantor cubes of different generation are mapped to the center of dyadic cubes of the same generation. This is because any $x \in \mathscr{C}^{\eta}$ can be arbitrarily well-approximated by centers of Cantor cubes. Note that by \eqref{eq:endpoints-centers-trajectories} and the uniqueness of trajectories, \eqref{eq:goal-mapping-centers} is a consequence of 
\begin{equation}\label{eq:trajectories-centers}
    \dot c_{n,\sigma}(t)=b(t,c_{n,\sigma}(t))\qquad \forall t\in (0,T),\quad \forall n\ge 1,\quad \forall \sigma\in \mathfrak{S}^{n}.
\end{equation}

In the rest of the proof we show \eqref{eq:trajectories-centers}. Let us fix $n\ge 1$ and $\sigma\in \mathfrak{S}^{n}$, and consider $t\in [t_{m-1},t_{m})$ for some $m\ge 2$.  
Note that $\dot r_{k}(t)=0$ for all $k< m$. Moreover, by \eqref{eq:distance-centers-different-generations} and \Cref{lem-Building-block} we get $u^{\tilde\sigma_{k}}((c_{n,\sigma}(t)-c_{k,\tilde{\sigma}}(t))/r_{k,\tilde{\sigma}}(t))=0$ for all $k>n$ and all $\tilde{\sigma}\in \mathfrak{S}^{k}$. 
This two considerations together give
\begin{equation*}
    b(t,c_{n,\sigma}(t))=\sum_{k=m}^{n}\sum_{\tilde{\sigma}\in \mathfrak{S}^{k}}\frac{\dot{r}_{k}(t)}{2}\,u^{\tilde\sigma_{k}}\left(\frac{c_{n,\sigma}(t)-c_{k,\tilde\sigma}(t)}{r_{k}(t)}\right).
\end{equation*}
In particular, if $m\ge n+1$ we get $b(t,c_{n,\sigma}(t))=0=\dot c_{n,\sigma}(t)$. Suppose now that $m\le n$. Let us call $\sigma^{k}\in \mathfrak{S}^{k}$ the unique symbol such that $\sigma^{k}\subset \sigma$, for all $k\le n$. From \eqref{eq:containment-generations} we deduce 
\begin{align}
c_{n, \sigma}(t) \in c_{n, \sigma}(t) + \frac{r_n(t)}{2} Q \subset c_{n-1, \sigma^{n-1}}(t)+ \frac{r_{n-1}(t)}{2} Q\subset\dots \subset  c_{m, \sigma^m}(t)+  \frac{r_m(t)}{2} Q. 
\end{align}
In particular, by \eqref{eq:disjoint-supports-same-generation}, $c_{n,\sigma}(t)\not\in c_{k,\tilde{\sigma}}(t)+\frac{3}{4}r_{k}(t)Q$ for all $k\in \{m,\dots, n\}$ and all $\tilde{\sigma}\in \mathfrak{S}^{k}$, $\tilde{\sigma}\neq \sigma^{k}$. Therefore, from \Cref{lem-Building-block} we obtain
\begin{align} \label{eq: b part 1}
b(t, c_{n, \sigma}(t)) = \overunderset{n}{k=m}\sum \frac{\dot r_{k}(t)}{2}u^{\sigma^{k}_{k}}\left(\frac{c_{n,\sigma}(t)-c_{k,\sigma^{k}}(t)}{r_{k}(t)}\right)= \overunderset{n}{k=m}\sum \frac{\dot r_{k}(t)}{2}\sigma_{k}=\dot c_{n,\sigma}(t),
\end{align}
as desired. This concludes the proof of \Cref{prop: nonunique traj rev}.

\section{Non-uniqueness of transported densities} \label{sec: proof main thm}
The goal of this section is to present a construction based on a fixed-point iteration combined with the ideas developed in the previous section, and prove the following proposition. The main theorem (Theorem \ref{thm:main}) is then a consequence of this proposition and Ambrosio's superposition principle.
\begin{prop}\label{prop:non-uniqeness-L1}
    Let $d\ge 2$ and $\omega:[0,\infty)\to [0,\infty)$ be any non-Osgood modulus of continuity. Given any $T > 0$, there exists a divergence-free velocity field $v \in C([0, T]; C^\omega(\mathbb{R}^d))$, such that $\supp v(t, \cdot) \subseteq Q$ for every $t\in [0,T]$,
    for which there exists a non-negative solution $\mu_{t}$ of \eqref{eq:PDE} with $\supp \mu_{t}\subseteq Q$ for all $t\in [0,T]$, $\mu_{t}\ll \mathscr{L}^{d}$ for almost every $t\in [0,T]$, and such that $\mu_{0}\equiv \mathbbm{1}_{Q}\mathscr{L}^{d}$ and $\mu_{T}\not\equiv \mathbbm{1}_{Q}\mathscr{L}^{d}$.
\end{prop}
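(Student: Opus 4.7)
The plan combines the parallelization mechanism from \Cref{prop: nonunique traj rev} with a fixed-point framework inspired by \cite{BCK2024}. I would construct the velocity field $v$ and the anomalous solution $\mu$ simultaneously by a self-similar iterative scheme.

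First, I would partition $[0, T]$ into countably many sub-intervals $[\tau_{k-1}, \tau_k]$, $k \ge 1$, with $\tau_k \nearrow T$, and choose batch sizes $N_k$. On the $k$-th batch, the vector field is built by placing a rescaled copy of a base ``finite-generation'' construction (analogous to that of \Cref{prop: nonunique traj rev} but using only $N_k$ parallelized Cantor generations) inside each of the currently active cubes produced by the previous batches. This nested rescaling encodes a self-similar structure, viewed as a fixed-point equation $(v, \mu) = \mathcal{F}(v, \mu)$, where $\mathcal{F}$ ``peels off one outer batch'' via a suitable space-time rescaling; the inner construction on a single cube mimics the truncated Section~\ref{sec:proof weak} scheme, which remains Lipschitz in space on each batch (only finitely many generations are simultaneously active there).

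Second, the measure $\mu_t$ is constructed via Ambrosio's superposition principle: for $\mathscr{L}^d$-a.e. $x \in Q$, I would choose a probability measure $\eta_x$ on trajectories starting at $x$, and set $\mu_t = \int_Q (e_t)_\# \eta_x \, d\mathscr{L}^d(x)$. The weights in $\eta_x$ are chosen, compatibly with the fixed-point structure, so that (i) at each fixed time $t < T$ the resulting $\mu_t$ is absolutely continuous with piecewise constant density on a finite union of cubes, hence $\mu_t \ll \mathscr{L}^d$ for all $t \in [0, T)$, a fortiori for a.e. $t \in [0, T]$; and (ii) in the limit $t \to T$ the active cubes shrink to a Cantor-like null set, so $\mu_T$ concentrates non-trivially and in particular $\mu_T \neq \mathbbm{1}_Q \mathscr{L}^d$. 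The verification of the structural properties --- $v \in C([0, T]; C^\omega(\R^d))$, divergence-free, supported in $Q$ --- would then follow the strategy of Step 2 of the proof of \Cref{prop: nonunique traj rev}: incompressibility and compact support come from \Cref{lem-Building-block}, while the modulus-of-continuity bound uses the auxiliary $\tilde \omega \le \omega$ from \Cref{lem:auxiliary-moc} together with the interpolation \Cref{lem:interpolation}, applied scale by scale to the self-similar sum of batches.

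The main obstacle will be reconciling parallelization with the $L^1$ a.e.~absolute continuity of $\mu_t$: parallelization is essential to reach the full non-Osgood range of moduli of continuity (as the serial heuristic in \Cref{subsec: overview const traj} shows that a non-parallelized construction cannot go past $\omega(r) = r|\log r|^2$), while the fixed-point framework of \cite{BCK2024} was designed for a serial construction and does not directly accommodate infinitely many simultaneously-active scales. Finessing this tension requires a careful joint tuning of the sequences $\{\tau_k\}$, $\{N_k\}$, the nested rescaling ratios, and the superposition weights so that the three constraints --- $C^\omega$ regularity of $v$, absolute continuity of $\mu_t$ for a.e. $t$, and non-triviality of $\mu_T$ --- are met simultaneously. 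Particular care will be needed at the transitions $t = \tau_k$, where the modulus-of-continuity estimates coming from distinct scales must match; this is precisely the new technical challenge flagged in the introduction as absent from \cite{BCK2024}.
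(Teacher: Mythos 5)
In your plan every batch uses only \emph{finitely} many parallelized generations, the batches occupy consecutive intervals $[\tau_{k-1},\tau_k]$ with $\tau_k\nearrow T$, and all concentration is supposed to happen in the limit $t\to T$. But a truncated (finite-generation) version of the Section~\ref{sec:proof weak} field is spatially Lipschitz on its whole batch, and a finite disjoint union of rescaled copies of it is still Lipschitz; hence your field is Lipschitz in space, uniformly on $[0,\tau_k]$ for every $k$. By Cauchy--Lipschitz and the uniqueness theory for \eqref{eq:PDE} under \eqref{eq:Osgood-integral-time}, the Cauchy problem with datum $\mathbbm{1}_Q\mathscr{L}^d$ is then uniquely solvable on $[0,T)$, and the unique solution is the push-forward of $\mathbbm{1}_Q\mathscr{L}^d$ by a measure-preserving flow of $Q$, i.e.\ the stationary solution $\mathbbm{1}_Q\mathscr{L}^d$; weak-$*$ continuity forces $\mu_T=\mathbbm{1}_Q\mathscr{L}^d$ as well. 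In particular your points (i) and (ii) are mutually inconsistent: where trajectories are unique, the superposition weights $\eta_x$ give no freedom (each $\eta_x$ must be the Dirac mass on the unique trajectory), a density bounded by $1$ for all $t<T$ cannot concentrate onto a Cantor-like null set in the weak-$*$ limit, and more generally anomalous behaviour located only at the single terminal time cannot produce a second solution. This is exactly the reason the preliminary construction of \Cref{prop: nonunique traj} (infinite parallelization, singular time at one endpoint) does \emph{not} yield an $L^1$-a.e.\ solution, and why ``finite batches accumulating at $T$'' does not repair it.

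The paper resolves this differently: it does not stack batches serially towards $T$, but uses the fixed point to embed a time-reversed, time-compressed copy of the \emph{entire infinite} construction (the deeper fields $\bar B^{k+n-1}$ and densities $\bar\Theta^{k+n-1}$) inside every ``breaking'' subinterval, so the non-Lipschitz times form a dense null set spread through $[0,T]$; the density then concentrates progressively (forward in time for $\mu$), is in $L^1$ at a.e.\ time by the recursion \eqref{eq:recursion-L1-times}, and $\mu_T$ is in fact still absolutely continuous --- uniform on the $2^d$ cubes of side $\ell_1$ --- which is all the proposition requires. Two further points where your plan diverges in a substantive way: first, a single fixed-point equation $(v,\mu)=\mathcal{F}(v,\mu)$ with exact self-similar rescaling forces a constant ratio $\ell_{k+1}/\ell_k$, whereas closing the contraction with the weight $W$ from \eqref{def: weight} for an arbitrary non-Osgood $\omega$ requires ratios that vary (indeed degenerate) with $k$, cf.\ \Cref{lem:choice-parameters}; this is why the paper iterates on a whole family $\{\bar B^k\},\{\bar\Theta^k\}$ indexed by the scale (\Cref{prop:properties-iteration-map-field}, \Cref{prop:properties-iteration-map-measure}). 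Second, the measure is not obtained by choosing superposition weights over trajectories of $v$, but as the fixed point of its own iteration map in $\Lip([0,1];H^{-s})$, verified to solve \eqref{eq:PDE} directly (\Cref{lem: Thetak solve continuity} and Step~1 of \Cref{subsec:proof-nonuniqueness-L1}); superposition is only used afterwards, to deduce ODE non-uniqueness from PDE non-uniqueness.
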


\begin{proof}[Proof of \Cref{thm:main}]
    Let $v$ and $\mu$ be the outcome velocity field and solution from \Cref{prop:non-uniqeness-L1}. We set $$\mu^{1}_{t}:=\mu_{t},\qquad \mu^{2}_{t}:= \mathbbm{1}_{Q}\mathscr{L}^{d}\qquad \forall t\in [0,T].$$
    Both $\mu^{1},\mu^{2}\in C_{w^*}([0,T];\mathcal{M}_{+}(\R^{d}))$ are solutions of \eqref{eq:PDE} with divergence-free velocity field $v\in C([0, T]; C^\omega(\mathbb{R}^d))$ and initial datum $\mathbbm{1}_{Q}\mathscr{L}^{d}$. Moreover, they are distinct, as $\mu^{1}_{T}=\mu_{T}\neq \mathbbm{1}_{Q}\mathscr{L}^{d}=\mu^{2}_{T}$. They are supported in $Q$ and are absolutely continuous with respect to Lebesgue for almost every time. Finally, if $v$ had a unique trajectory for almost every starting point in $Q$, then Ambrosio's superposition principle \cite[Theorem 3.4]{ambrosio2014continuity} would imply that $\mu^{1}\equiv \mu^{2}$, as $\mu^{1}_{0}=\mu^{2}_{0}=\mathbbm{1}_{Q}\mathscr{L}^{d}$, which is not the case. This shows that $v$ has non-unique trajectories in a set of positive measure and concludes the proof.
\end{proof}

The rest of this section is devoted to the proof of \Cref{prop:non-uniqeness-L1}. After giving an overview of the construction in \Cref{sec: overview L1}, we introduce several auxiliary notations in \Cref{subsec:preliminary-notations}. Then, \Cref{subsec:velocity-field-construction} and \Cref{subsec: density construction} are devoted to the construction and the proof of the required properties of the velocity field $v$ and the solution $\mu$, respectively. The proof of \Cref{prop:non-uniqeness-L1} is then concluded in \Cref{subsec:proof-nonuniqueness-L1}  

\subsection{Overview of the construction} \label{sec: overview L1}
We consider a strictly increasing sequence of numbers $\{\eta_k\}_{k \ge 1} \subset [1,\infty)$ and an associated sequence of lengths $\{\ell_k\}_{k \ge 0}$ such that
\begin{equation*}
    \ell_0 = 1,\qquad \ell_{k+1} = \frac{\ell_k}{2^{1 +\eta_{k+1}}}\le \frac{\ell_{k}}{4}\quad \forall k\ge 1.
\end{equation*}
The core of the proof of \Cref{prop:non-uniqeness-L1} relies on constructing a family of divergence-free velocity fields $\{\bar B^k\}_{k \ge 0}$ together with corresponding solutions of the \eqref{eq:PDE} $\{\bar \Theta^k\}_{k \ge 0}$. For each $k$, the solution $\bar \Theta^k$ is supported at time $t=0$ on $2^d$ cubes of side length $\ell_{k+1}$, and by time $t=1$ it becomes uniformly distributed inside a cube of length $\ell_k$ (see \Cref{fig: time series 1}). Each velocity field $\bar B^k$ belongs to $C([0,1]; C^\omega(\mathbb{R}^d))$ and the associated solution $\bar \Theta^k$ is absolutely continuous with respect to Lebesgue measure for almost every time. The desired velocity field $v$ and the corresponding solution $\mu$ in \Cref{prop:non-uniqeness-L1} are then obtained from $\bar B^{0}$ and $\bar \Theta^{0}$, respectively, after reversing and reparametrizing time.  

 \begin{figure}[H] 
\centering
\includegraphics[scale = 0.35]{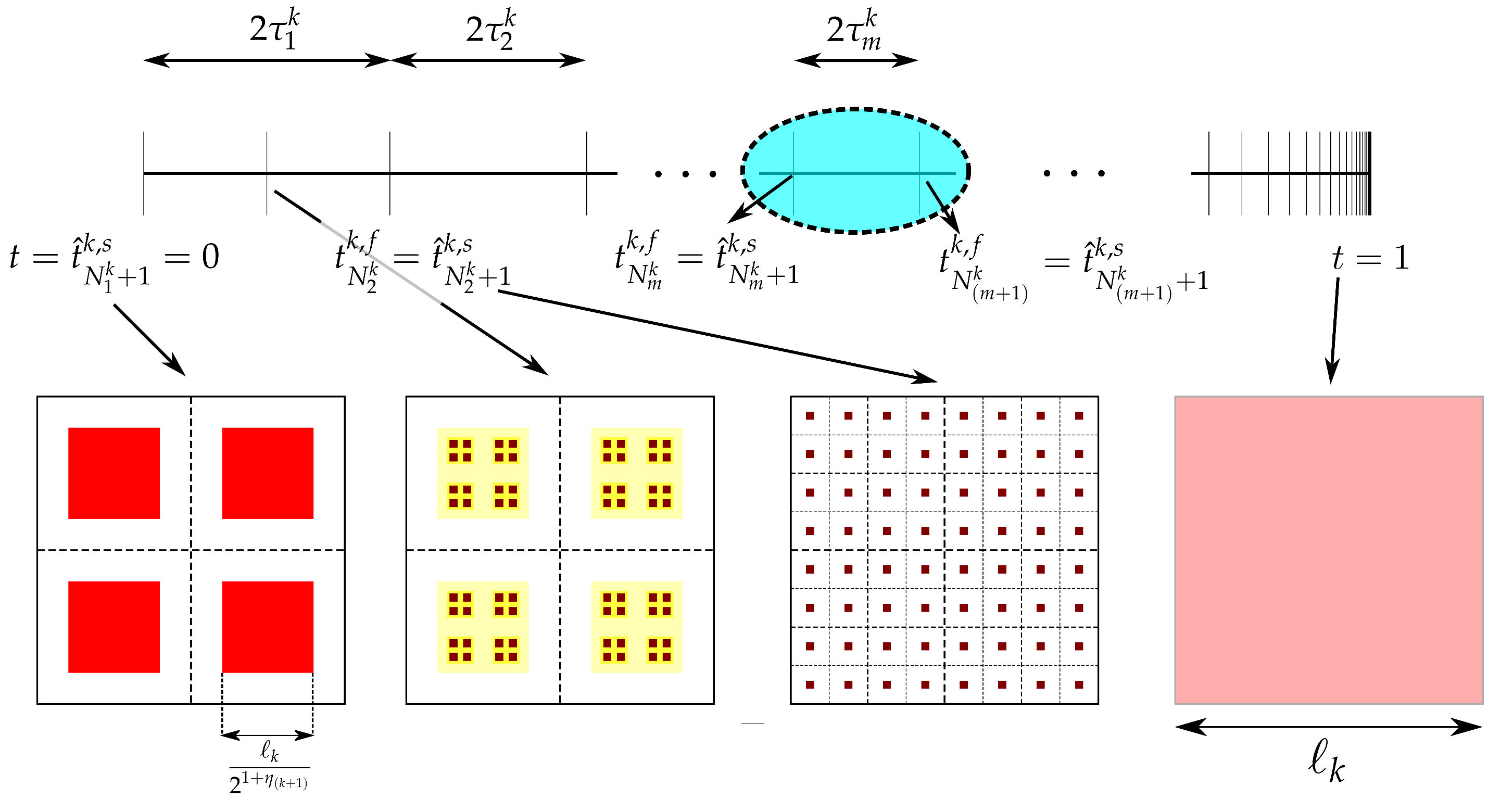}
 \caption{ Time decomposition of the interval $[0, 1]$ into subintervals of length $2 \tau^k_m$ for $m \in \mathbb{N}$. At time $t = 0$, the density is concentrated on cubes of side length $\ell_{k+1}$. By the time $t = 1$, the density is uniformly distributed inside a cube of side length $\ell_k$. In the first half of the interval of length $2 \tau^k_1$, the  ``breaking'' process takes place. At time $t = \hat{t}^{k, f}_{N^k_2}$, the 
 density concentrates on $2^{N^k_2 d}$ cubes. These cubes are not yet uniformly distributed. Instead, they are contained in several generations of nested fictitious cubes (shown in yellow in Panel 2), in an arrangement analogous to the Cantor-type construction from the previous section. In the second half of the interval, by time $t=t^{k,f}_{N_2^k}$, the small cubes are translated in parallel and arranged uniformly on a dyadic grid (as illustrated in Panel 3). The same ``breaking'' and ``parallel translation'' procedure is repeated on each interval of length $2\tau_m^k$. \Cref{fig: time series 2} illustrates the corresponding subdivision of a generic $2\tau_m^k$ interval.
 }
 \label{fig: time series 1}
\end{figure}

The construction of the velocity fields $\{\bar B^k\}_{k\ge 0}$ and the associated solutions $\{\bar\Theta^k\}_{k\ge 0}$ is based on a fixed-point iteration in suitable complete metric spaces (see \eqref{def: space B} and \eqref{def: space tau}), following an approach that is similar in spirit to \cite{BCK2024}. The necessity of working with a family, rather than a single velocity field and solution, arises from the requirement of closing the fixed-point iteration (see more on this below).

For every $k \geq 0$, we work on the time interval $[0,1]$, which is further subdivided into intervals of length $2\tau^k_m$ for $m \ge 1$, with $\sum_{m\ge 1} 2 \tau^k_m = 1$. We also introduce a strictly increasing sequence of indices $\{N^k_m\}_{m \geq 1}\subset \N$ with $N^k_1 = 1$. The parameters $\eta_k$, the length $\ell_k$ and the sequence $\{N^k_m\}_{m \geq 1}$ will be chosen recursively in $k$ (see Section \ref{sec: L1 time series}). The role of the sequence $\{N^k_m\}_{m \geq 1}$ is the following. On the first half of each interval of length $2\tau^k_m$ (the interval $[\hat{t}^{k,s}_{N^k_m + 1}, \hat{t}^{k,f}_{N^k_{m+1}}]$), we ``break cubes'' exactly $(N^k_{m+1} - N^k_m)$ times. On the second half (the interval $[t^{k,s}_{N^k_{m+1}}, t^{k,f}_{N^k_{m+1}}]$) of the same interval, we ``parallelly translate'' the fictitious cubes with indices ranging from $N^k_m + 1$ to $N^k_{m+1}$. This is a major difference from the $L^1$ construction in \cite{BCK2024}. There, once a given generation of cubes is ``broken'' into the next generation, the resulting cubes are immediately translated so as to be uniformly distributed on a dyadic grid. In contrast, in our construction the breaking process takes place over several generations and then followed by a parallelized translation, similar to Section \ref{sec:proof weak}.

The interval $[\hat{t}^{k,s}_{N^k_m + 1}, \hat{t}^{k,f}_{N^k_{m+1}}]$  is further divided equally into $(N^k_{m+1} - N^k_m)$ subintervals $[\hat{t}^{k,s}_n, \hat{t}^{k,f}_n]$ as shown in \Cref{fig: time series 2}, where $n \in \{N^k_m + 1, \dots, N^k_{m+1}\}$. Each such interval therefore has length $\tau^k_m / (N^k_{m+1} - N^k_m)$. At the beginning of the interval $[\hat{t}^{k,s}_{N^k_m + 1}, \hat{t}^{k,f}_{N^k_{m+1}}]$, at time $t = \hat{t}^{k,s}_{N^k_m + 1}$, the solution $\bar\Theta^k$ consists of $2^{N^k_m d}$ cubes, each of side length $\ell_{k + N^k_m}$. Over the time interval $[\hat{t}^{k,s}_{N^k_m + 1}, \hat{t}^{k,f}_{N^k_m + 1}]$, each of these cubes ``breaks'' into $2^{d}$ smaller cubes, resulting in a total of $2^{(N^k_m + 1)d}$ cubes of side length $\ell_{k + N^k_m + 1}$. Proceeding similarly, over the interval $[\hat{t}^{k,s}_{N^k_m + 2}, \hat{t}^{k,f}_{N^k_m + 2}]$, the new cubes break, resulting in $2^{(N^k_m + 2)d}$ cubes of side length $\ell_{k + N^k_m + 2}$. This process continues iteratively until time $t = t^{k,f}_{N^k_{m+1}}$, at which point $\bar\Theta^k$ is composed of $2^{N^k_{m+1} d}$ cubes, each of side length $\ell_{k + N^k_{m+1}}$.

 \begin{figure}[H]
\centering
 \includegraphics[scale = 0.22]{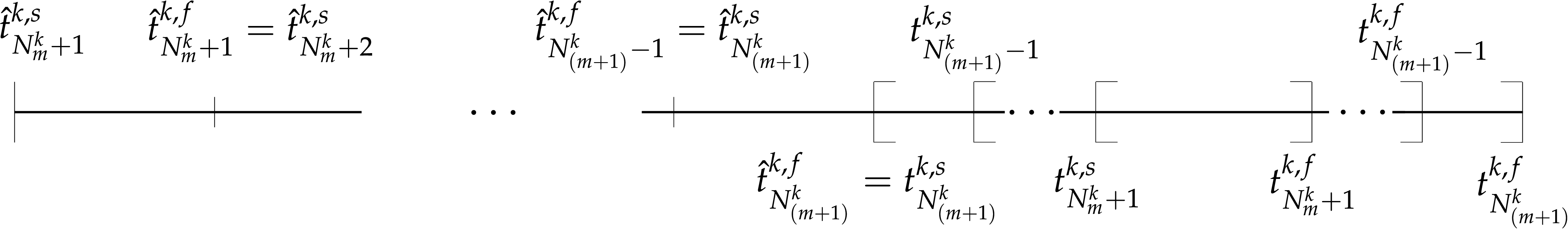}
 \caption{  shows an expanded view of an interval of length $2 \tau^k_m$ from \Cref{fig: time series 1}. It is divided into two halves. The first half consists of consecutive subintervals of the form $[\hat{t}^{k,s}_n, \hat{t}^{k,s}_n)$, while the second half consists of nested subintervals of the same form $[\hat{t}^{k,s}_n, \hat{t}^{k,s}_n)$ for $n \in \{N^k_m +1, \cdots N^k_{m+1}\}$. }
 \label{fig: time series 2}
\end{figure}

At this stage, however, the cubes are no longer distributed uniformly in space. Throughout the breaking process, each cube of side length $\ell_{k + N^k_{m+1}}$ at time $t = t^{k,f}_{N^k_{m+1}}$ stems from a unique sequence of nested cubes with side lengths $\ell_{k + n}$ for $n \in \{N^k_m + 1 , \dots, N^k_{m+1}\}$. Consequently, we regard the cubes of size $\ell_{k + N^k_{m+1}}$ as being embedded in a nested hierarchy of fictitious cubes, where the $n$th generation consists of cubes of side length $\ell_{k + n}$ for $n \in \{N^k_m + 1, \dots, N^k_{m+1}\}$ as illustrated in the second panel of \Cref{fig: time series 1}. This viewpoint is analogous to that of Section \ref{sec:proof weak}, where Cantor points are interpreted as lying within a nested structure of Cantor cubes, the essential difference being that in the present setting only finitely many generations are involved.

Once the cubes of side length $\ell_{k + N^k_{m+1}}$ are viewed as being embedded within multiple generations of fictitious cubes, we then perform a parallelized translation of the fictitious cubes on the interval $ [t^{k,s}_{N^k_{m+1}}, t^{k,f}_{N^k_{m+1}}]$, following the strategy of Section \ref{sec:proof weak}. This translation redistribute the cubes of side length $\ell_{k + N^k_{m+1}}$ uniformly on a dyadic grid.
There are, however, two differences compared to Section \ref{sec:proof weak}. The first is, of course, that here we parallelize the translation only across finitely many generations of fictitious cubes. The second, and more crucial difference, is that in the present construction the ratio $\ell_{k^\prime+1}/\ell_{k^\prime}$ is strictly less than $1/4$ and not constant. This does not pose any mathematical difficulty but only a technical one, which is addressed as follows.
At time $t = t^{k,s}_{N^k_{m+1}}$, the smallest fictitious cubes, of side length $\ell_{k + N^k_{m+1}}$, begin translating and expanding first, until their size reaches $1/4$ of that of the fictitious cubes of generation $n = N^k_{m+1} - 1$. Then, at time $t = t^{k,s}_{N^k_{m+1} - 1}$, both generations $n = N^k_{m+1} - 1$ and $n = N^k_{m+1}$ translate and expand in parallel (while maintaining $1/4$ size ratio), until the size of the $(N^k_{m+1} - 1)$th generation becomes one quarter of that of generation $N^k_{m+1} - 2$. This procedure continues until time $t = t^{k,s}_{N^k_m + 1}$, at which point the ratio of the size of all consecutive generations becomes $1/4$ and the situation becomes analogous to that of Section \ref{sec:proof weak}. From this time onward, all generations $n \in \{N^k_m + 1, \dots, N^k_{m+1}\}$ translate in parallel until $t = t^{k,f}_{N^k_m + 1}$, when the fictitious cubes of generation $N^k_m + 1$ 
(the largest cubes) reach their destination. The remaining generations continue to expand and translate in parallel until, at time $t = t^{k,f}_{N^k_m + 2}$, the fictitious cubes of generation $N^k_m + 2$ also reach their destination. This process continues until $t = t^{k,f}_{N^k_{m+1}}$, when the fictitious cubes of generation $N^k_{m+1}$ reach their destination. At this final time, $\bar\Theta^k$ is composed of $2^{N^k_{m+1}d}$ cubes, each of side length $\ell_{k + N^k_{m+1}}$, distributed uniformly on a dyadic grid. We emphasize that whenever we refer to the expansion of cubes, this applies only to the fictitious cubes. The cubes in which $\bar \Theta^k$ is supported, never change their size. Instead, they are transported rigidly by the translations of the fictitious cubes in which they are embedded.

We conclude the overview by briefly describing the ``breaking'' process. As mentioned earlier, for any $n \geq 2$, at time $t = \hat{t}^{k,s}_n$ the density $\bar \Theta^k$ consists of $2^{(n-1)d}$ cubes of side length $\ell_{k+n-1}$. By time $t = \hat{t}^{k,f}_n$, each of these cubes ``breaks'' into $2^{d}$ smaller cubes, so that $\bar\Theta^k$ is then composed of $2^{nd}$ cubes of side length $\ell_{k+n}$. In the definition of $\bar B^k$, this breaking step is precisely where we use $2^{(n-1)d}$ reverse-in-time and appropriately time-scaled copies of the vector field $\bar B^{k+n-1}$. This naturally motivates a construction based on a fixed-point argument. A crucial point to note is that one cannot simply use spatially scaled copies of a single velocity field unless the ratio $\ell_{k+1} / \ell_k$ remains constant in $k$. This was the case in the $L^1$ construction of \cite{BCK2024} and, as such, a fixed point iteration based on a single velocity field was sufficient. In contrast, in the present construction, depending on the modulus of continuity $\omega$, we are required to choose lengths such that $\ell_{k+1} / \ell_k$ diverges as $k \to \infty$ to close the fixed-point iteration. This then requires us to work with a family of velocity fields rather than a single one.

\subsection{Preliminary Notations}\label{subsec:preliminary-notations} 

The goal of this section is to introduce all the notations and preliminary constructions that will be used to build the velocity field $v$ and the anomalous solution $\mu$ of \Cref{prop:non-uniqeness-L1}.

\subsubsection{Auxiliary modulus of continuity}
 In addition to the non-Osgood modulus of continuity $\omega:[0,\infty)\to [0,\infty)$, we consider an auxiliary non-Osgood modulus of continuity $\tilde\omega:[0,\infty)\to [0,\infty)$ such that $\tilde{\omega}\le \omega$, and $\omega(r)/\tilde{\omega}(r)\to \infty$ as $r\to 0^{+}$. The existence of such $\tilde{\omega}$ is ensured by \Cref{lem:auxiliary-moc}. 

We call $\tilde\Omega:[0,\infty)\to [0,\infty)$ the function
\begin{equation*}
    \tilde\Omega(r):= \int_{0}^{r}\frac{1}{\tilde\omega(s)}ds\qquad \forall r\in [0,\infty),
\end{equation*}
which is well-defined by the non-Osgood condition on $\tilde\omega$. Note that $\tilde\Omega \in C^{1}_{\loc}((0,\infty))$, it is strictly increasing, and $\tilde\Omega(0)=0$.

We also introduce the non-increasing function $W:(0,\infty)\to [1,\infty)$ given by
\begin{equation} \label{def: weight}
    W(r):= \inf_{s\in (0,r]}\frac{\omega(s)}{\tilde{\omega}(s)} \qquad \forall r\in (0,\infty).
\end{equation}
Later, we will use $W$ as a weight in the definition of complete metrics on the ambient spaces \eqref{def: space B} and \eqref{def: space tau}, in which the fixed-point iteration will be set.  
Note that $W(r)\to \infty$ as $r\to 0^{+}$, thanks to the choice of the auxiliary modulus of continuity $\tilde{\omega}$.

\subsubsection{Length scales} Below we introduce a few objects associated to a given sequence of numbers $\{\eta_{k}\}_{k\ge 1}\subset [1,\infty)$. Later, in \Cref{lem:choice-parameters}, we will make a specific choice of this sequence, subjected to a certain set of constraints.

We denote by $\{\nu_{k}\}_{k\ge 0}$ the sequence of partial sums from $\{\eta_{k}\}_{k\ge 1}$:
\begin{equation*}
    \nu_{0}:= 0,\qquad \nu_{k}:= \sum_{j=1}^{k}\eta_{j}\quad \forall k\ge 1.
\end{equation*}
Next, we define a sequence of lengths $\{\ell_{k}\}_{k\ge 0}$ as follows:
\begin{equation*}
    \ell_{0}:= 1,\qquad \ell_{k}:= 2^{-k-\nu_{k}}\qquad \forall k\ge 1.
\end{equation*} 

\subsubsection{Time series} 
\label{sec: L1 time series}
Corresponding to the sequence $\{\eta_{k}\}_{k\ge 1}$ above, for every $k\ge 0$, we consider a strictly increasing sequence of positive integers $\{N_{m}^{k}\}_{m\ge 1}\subset \N$ such that the following holds:
\begin{equation}\label{eq:choice-of-N^k_m}
    N_{1}^{k}=1,\qquad \sum_{m\ge 1}\tilde{\Omega} \left(\ell_{k}2^{-N_{m}^{k}-1}\right) \; \in \; \left(\tilde{\Omega}\left(\ell_{k}/4\right), \, 2 \, \tilde{\Omega}\left(\ell_{k}/4\right)\right)\qquad \forall k\ge 0.
\end{equation}
Such sequence exists because $\tilde{\Omega}$ is increasing, continuous and $\tilde{\Omega}(0)=0$. As mentioned in the overview in \Cref{sec: overview L1}, we will parallelize the motion of the $m$th batch of fictitious cubes, i.e. cubes with index from $N_{m}^{k}+1$ to $N_{m+1}^{k}$.

Next, for every $k\ge 0$, we define a sequence $\{\bar \tau^{k}_{m}\}_{m\ge 1}\subset (0,\infty)$ given by  
\begin{equation*}
\begin{aligned}
    \bar\tau_{m}^{k}&:= \sum_{n=N_{m}^{k}+2}^{N_{m+1}^{k}}\tilde{\Omega}(\ell_{k+n-2}2^{-3})-\tilde{\Omega}(\ell_{k+n-1}2^{-1})\\
    &\quad \, +\tilde{\Omega}(\ell_{k} 2^{-N_{m}^{k}-1})-\tilde{\Omega}(\ell_{k+N_{m}^{k}}2^{-1})\\
    &\quad \, + \sum_{n=N_{m}^{k}+2}^{N_{m+1}^{k}}\tilde{\Omega}(\ell_{k} 2^{-n})-\tilde{\Omega}(\ell_{k} 2^{-n-1}).
\end{aligned}\qquad \forall m \ge 1, \quad \forall k\ge 0.    
\end{equation*}
Summing telescopically some of the terms in the definition of $\bar \tau_{m}^{k}$ and using the monotonicity of  $\tilde{\Omega}$, we find 
\begin{equation}\label{eq:bound-bartau{m}{k}}
    \bar\tau^{k}_{m}\le 3\tilde{\Omega}(\ell_{k}2^{-N_{m}^{k}-1})\qquad \forall k\ge 0,\quad \forall m\ge 1.
\end{equation}
Thanks to \eqref{eq:choice-of-N^k_m} and \eqref{eq:bound-bartau{m}{k}}, the sequence $\bar{\tau}_{m}^{k}$ is summable in $m$ and we can define
\begin{equation}\label{eq:def-Tk}
    T^{k}:= \sum_{m\ge 1}2\bar\tau_{m}^{k}\le 12\tilde{\Omega}(\ell_{k}2^{-2})\qquad \forall k\ge 0.
\end{equation}
Finally, for every $k \geq 0$, we introduce the following sequence of normalized times:
\begin{equation}\label{eq:def-tau{m}{k}}
    \tau_{m}^{k}:=\frac{\bar{\tau}_{m}^{k}}{T^{k}}\qquad \forall m\ge 1,\quad \forall k\ge 0.
\end{equation}
We have now all the ingredients to explain the constraint on the sequence $\{\eta_{k}\}_{k\ge 1}$: 
\begin{lem}\label{lem:choice-parameters} There exists a sequence $\{\eta_{k}\}_{k\ge 1}\subset [1,\infty)$ such that the following condition holds:
\begin{equation}\label{eq:choice-parameters-fixed-point}
    W(\ell_{k+N_{m}^{k}})\ge 4m\frac{N_{m+1}^{k}-N_{m}^{k}}{\tau_{m}^{k}}W(\ell_{k})\qquad \forall m\ge 1,\quad \forall k\ge 0.
\end{equation}
\end{lem}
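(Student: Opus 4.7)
The plan is to construct the sequence $\{\eta_k\}_{k\ge 1}$ recursively, exploiting three facts: (i) $W(r)\to\infty$ as $r\to 0^+$, which follows from the choice of $\tilde\omega$ in \Cref{lem:auxiliary-moc}; (ii) the sequence $\{N_m^k\}_{m\ge 1}$, defined by \eqref{eq:choice-of-N^k_m}, depends only on $\ell_k$; and (iii) the upper bound $T^k\le 12\,\tilde\Omega(\ell_k/4)$ from \eqref{eq:def-Tk} depends only on $\ell_k$. The general idea is to choose $\eta_k$ large enough that $\ell_k$ is much smaller than $\ell_{k-1}$, driving $W(\ell_k)$ to infinity rapidly and thereby dominating the right-hand side of \eqref{eq:choice-parameters-fixed-point}, which is controlled by finitely many previously-determined quantities.

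The first technical step is to extract a lower bound on $\bar\tau_m^k$ that, combined with the upper bound on $T^k$, yields a quantity of the form
\begin{equation*}
\frac{N_{m+1}^k-N_m^k}{\tau_m^k}\le \Phi_m(\ell_k,\ell_{k+1},\dots,\ell_{k+N_m^k})
\end{equation*}
depending only on the first $N_m^k+1$ length scales and $\tilde\Omega$. All three summands defining $\bar\tau_m^k$ are non-negative by monotonicity of $\tilde\Omega$ and the relation $\ell_{j+1}\le \ell_j/4$. When $N_{m+1}^k\ge N_m^k+2$, the telescoping third sum gives $\bar\tau_m^k\ge \tilde\Omega(\ell_k 2^{-N_m^k-2})-\tilde\Omega(\ell_k 2^{-N_{m+1}^k-1})$, which depends only on $\ell_k$. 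In the degenerate case $N_{m+1}^k=N_m^k+1$ this sum is empty, and one must use the second term $\tilde\Omega(\ell_k 2^{-N_m^k-1})-\tilde\Omega(\ell_{k+N_m^k}/2)$; to make this term at least, say, half of $\tilde\Omega(\ell_k 2^{-N_m^k-1})$, it suffices to require $\tilde\Omega(\ell_{k+N_m^k}/2)\le \tfrac{1}{2}\tilde\Omega(\ell_k 2^{-N_m^k-1})$, a condition enforceable when choosing $\eta_{k+N_m^k}$.

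With this decoupling in hand, I would construct $\{\eta_k\}_{k\ge 1}\subset[1,\infty)$ inductively. Having picked $\eta_1,\dots,\eta_{k-1}$, the quantities $\ell_{j}$, $\{N_m^j\}$ and all $\Phi_m(\ldots)$ with $j\le k-1$ and $j+N_m^j<k$ are determined. I then choose $\eta_k\ge 1$ large enough that the finitely many constraints
\begin{equation*}
W(\ell_k)\ge 4m\,\Phi_m(\ell_j,\ldots)\,W(\ell_j),\qquad \tilde\Omega(\ell_k/2)\le \tfrac{1}{2}\tilde\Omega\!\left(\ell_j 2^{-N_m^j-1}\right)
\end{equation*}
are satisfied for every pair $(j,m)$ with $j\le k-1$ and $j+N_m^j=k$. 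Such pairs are finite in number because $N_m^j\ge m$ forces $m\le k-1$, and both conditions can be satisfied by sending $\ell_k\to 0^+$ since $W(r)\to\infty$ and $\tilde\Omega(r)\to 0$. A straightforward book-keeping argument shows that these inductive choices guarantee \eqref{eq:choice-parameters-fixed-point} for every $k\ge 0$ and $m\ge 1$.

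The main obstacle is the intertwined dependence of $W(\ell_{k+N_m^k})$, $\tau_m^k$ and $T^k$ on $\eta_j$'s with differing index ranges, which a priori prevents a straightforward induction. The crucial observation that unlocks the argument is that \eqref{eq:def-Tk} already provides a future-independent upper bound on $T^k$, so only a lower bound on $\bar\tau_m^k$ needs to be secured; this in turn is achieved either telescopically (from the third summand) or, in the degenerate case, by forcing $\ell_{k+N_m^k}$ to be sufficiently small when $\eta_{k+N_m^k}$ is chosen. Once the decoupling is in place, the recursion closes in finitely many newly-arising constraints per step.
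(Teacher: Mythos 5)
Your proposal is correct and follows essentially the same route as the paper: a recursive choice of $\eta_k$, at each step enforcing the finitely many constraints indexed by pairs $(j,m)$ with $j+N_m^j=k$, which can be met because $W(r)\to\infty$ as $r\to 0^+$. The paper's two-line induction leaves implicit the decoupling you spell out — that $T^j\le 12\,\tilde\Omega(\ell_j/4)$ together with the telescoped third sum (or, when $N_{m+1}^j=N_m^j+1$, the second summand plus your auxiliary smallness requirement on $\tilde\Omega(\ell_k/2)$) yields a bound on $(N_{m+1}^j-N_m^j)/\tau_m^j$ independent of the not-yet-chosen scales — so yours is a more careful rendering of the same argument.
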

\begin{proof}
We proceed inductively as follows. Suppose that $\eta_{1},\dots,\eta_{k-1}$ have been chosen in such a way that 
\begin{equation*}
    W(\ell_{j+N_{m}^{j}})\ge 4m\frac{N_{m+1}^{j}-N_{m}^{j}}{\tau_{m}^{j}}W(\ell_{j})\qquad \forall j\ge 0,\,m\ge 1:\,\, j,j+N_{m}^{j}\in \{0,\dots,k-1\}.
\end{equation*}
Note that this condition vacuously holds for $k=1$. We will prove that $\eta_{k}\ge 1$ can be chosen in such a way that the same condition holds replacing $k$ with $k+1$. This amounts to checking a finite number of conditions, namely:
\begin{equation*}
    W(\ell_{k})\ge 4m\frac{N_{m+1}^{j}-N_{m}^{j}}{\tau_{m}^{j}}W(\ell_{j})\qquad \forall j\in \{0,\dots,k-1\},\,  m\ge 1:\,\, j+N_{m}^{j}=k.
\end{equation*}
Since $\ell_{k}=\ell_{k-1}2^{-1-\eta_{k}}$, and $W(r)\to \infty$ as $r\to 0^{+}$, these finitely many constraints can be satisfied simultaneously, provided that we choose $\eta_{k}$ sufficiently large.  
\end{proof}
\noindent From this point onward, the sequence $\{\eta_{k}\}_{k\ge 1}$ refers to the one in \Cref{lem:choice-parameters}. 

For every $k\ge 0$, we introduce two time sequences $\{\hat{t}_{n}^{k,s}\}_{n\ge 2}, \{\hat{t}_{n}^{k,f}\}_{n\ge 2}\subset [0,1)$ representing, respectively, the initial and final time of the splitting process of cubes with length $\ell_{k + n-1}$ that leads to the creation of cubes with side length $\ell_{k+n}$ (see the overview in \Cref{sec: overview L1}):
\begin{equation*}
\begin{gathered}
    \hat{t}_{n}^{k,s}:= \sum_{j=1}^{m-1}2\tau_{j}^{k}+ \frac{n-1-N_{m}^{k}}{N_{m+1}^{k}-N_{m}^{k}}\tau_{m}^{k},\qquad \hat{t}_{n}^{k,f}:= \hat{t}_{n}^{k,s} + \frac{\tau_{m}^{k}}{N_{m+1}^{k}-N_{m}^{k}} \\[5pt]
    \forall n\in \{N_{m}^{k}+1,\dots,N_{m+1}^{k}\},\quad \forall m\ge 1,\quad \forall k\ge 0.
\end{gathered}
\end{equation*}
We also introduce, for every $k\ge 0$, sequences $\{t_{n}^{k,i}\}_{n\ge 2}, \{t_{n}^{k,f}\}_{n\ge 2}\subset [0,1)$ representing, respectively, the initial and final time of the translation of cubes of length $\ell_{n+k}$:

\begin{equation*}
    \begin{gathered}
    \begin{aligned}
        t_{n}^{k,s}&:= \sum_{j=1}^{m-1}2\tau_{j}^{k}+\tau_{m}^{k}+\frac{1}{T^{k}}\sum_{h=n+1}^{N_{m+1}^{k}}\tilde{\Omega}(\ell_{k+h-2}2^{-3})-\tilde{\Omega}(\ell_{k+h-1}2^{-1}),\\
        t_{n}^{k,f}&:= t_{N^k_m+1}^{k,s} +\frac{1}{T^{k}}\left(\tilde{\Omega}(\ell_{k} 2^{-N_{m}^{k}-1})-\tilde{\Omega}(\ell_{k+N_{m}^{k}}2^{-1})\right)+\frac{1}{T^{k}}\sum_{h=N_{m}^{k}+2}^{n}\tilde{\Omega}(\ell_{k} 2^{-h})-\tilde{\Omega}(\ell_{k} 2^{-h-1}),
        \end{aligned}\\[5pt]
        \forall n\in \{N_{m}^{k}+1,\dots,N_{m+1}^{k}\},\quad \forall m\ge 1,\quad  \forall k\ge 0.
    \end{gathered}
\end{equation*}

Note that $\hat{t}_{n}^{k,s}\le\hat{t}_{n}^{k,f}\le t_{n}^{k,s}\le t_{n}^{k,f}$ for all $n\ge 2$, and $\hat{t}_{n}^{k,s}, \hat{t}_{n}^{k,f},t_{n}^{k,s},t_{n}^{k,f}\to 1^{-}$ as $n\to \infty$. The splitting intervals $\{[\hat{t}_{n}^{k,s},\hat{t}_{n}^{k,f}]\}_{n\ge 2}$ have pairwise disjoint interiors, while the translation intervals $\{[t_{n}^{k,i},t_{n}^{k,f}]\}_{n\ge 2}$ are nested:
\begin{equation*}
    [t_{n}^{k,s},t_{n}^{k,f}]\subset [t_{n'}^{k,s},t_{n'}^{k,f}]\qquad \forall N_{m}^{k}+1\le n< n'\le N_{m+1}^{k},\quad \forall m\ge 1,\quad \forall k\ge 0.
\end{equation*}

The lengths of the translation intervals satisfy the following identities:
\begin{gather}
    T^{k}\left(t_{N_{m}^{k}+1}^{k,f}-t_{N_{m}^{k}+1}^{k,s}\right)= \tilde{\Omega}(\ell_{k} 2^{-N_{m}^{k}-1})-\tilde{\Omega}(\ell_{k+N_{m}^{k}}2^{-1})\qquad \forall m\ge 1,\quad \forall k\ge 0, \label{eq:length-intervals-special}\\[5pt]
    \begin{gathered}
        T^{k}\left(t_{n-1}^{k,s}-t_{n}^{k,s}\right)=\tilde{\Omega}(\ell_{k+n-2}2^{-3})-\tilde{\Omega}(\ell_{k+n-1}2^{-1}),\qquad T^{k}\left(t_{n}^{k,f}-t_{n-1}^{k,f}\right)=\tilde{\Omega}(\ell_{k} 2^{-n})-\tilde{\Omega}(\ell_{k} 2^{-n-1})\\[5pt]
        \forall n\in \{N_{m}^{k}+2,\dots,N_{m+1}^{k}\},\quad \forall m\ge 1,\quad \forall k\ge 0. 
    \end{gathered}\label{eq:length-intervals-nonspecial}
\end{gather}
\subsubsection{Flow design} 
\paragraph{Time cut-off functions.} We introduce suitable smooth time cutoff functions adapted to the translation steps of our construction. Let $\chi:\mathbb{R} \to [0, 1]$ be a non-decreasing $C^\infty$ function such that
\begin{equation}\label{eq:properties-cutoff-chi}
    \chi(t)=0\quad \forall t\le 0,\qquad \chi(t)=1\quad \forall t\ge 1,\qquad |\dot\chi(t)|\le 2\mathbbm{1}_{(0,1)}(t)\quad \forall t\in \R.
\end{equation}
We introduce smooth cut-off functions corresponding to the translation steps of the $n$th generation. In the case $n\in \{N_{m}^{k}+1\}_{m\ge 1}$, we define smooth cutoff functions $\chi_{N_{m}^{k}+1}^{k}$ such that
\begin{equation*}
    \chi_{N_{m}^{k}+1}^{k}(t):=T^{k}(t_{N_{m}^{k}+1}^{k,f}-t_{N_{m}^{k}+1}^{k,s})\chi\left(\frac{t-t_{N_{m}^{k}+1}^{k,s}}{t_{N_{m}^{k}+1}^{k,f}-t_{N_{m}^{k}+1}^{k,s}}\right)\qquad \forall m\ge 1,\quad \forall k\ge 0.
\end{equation*}
Thanks to \eqref{eq:properties-cutoff-chi} and \eqref{eq:length-intervals-special}, we have
\begin{equation}\label{eq:properties-cutoff-chin-special}
\begin{gathered}
    \chi_{N_{m}^{k}+1}^{k}(t)=0\quad \forall t\le t_{N_{m}^{k}+1}^{k, s},\qquad \chi_{N_{m}^{k}+1}^{k}(t)= \tilde{\Omega}(\ell_{k} 2^{-N_{m}^{k}-1})-\tilde{\Omega}(\ell_{k+N_{m}^{k}}2^{-1})\quad \forall t\ge t_{N_{m}^{k}+1}^{k, f},\\[5pt]
    |\dot \chi_{N_{m}^{k}+1}^{k}(t)|\le 2T^{k}\mathbbm{1}_{(t_{N_{m}^{k}+1}^{k, s},t_{N_{m}^{k}+1}^{k, f})}(t)\quad \forall t\in \R.
\end{gathered}
\end{equation}
In the case $n\not \in\{N_{m}^{k}+1\}_{m\ge 1}$, we define two smooth cutoff functions $\chi_{n}^{k,s}, \chi_{n}^{k,f}$ as follows:
\begin{equation*}
    \begin{gathered}
        \chi_{n}^{k,s}(t):=T^{k}(t_{n-1}^{k,s}-t_{n}^{k,s})\chi\left(\frac{t-t_{n}^{k,s}}{t_{n-1}^{k,s}-t_{n}^{k,s}}\right),\qquad \chi_{n}^{k,f}(t):=T^{k}(t_{n}^{k,f}-t_{n-1}^{k,f})\chi\left(\frac{t-t_{n-1}^{k,f}}{t_{n}^{k,f}-t_{n-1}^{k,f}}\right)\\[5pt]
        \forall n\in \{N_{m}^{k}+2,\dots, N_{m+1}^{k}\},\quad \forall m\ge 1,\quad \forall k\ge 0.
    \end{gathered}
\end{equation*}
As before, thanks to \eqref{eq:properties-cutoff-chi} and \eqref{eq:length-intervals-nonspecial}, we have
\begin{gather}
    \begin{gathered}
        \chi_{n}^{k,s}(t)=0\quad \forall t\le t_{n}^{k,s},\qquad \chi_{n}^{k,s}(t)=\tilde{\Omega}(\ell_{k+n-2}2^{-3})-\tilde{\Omega}(\ell_{k+n-1}2^{-1})\quad \forall t\ge t_{n-1}^{k,s},\\[5pt]
        |\dot \chi_{n}^{k,s}(t)|\le 2T^{k}\mathbbm{1}_{(t_{n}^{k,s},t_{n-1}^{k,s})}(t)\quad \forall t\in \R.
    \end{gathered}\label{eq:properties-cutoff-chin-nonspecial-in}\\[10pt]
    \begin{gathered}
        \chi_{n}^{k,f}(t)=0\quad \forall t\le t_{n-1}^{k,f},\qquad \chi_{n}^{k,f}(t)=\tilde{\Omega}(\ell_{k}2^{-n})-\tilde{\Omega}(\ell_{k}2^{-n-1})\quad \forall t\ge t_{n}^{k,f},\\[5pt]
        |\dot \chi_{n}^{k,f}(t)|\le 2T^{k}\mathbbm{1}_{(t_{n-1}^{k,f},t_{n}^{k,f})}(t)\quad \forall t\in \R.
    \end{gathered}\label{eq:properties-cutoff-chin-nonspecial-fin}
\end{gather}
\paragraph{Motion of cubes.}
Below, we prescribe the distance $r_{n}^{k}(t)$ between adjacent fictitious cubes of the $n$th generation at time $t$:
\begin{equation}\label{eq:shape-r{k}{n}}
    \begin{gathered}
        r_{1}^{k}(t):= \ell_{k} 2^{-1}\qquad \forall t\in [0,1),\quad \forall k\ge 0,\\[5pt]
        r_{N_{m}^{k}+1}^{k}(t):=
            \tilde{\Omega}^{-1}\left(\tilde{\Omega}(\ell_{k+N_{m}^{k}} 2^{-1})+\chi^{k}_{N_{m}^{k}+1}(t)\right)\qquad \forall m\ge 1,\quad \forall k\ge 0,\\[5pt]
        \begin{gathered}
            r_{n}^{k}(t):= \begin{cases}
            \tilde{\Omega}^{-1}\left(\tilde{\Omega}(\ell_{k+n-1}2^{-1})+\chi_{n}^{k,s}(t)\right)\qquad &\forall t\in [0,t_{n-1}^{k,s}),\\
            r_{n-1}^{k}(t)/4\qquad &\forall t\in [t_{n-1}^{k,s},t_{n-1}^{k,f}),\\
            \tilde{\Omega}^{-1}\left(\tilde{\Omega}(\ell_{k} 2^{-n-1})+\chi_{n}^{k,f}(t)\right)\qquad &\forall t\in [t_{n-1}^{k,f},1],
        \end{cases}\\[5pt]
        \forall n\in \{N_{m}^{k}+2,\dots,N_{m+1}^{k}\},\quad \forall m\ge 1,\quad \forall k\ge 0.
        \end{gathered}
    \end{gathered}
\end{equation}
    Using \eqref{eq:properties-cutoff-chin-special},\eqref{eq:properties-cutoff-chin-nonspecial-in}, \eqref{eq:properties-cutoff-chin-nonspecial-fin}, and the smoothness of the cutoff functions, one can verify that \begin{gather}\label{eq:endpoint-r}
        r_{n}^{k}\in C^{1}([0, 1]), \qquad r_{n}^{k}(t)=\ell_{k+n-1}2^{-1}\quad \forall t\le t_{n}^{k,s},\qquad r_{n}^{k}(t)=\ell_{k}2^{-n}\quad \forall t\ge t_{n}^{k,f},\qquad \forall n\ge 1,\quad \forall k\ge 0.
    \end{gather}
Moreover, we have
\begin{align} \label{eq: ratio dist cubes L1}
r^k_n(t) \leq \frac{r^k_{n-1}(t)}{4}, \qquad \forall t \leq t^{k, f}_{n-1},\quad \forall n \geq 2.
\end{align}
From \eqref{eq:endpoint-r}, one sees that
    \begin{align} \label{eq: supp r dot}
    \dot r_{n}^{k}=0 \qquad \forall t \in[0,1]\setminus (t_{n}^{k,s},t_{n}^{k,f}).
    \end{align}
    In addition, the following recursive formulas hold for time derivatives:
    \begin{equation}\label{eq:shape-velocity-r{k}{n}}
        \begin{gathered}
        \dot r_{1}^{k}(t)=0\qquad \forall t\in [0,1],\quad \forall k\ge 0,\\[5pt]
            \dot r_{N_{m}^{k}+1}^{k}(t)= \dot \chi_{N_{m}^{k}+1}^{k}(t)\tilde\omega(r_{N_{m}^{k}+1}^{k}(t))\qquad \forall t\in [0,1],\quad \forall m\ge 1,\quad \forall k\ge 0,\\[5pt]
            \begin{gathered}
                 \dot r_{n}^{k}(t)=\begin{cases}
            \dot\chi_{n}^{k,s}(t)\tilde\omega(r_{n}^{k}(t))\qquad &\forall t\in [0,t_{n-1}^{k,s}),\\
            \dot r_{n-1}^{k}(t)/4\qquad &\forall t\in [t_{n-1}^{k,s},t_{n-1}^{k,f}),\\
            \dot\chi_{n}^{k,f}(t)\tilde\omega(r_{n}^{k}(t))\qquad &\forall t\in [t_{n-1}^{k,f},1],
        \end{cases}\\[5pt]
        \forall n\in \{N_{m}^{k}+2,\dots,N_{m+1}^{k}\},\quad \forall m\ge 1,\quad \forall k\ge 0.
            \end{gathered}
        \end{gathered}
    \end{equation}
    
    Next, for every $k\ge 0$, $n\ge 1$, and $\sigma \in \mathfrak{S}^{n}$, we denote by $c_{n,\sigma}^{k}(t)$ the position at time $t$ of the center of the $n$th generation fictitious cube corresponding to the symbol $\sigma$:
    \begin{equation}\label{def: center of cubes L1}
        c_{n,\sigma}^{k}(t):= \sum_{j=1}^{n}\frac{r_{j}^{k}(t)}{2}\sigma_{j}\qquad \forall t\in [0,1],\quad \forall \sigma\in \mathfrak{S}^{n},\quad \forall n\ge 1,\quad \forall k\ge 0.
    \end{equation}
    From the construction we see that cubes of the same generation are essentially disjoint at each time:
    \begin{equation}\label{eq:disjoint-supports-same-generation-densities}
    \begin{gathered}
    \inter  \left(c_{n,\sigma}^{k}(t)+r_{n}^{k}(t)Q\right)\cap \inter\left(c_{n,\tilde\sigma}^{k}(t)+r_{n}^{k}(t)Q\right)=\emptyset,\\[5pt]
     \forall t\in [0,1],\quad \forall \sigma, \tilde\sigma \in \mathfrak{S}^{n},\, \sigma \neq \tilde{\sigma},\quad \forall n\ge 1,\quad \forall k\ge 0.
    \end{gathered}
    \end{equation}
Moreover, the definition \eqref{def: center of cubes L1} and \eqref{eq: ratio dist cubes L1} give
\begin{align} \label{eq: containment cubes L1}
c^k_{n, \sigma}(t) + r^k_n(t) Q \subset c^k_{n-1, \sigma^\prime}(t) + \frac{r^k_{n-1}(t)}{2} Q \qquad \forall t \in[0, t^{k,f}_{n-1}],\quad \forall  \sigma \in \mathfrak{S}^n,\quad \forall n \geq 2,
\end{align}
where we recall the notation $\sigma':=(\sigma_{1},\dots,\sigma_{n-1})\in \mathfrak{S}^{n-1}$, for all $\sigma=(\sigma_{1},\dots,\sigma_{n-1},\sigma_{n})\in \mathfrak{S}^{n}$, $n\ge 2$.

\subsection{Velocity field construction}\label{subsec:velocity-field-construction}
\subsubsection{Setting of the fixed point argument for the velocity field}
\paragraph{Space of velocity fields $\mathcal{B}$.}
We introduce a vector space of families of incompressible velocity fields: 
\begin{equation} \label{def: space B}
    \mathcal{B}:= \left\{B=\{B^{k}\}_{k\ge 0}\subset C([0,1]; C^{\omega}(\R^{d})): \text{\eqref{eq:B1} and \eqref{eq:B2} hold, and $\lVert B\rVert_{\mathcal{B}}<\infty$}\right\},
\end{equation}
where 
\begin{subequations}
    \begin{equation}\tag{B-1}\label{eq:B1}
        \diver B_{t}^{k}=0,\qquad B_{t}^{k}(x)=0\quad \forall x\in \R^{d}\setminus \ell_{k} Q\qquad \forall t\in [0,1],\quad \forall k\ge 0.
    \end{equation}
    \begin{equation}\tag{B-2}\label{eq:B2}
        B_{0}^{k}=B_{1}^{k}=0\qquad \forall k\ge 0,
    \end{equation}
\end{subequations}
and the norm $\lVert \cdot \rVert_{\mathcal{B}}$ is defined as
\begin{equation*}
    \lVert B\rVert_{\mathcal{B}}:= \sup_{k\ge 0}\sup_{t\in [0,1]}W(\ell_{k})\lVert B^{k}_{t}\rVert_{C^{\omega}(\R^{d})},
\end{equation*}
where $W$ is the weight as defined in \eqref{def: weight}. One can check that $(\mathcal{B},\lVert\cdot\rVert_{\mathcal{B}})$ is a Banach space.

\paragraph{Iteration map $\mathcal{I}_{\mathcal{B}}$.} Recall the notation $u^{e}$ for the building block introduced in \Cref{lem-Building-block}. For every $B=\{B^{k}\}_{k\ge 0}\in \mathcal{B}$, we define a sequence of vector fields $\mathcal{I}_{\mathcal{B}}(B):=\{\mathcal{I}_{\mathcal{B}}(B)^{k}\}_{k\ge 0}$ as follows:
\begin{equation} \label{def: IBB}
\begin{gathered}
    \mathcal{I}_{\mathcal{B}}(B)^{k}:[0,1]\times \R^{d}\to \R^{d}\qquad \forall k\ge 0,\\[10pt]
    \mathcal{I}_{\mathcal{B}}(B)^{k}_{t}(x):=\begin{cases}
        \underset{\sigma \in \mathfrak{S}^{n-1}}{\sum}\frac{1}{\hat{t}_{n}^{k,f}-\hat{t}_{n}^{k,s}}B^{k+n-1}\left(\frac{\hat{t}_{n}^{k,f}-t}{\hat{t}_{n}^{k,f}-\hat{t}_{n}^{k,s}},x-c_{n-1,\sigma}^{k}(\hat{t}_{n}^{k,s})\right)\qquad & \forall t\in [\hat{t}^{k,s}_{n},\hat{t}^{k,f}_{n}),\quad \forall n\ge 2,\\[10pt]
        \overunderset{N_{m+1}^{k}}{n=N_{m}^{k}+1}{\sum} \; \underset{\sigma \in \mathfrak{S}^{n}}{\sum}\frac{\dot r_{n}^{k}(t)}{2}u^{\sigma_{n}}\left(\frac{x-c_{n,\sigma}^{k}(t)}{r_{n}^{k}(t)}\right)\qquad &\forall t\in [t_{N_{m+1}^{k}}^{k,s},t_{N_{m+1}^{k}}^{k,f}),\quad \forall m\ge 1,\\[10pt]
        0\qquad &t=1.
    \end{cases}
\end{gathered}
\end{equation}
In the next subsection, we will show that $\mathcal{I}_{\mathcal{B}}$ is well-defined as a map from $\mathcal{B}$ to itself, and is a contraction with respect to the distance induced by the norm $\lVert \cdot\rVert_{\mathcal{B}}$ defined above.  

\subsubsection{Properties of the iteration map $\mathcal{I}_{\mathcal{B}}$}

\begin{prop}\label{prop:properties-iteration-map-field}
    $\mathcal{I}_{\mathcal{B}}(B)\in \mathcal{B}$ for every $B\in \mathcal{B}$. Moreover, $\mathcal{I}_{B}:\mathcal{B}\to \mathcal{B}$ is a contraction with respect to the distance induced by the norm $\lVert \cdot\rVert_{\mathcal{B}}$. In particular, there exists a unique $\bar B \in \mathcal{B}$ such that $\mathcal{I}_{\mathcal{B}}(\bar B)=\bar B$.
\end{prop}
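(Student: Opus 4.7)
The plan is to apply the Banach fixed-point theorem to $\mathcal{I}_\mathcal{B}$ on the Banach space $(\mathcal{B},\lVert\cdot\rVert_\mathcal{B})$. I will verify (i) that $\mathcal{I}_\mathcal{B}$ maps $\mathcal{B}$ into itself and (ii) that it is a strict contraction; the unique fixed point $\bar B$ then follows immediately.

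For (i), the structural properties \eqref{eq:B1}--\eqref{eq:B2} are essentially inherited from the inputs. Divergence-freeness comes from $B^{k+n-1}$ on the ``breaking'' branch and from the building blocks $u^e$ of \Cref{lem-Building-block} on the ``translation'' branch; support inside $\ell_k Q$ uses the containment \eqref{eq: containment cubes L1} together with the comparison $\ell_{k+n-1}\le r^k_{n-1}(\hat t_n^{k,s})/2$. The endpoint vanishing $\mathcal{I}_\mathcal{B}(B)^k_0=\mathcal{I}_\mathcal{B}(B)^k_1=0$ follows from $B^{k+n-1}\in \mathcal{B}$ and from \eqref{eq:endpoint-r}--\eqref{eq: supp r dot}, while continuity in time across interfaces between the two branches holds because both branches vanish at the joining times, thanks to \eqref{eq:B2}, \eqref{eq: supp r dot}, and the smoothness of the cut-offs. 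The substantive content of (i) is therefore the norm bound $\lVert\mathcal{I}_\mathcal{B}(B)\rVert_\mathcal{B}<\infty$.

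To control the breaking branch on $[\hat t_n^{k,s},\hat t_n^{k,f}]$, I will exploit the fact that the $2^{(n-1)d}$ translated copies of $B^{k+n-1}$ have pairwise disjoint supports (since $\ell_{k+n-1}\le \ell_{k+n-2}/4$ and by \eqref{eq:disjoint-supports-same-generation-densities}). A general observation for sums of compactly-supported $C^\omega$ functions with pairwise disjoint supports---proved by case-splitting on the location of the two evaluation points and using that each summand can be compared to its value at the support boundary, where it vanishes by continuity---yields $\lVert\sum_\sigma g_\sigma\rVert_{C^\omega}\le 2\sup_\sigma\lVert g_\sigma\rVert_{C^\omega}$. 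Combining this with the time-rescaling factor $(N_{m+1}^k-N_m^k)/\tau_m^k$, the monotonicity of $W$, and crucially the constraint \eqref{eq:choice-parameters-fixed-point}, gives
\begin{equation*}
W(\ell_k)\,\lVert \mathcal{I}_\mathcal{B}(B)^k_t\rVert_{C^\omega} \le \frac{2(N_{m+1}^k-N_m^k)}{\tau_m^k}\cdot\frac{W(\ell_k)}{W(\ell_{k+N_m^k})}\,\lVert B\rVert_\mathcal{B} \le \frac{1}{2m}\,\lVert B\rVert_\mathcal{B}.
\end{equation*}
This single estimate does double duty: it bounds the breaking branch and, applied to $B_1-B_2$, furnishes the contraction in (ii), because the translation branch is independent of $B$ and cancels. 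The contraction ratio is thus at most $1/2$.

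For the translation branch, which contributes only to (i), I will follow the same strategy as in Step 2 of \Cref{subsec:proof-trajectories}. Using \eqref{eq:disjoint-supports-same-generation-densities} and \eqref{eq: containment cubes L1} together with the recursive formulas \eqref{eq:shape-velocity-r{k}{n}}, I expect to obtain $\lVert\mathcal{I}_\mathcal{B}(B)^k_t\rVert_{L^\infty}\lesssim_d T^k\,\tilde\omega(r_*^k(t))$ and $\lVert\nabla \mathcal{I}_\mathcal{B}(B)^k_t\rVert_{L^\infty}\lesssim_d T^k\,\tilde\omega(r_*^k(t))/r_*^k(t)$, where $r_*^k(t)$ denotes the smallest active scale at time $t$. \Cref{lem:interpolation} then gives $[\mathcal{I}_\mathcal{B}(B)^k_t]_{C^\omega}\lesssim T^k/W(r_*^k(t))\le T^k/W(\ell_k)$, so $W(\ell_k)\,\lVert\mathcal{I}_\mathcal{B}(B)^k_t\rVert_{C^\omega}$ is uniformly bounded in $k$ because $T^k\le 12\tilde\Omega(\ell_k/4)\to 0$ and $\omega(\ell_k)\le\omega(1)$. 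The main obstacle I anticipate is packaging the several regimes within the translation branch (the initial cut-off growth of each generation, the fully parallelized middle phase governed by $\dot r_n^k=\dot r_{n-1}^k/4$, and the final cut-off decay) into a single clean $L^\infty$/Lipschitz bound in terms of $r_*^k(t)$: this is technically more delicate than in Section~3 because the ratio $\ell_{k+1}/\ell_k$ is not constant, so special care is needed during the transient in which the smallest-scale cube expands until it reaches the $1/4$ ratio with the next generation. Once these bounds are established, the Banach fixed-point theorem closes the argument.
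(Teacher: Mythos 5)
Your proposal follows essentially the same route as the paper: the same splitting into the ``breaking'' and ``translation'' branches, the disjoint-support estimate with factor $2$ plus the time-rescaling factor $(N^k_{m+1}-N^k_m)/\tau^k_m$ and the constraint \eqref{eq:choice-parameters-fixed-point} to get the $\frac{1}{2m}\lVert B\rVert_{\mathcal{B}}$ bound on breaking intervals, the $L^\infty$/Lipschitz bounds and \Cref{lem:interpolation} on translation intervals, and the observation that the translation branch is $B$-independent so the breaking estimate applied to $B-\tilde B$ yields the contraction factor $1/2$; this is exactly Steps 1--3 of the paper's proof.

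One point in your write-up is too quick: membership in $\mathcal{B}$ requires $\mathcal{I}_{\mathcal{B}}(B)^k\in C([0,1];C^{\omega}(\mathbb{R}^d))$, and your argument ``continuity across interfaces holds because both branches vanish at the joining times'' only covers the countably many interior joining times, not the accumulation point $t=1$, where \eqref{eq:B2} forces the value $0$. The weakened translation bound you quote, $W(\ell_k)\lVert\mathcal{I}_{\mathcal{B}}(B)^k_t\rVert_{C^{\omega}}\lesssim T^k$, is only a uniform bound and does not by itself give $\lVert\mathcal{I}_{\mathcal{B}}(B)^k_t\rVert_{C^{\omega}}\to 0$ as $t\to 1^-$. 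The fix is already contained in your estimates: on the $m$th batch the breaking branch is controlled by $\frac{1}{2m}\lVert B\rVert_{\mathcal{B}}/W(\ell_k)\to 0$, and on the translation branch one should keep the finer form $[\mathcal{I}_{\mathcal{B}}(B)^k_t]_{C^{\omega}}\lesssim_d T^k/W(r^k_*(t))$ together with $r^k_*(t)\le \ell_k 2^{-N^k_m-1}$, so that $W(r^k_*(t))\ge W(\ell_k 2^{-N^k_m-1})\to\infty$ as $m\to\infty$ (and $\tilde\omega(r^k_*(t))\to 0$ handles the $L^\infty$ part). This is precisely how the paper closes Step 2; with that addition your argument is complete.
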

\begin{proof}
    Let $B=\{B^{k}\}_{k\ge 0}\in \mathcal{B}$, and let us consider $\mathcal{I}_{\mathcal{B}}(B)=\{\mathcal{I}_{\mathcal{B}}(B)^{k}\}_{k\ge 0}$ as defined above. 

    \smallskip
    \noindent \textbf{Step 1: $\bm{\mathcal{I}_{\mathcal{B}}(B)}$ satisfies \eqref{eq:B1} and \eqref{eq:B2}.}
    First note that, by the definition of $\mathcal{I}_{\mathcal{B}}$, and since $B\in \mathcal{B}$,
    \begin{equation*}
        \mathcal{I}_{\mathcal{B}}(B)^{k}_{0}= \sum_{\sigma \in \mathfrak{S}}\frac{1}{\hat{t}^{k,f}_{2}}B^{k+1}(1, \cdot-c^{k}_{1,\sigma}(0))=0,\qquad  \mathcal{I}_{\mathcal{B}}(B)^{k}_{1}=0.
    \end{equation*}
Therefore, condition \eqref{eq:B2} holds for $\mathcal{I}_{\mathcal{B}}(B)$.

    Using the fact that $B\in \mathcal{B}$, along with the properties of the building blocks from \Cref{lem-Building-block}, one can check that for each $t\in [0,1]$, $\mathcal{I}_{\mathcal{B}}(B)^{k}_{t}$ is a finite sum of divergence-free and compactly supported fields in $C^{\omega}(\R^{d})$. Thus, $\mathcal{I}_{\mathcal{B}}(B)^{k}_{t}\in C^{\omega}(\R^{d})$ is compactly supported and divergence-free, for every $t\in [0,1]$. Using $\supp B^{k+n-1}\subseteq \ell_{k+n-1}Q\subseteq \ell_{k}2^{-n+1}Q$, \Cref{lem-Building-block}, the bound $r_{n}^{k}\le \ell_{k}2^{-n}$ and the definition of $c_{n-1,\sigma}^{k}$, one can show that $\supp \mathcal{I}_{\mathcal{B}}(B)^{k}_{t}\subseteq \ell_{k}Q$ for every $t\in [0,1]$. Hence, \eqref{eq:B1} holds for $\mathcal{I}_{\mathcal{B}}(B)$ as well.

    \smallskip
    \noindent \textbf{Step 2: $\bm{\mathcal{I}_{\mathcal{B}}(B)^{k}\in C([0,1];C^{\omega}(\R^{d}))}$.} Let us fix $k\ge 0$. First note that $t\mapsto \mathcal{I}_{\mathcal{B}}(B)^{k}_{t}\in C^{\omega}(\R^{d})$ is continuous in each interval $[\hat{t}_{n}^{k,s},\hat{t}_{n}^{k,f})$, $n\ge 2$, because of $B\in \mathcal{B}$. Moreover, $t\mapsto \mathcal{I}_{\mathcal{B}}(B)^{k}_{t}\in C^{\omega}(\R^{d})$ is continuous in each interval $[t_{N_{m+1}^{k}}^{k,s},t_{N_{m+1}^{k}}^{k,f})$, $m\ge 1$, by the regularity of the functions $r_{n}^{k}, c_{n,\sigma}^{k}$, and of the building blocks $u^{e}$ from \Cref{lem-Building-block}. 
    As $\mathcal{I}_{\mathcal{B}}(B)^{k}_{t}\equiv 0$ for each $t\in \{\hat{t}_{n}^{k,s}\}_{n\ge 2}\cup \{\hat{t}_{n}^{k,f}\}_{n\ge 2}\cup \{t_{N_{m+1}^{k}}^{k,s}\}_{m\ge 1}\cup \{t_{N_{m+1}^{k}}^{k,f}\}_{m\ge 1}$ because of Step 1, \eqref{eq: supp r dot} and the definition \eqref{def: IBB}, we get $\mathcal{I}_{\mathcal{B}}(B)^{k}\in C([0,1);C^{\omega}(\R^{d}))$.
    Thus, to conclude this step, it only remains to prove that $\lVert \mathcal{I}_{\mathcal{B}}(B)^{k}_{t}\rVert_{C^{\omega}(\R^{d})}\to 0$ as $t\to 1^{-}$.
    
    We first consider the case $t\in [\hat{t}^{k,s}_{n},\hat{t}^{k,f}_{n})$ for some $n\in \{N_{m}^{k}+1,\dots, N_{m+1}^{k}\}$. In the time interval $[\hat{t}^{k,s}_{n},\hat{t}^{k,f}_{n})$, $\mathcal{I}_{\mathcal{B}}(B)^{k}$ is the sum of a finite number of copies of $B^{k+n-1}$ with disjoint spatial support. Also, $\hat{t}^{k,f}_{n}-\hat{t}^{k,s}_{n}=\tau_{m}^{k}/(N_{m+1}^{k}-N_{m}^{k})$. Therefore:
    \begin{align*}
        \sup_{t\in [\hat{t}^{k,s}_{n},\hat{t}^{k,f}_{n})}\lVert \mathcal{I}_{\mathcal{B}}(B)^{k}_{t}\rVert_{C^{\omega}(\R^{d})}\le 2 \frac{N_{m+1}^{k}-N_{m}^{k}}{\tau_{m}^{k}}\sup_{t\in [0,1]}\lVert B^{k+n-1}_{t}\rVert_{C^{\omega}(\R^{d})}.
    \end{align*}
    By the definition of $\lVert \cdot\rVert_{\mathcal{B}}$, $W(\ell_{k+n-1})\ge W(\ell_{k+N_{m}^{k}})$, and \eqref{eq:choice-parameters-fixed-point}, we then get
    \begin{equation}\label{eq:bound-velocity-splitting-part}
        \begin{aligned}
        \sup_{t\in [\hat{t}^{k,s}_{n},\hat{t}^{k,f}_{n})}\lVert \mathcal{I}_{\mathcal{B}}(B)^{k}_{t}\rVert_{C^{\omega}(\R^{d})} & \le
        2 \frac{N_{m+1}^{k}-N_{m}^{k}}{\tau_{m}^{k} } \frac{1}{W(\ell_{k+n-1})}\sup_{t\in [0,1]} W(\ell_{k+n-1}) \lVert B^{k+n-1}_{t}\rVert_{C^{\omega}(\R^{d})} \\ 
        & \leq 2 \frac{N_{m+1}^{k}-N_{m}^{k}}{\tau_{m}^{k} } \frac{1}{W(\ell_{k+N_{m}^{k}})} 
        \lVert B\rVert_{\mathcal{B}} \\
        & \leq \frac{1}{2 m W(\ell_k)} 
        \lVert B\rVert_{\mathcal{B}}\qquad \forall n\in \{N_{m}^{k}+1,\dots,N_{m+1}^{k}\},\quad \forall m\ge 1.
    \end{aligned}
    \end{equation}
    
   We now consider the case $t\in [t_{N_{m+1}^{k}}^{k,s},t_{N_{m+1}^{k}}^{k,f})$. Let $\bar n \in \{N_{m}^{k}+1,\dots, N_{m+1}^{k}\}$ be such that $t\in [t_{\bar n}^{k,s},t_{\bar n}^{k,f})\setminus [t_{\bar n-1}^{k,s},t_{\bar n-1}^{k,f})$. From \eqref{eq:properties-cutoff-chin-nonspecial-in}, \eqref{eq:properties-cutoff-chin-nonspecial-fin} and \eqref{eq:shape-velocity-r{k}{n}}, we have 
    \begin{equation*}
    \begin{gathered}
        \dot r_{n}^{k}(t)=0\qquad \forall n\in\{N_{m}^{k}+1,\dots, \bar n -1\},\\[5pt]
        r_{n}^{k}(t)=4^{\bar n-n}r_{n}^{k}(t),\quad \dot r_{n}^{k}(t)= 4^{\bar n-n}\dot r_{\bar n}^{k}(t)\le 2T^{k} 4^{\bar n-n}\tilde{\omega}(r_{\bar n}^{k}(t))\qquad \forall n\in \{\bar n,\dots, N_{m+1}^{k}\}.
    \end{gathered}
    \end{equation*}
    Using \eqref{eq:disjoint-supports-same-generation-densities}, \Cref{lem-Building-block}, and the definition \eqref{def: IBB}, for each $x\in \R^{d}$ and each $n\in \{\bar n,\dots, N_{m+1}^{k}\}$, there is at most one $\sigma\in \mathfrak{S}^{n}$ for which $u^{\sigma_{n}}\left(\frac{x-c_{n,\sigma}^{k}(t)}{r_{n}^{k}(t)}\right)\neq 0$. Therefore, 
    \begin{equation}\label{eq:Linfty-bound-iterated-field-translation}
        \lVert \mathcal{I}_{\mathcal{B}}(B)^{k}_{t}\rVert_{L^{\infty}(\R^{d})}\lesssim_{d}\sum_{n=\bar n}^{N_{m+1}^{k}}\dot r_{n}^{k}(t)=2T^{k}\tilde{\omega}(r_{\bar n}^{k}(t))\sum_{n=\bar n}^{N_{m+1}^{k}}4^{\bar n-n}\lesssim T^{k}\tilde{\omega}(r_{\bar n}^{k}(t))\le \frac{T^{k}}{W(r_{\bar n}^{k}(t))}\omega(r_{\bar n}^{k}(t)).
    \end{equation}
    Similarly, taking into account \eqref{eq:disjoint-supports-same-generation-densities}, \eqref{eq: containment cubes L1}, and \Cref{lem-Building-block}, we see that for each $x\in \R^{d}$, there is at most one choice of $n\in\{\bar n,\dots, N_{m+1}^{k}\}$ and $\sigma\in \mathfrak{S}^{n}$ for which $\nabla u^{\sigma_{n}}\left(\frac{x-c_{n,\sigma}^{k}(t)}{r_{n}^{k}(t)}\right)\neq 0$. Hence, 
    \begin{equation}\label{eq:Lip-bound-iterated-field-translation}
        \lVert \nabla \mathcal{I}_{\mathcal{B}}(B)^{k}_{t}\rVert_{L^{\infty}(\R^{d})}\lesssim_{d} \sup_{\bar n\le n\le N_{m+1}^{k}}\frac{\dot r_{n}^{k}(t)}{r_{n}^{k}(t)}\lesssim T^{k}\frac{\tilde{\omega}(r_{\bar n}^{k}(t))}{r_{\bar n}^{k}(t)}\le \frac{T^{k}}{W(r_{\bar n}^{k}(t))}\frac{\omega(r_{\bar n}^{k}(t))}{r_{\bar n}^{k}(t)}.
    \end{equation}
    By \Cref{lem:interpolation}, the bounds \eqref{eq:Linfty-bound-iterated-field-translation} and \eqref{eq:Lip-bound-iterated-field-translation} together give $[\mathcal{I}_{\mathcal{B}}(B)^{k}_{t}]_{C^{\omega}(\R^{d})}\lesssim_{d} \frac{T^{k}}{W(r_{\bar n}^{k}(t))}$. Since $W(r_{\bar n}^{k}(t))\ge W(\ell_{k}2^{-N_{m}^{k}-1})$, we obtain
    \begin{equation}\label{eq:bound-velocity-translation-part}
        \sup_{t\in [t_{N_{m+1}^{k}}^{k,s},t_{N_{m+1}^{k}}^{k,f})}\lVert \mathcal{I}_{\mathcal{B}}(B)^{k}_{t}\rVert_{C^{\omega}(\R^{d})}\lesssim_{d}\frac{T^{k}}{W(\ell_{k}2^{-N_{m}^{k}-1})}.
    \end{equation}
    Putting together \eqref{eq:bound-velocity-splitting-part} and \eqref{eq:bound-velocity-translation-part}, we finally get
    \begin{equation*}
        \sup_{t\in [\hat{t}_{N_{m}^{k}+1}^{k,s},1]}\lVert \mathcal{I}_{\mathcal{B}}(B)^{k}_{t}\rVert_{C^{\omega}(\R^{d})}\le \max\left\{\frac{1}{2m}\frac{\lVert B\rVert_{\mathcal{B}}}{W(\ell_{k})}, C(d)\frac{T^{k}}{W(\ell_{k}2^{-N_{m}^{k}-1})}\right\}\to 0\qquad \text{as $m\to \infty$},
    \end{equation*}
which concludes this step.

\smallskip
\noindent \textbf{Step 3: $\bm{\mathcal{I}_{\mathcal{B}}:(\mathcal{B,\lVert \cdot \rVert_{\mathcal{B}}})\to (\mathcal{B},\lVert \cdot\rVert_{\mathcal{B}})}$ is a contraction.} First, we need to prove that $\lVert \mathcal{I}_{\mathcal{B}}(B)\rVert_{\mathcal{B}}<\infty$ for all $B\in \mathcal{B}$. By \eqref{eq:bound-velocity-splitting-part}, \eqref{eq:bound-velocity-translation-part}, the monotonicity of $W$ and the bound on $T^{k}$ from \eqref{eq:def-Tk}, we get
\begin{align*}
    \lVert \mathcal{I}_{\mathcal{B}}(B)\rVert_{\mathcal{B}}&=\sup_{k\ge 0}\sup_{t\in [0,1]}W(\ell_{k})\lVert \mathcal{I}_{\mathcal{B}}(B)^{k}_{t}\rVert_{C^{\omega}(\R^{d})}\\
    &= \sup_{k\ge 0}\sup_{m\ge 1}\max \left\{\sup_{t\in \bigcup_{n=N_{m}^{k}+1}^{N_{m+1}^{k}}[\hat{t}^{k,s}_{n},\hat{t}^{k,f}_{n})}W(\ell_{k})\lVert \mathcal{I}_{\mathcal{B}}(B)^{k}_{t}\rVert_{C^{\omega}(\R^{d})},  \sup_{t\in [t_{N_{m+1}^{k}}^{k,s},t_{N_{m+1}^{k}}^{k,f})}W(\ell_{k})\lVert \mathcal{I}_{\mathcal{B}}(B)^{k}_{t}\rVert_{C^{\omega}(\R^{d})}\right\}\\
    &\le \sup_{k\ge 0}\sup_{m\ge 1}\max\left\{\frac{1}{2m}\lVert B\rVert_{\mathcal{B}}, C(d)T^{k}\frac{W(\ell_{k})}{W(\ell_{k}2^{-N_{m}^{k}-1})}\right\}\\
    &\le \max \left\{\frac{1}{2}\lVert B\rVert_{\mathcal{B}}, 12 C(d)\tilde{\Omega}\left(\frac{1}{4}\right)\right\}<\infty.
\end{align*}
Now let $B,\tilde{B}\in \mathcal{B}$. Note that 
$\mathcal{I}_{\mathcal{B}}(B)^{k}_{t}-\mathcal{I}_{\mathcal{B}}(\tilde{B})^{k}_{t}\equiv 0$ for every $t\in [t_{N_{m+1}^{k}}^{k,s}, t_{N_{m+1}^{k}}^{k,f})$ and every $m\ge 1$. On the other hand, 
$\mathcal{I}_{\mathcal{B}}(B)^{k}_{t}-\mathcal{I}_{\mathcal{B}}(\tilde{B})^{k}_{t}=\mathcal{I}_{\mathcal{B}}(B-\tilde{B})^{k}_{t}$ for every $t\in [\hat{t}_{n}^{k,s},\hat{t}_{n}^{k,f})$ and every $n\ge 2$. Therefore, applying \eqref{eq:bound-velocity-splitting-part} to $B-\tilde B \in \mathcal{B}$, we get 
\begin{align*}
    \lVert \mathcal{I}_{B}(B)-\mathcal{I}_{\mathcal{B}}(\tilde B)\rVert_{\mathcal{B}}= \sup_{k\ge 0}\sup_{n\ge 2}\sup_{t\in [\hat{t}^{k,s}_{n},\hat{t}^{k,f}_{n})}W(\ell_{k})\lVert \mathcal{I}_{\mathcal{B}}(B-\tilde B)^{k}_{t}\rVert_{C^{\omega}(\R^{d})}\le \frac{1}{2}\lVert B-\tilde B\rVert_{\mathcal{B}},
\end{align*}
which proves the desired contraction property and concludes the proof.
\end{proof}

\subsection{Density construction} \label{subsec: density construction}
\subsubsection{Setting of the fixed point argument for the density field}
\paragraph{Space of density fields $\mathcal{T}$.}
We fix $s > d/2 + 1$ and we consider the negative Sobolev space $H^{-s}(\R^{d})$, defined as the dual of the function space $H^{s}(\mathbb{R}^d)$, i.e. we have $f \in H^{-s}(\mathbb{R}^d)$ if
\begin{align*}
\|f\|_{H^{-s}(\mathbb{R}^d)} \coloneqq \sup_{\substack{\varphi \in H^s_0(\mathbb{R}^d) \\ \|\varphi\|_{H^s(\mathbb{R}^d)} \leq 1}} |\langle f, \varphi \rangle| < \infty,
\end{align*}
where $\langle \cdot, \cdot \rangle: H^{-s}(\mathbb{R}^d)\times H^{s}(\mathbb{R}^d) \to \mathbb{R}$ denotes the duality pairing. Let $\Lip([0,1]; H^{-s}(\R^{d}))$ be the space of continuous curves $[0,1]\ni t\mapsto \theta_{t}\in H^{-s}(\R^{d})$ for which
\begin{align*}
[\theta]_{\Lip([0,1]; H^{-s}(\R^{d}))} \coloneqq \sup_{\substack{t_1, t_2 \in [0, 1] \\ t_1 \neq t_2}}\frac{1}{|t_1-t_2|} \|\theta(t_1, \cdot) - \theta(t_2, \cdot)\|_{H^{-s}(\mathbb{R}^d)}<\infty.
\end{align*}

We define the following space of sequences of density fields:
\begin{align} \label{def: space tau}
    \mathcal{T}:=  \left\{\Theta=\{\Theta^{k}\}_{k\ge 0}\subset \Lip([0,1]; H^{-s}(\R^{d})): \text{\eqref{eq:T1} and \eqref{eq:T2} hold, and $\lVert \Theta\rVert_{\mathcal{T}}<\infty$} \right\},
\end{align}
where 
\begin{subequations}
    \begin{equation}\tag{T-1}\label{eq:T1}
        \supp \Theta^{k}_{t} \subseteq \ell_{k} Q\qquad \forall t\in [0,1], \quad \forall k\ge 0.
    \end{equation}
    \begin{equation}\tag{T-2}\label{eq:T2}
        \Theta^{k}_{0}= \frac{1}{\mathscr{L}^{d}(A^{k})}\mathbbm{1}_{A^{k}},\quad A^{k}:= \ell_{k}\bigcup_{\sigma\in \mathfrak{S}} \left( \frac{\sigma}{4}+\ell_{k+1}Q \right),\qquad \Theta^{k}_{1}=\frac{1}{\mathscr{L}^{d}(\ell_{k} Q)}\mathbbm{1}_{\ell_{k} Q}\qquad \forall k\ge 0,
    \end{equation}
\end{subequations}
and the semi-norm $[\cdot]_{\mathcal{T}}$ is defined as
\begin{align} \label{def: norm T}
 [\Theta ]_{\mathcal{T}}:= \sup_{k \ge 0} W(\ell_k) [\Theta^k]_{\Lip([0,1]; H^{-s}(\R^{d}))}.
\end{align}
To make sure that $\mathcal{T}$ is nonempty, observe that $\Theta=\{\Theta^{k}\}_{k\ge 0}$ obtained as linear interpolation
\begin{equation*}
     \Theta^{k}_{t}:= (1-t)\frac{1}{\mathscr{L}^{d}(A^{k})}\mathbbm{1}_{A^{k}}+t\frac{1}{\mathscr{L}^{d}(\ell_{k} Q)}\mathbbm{1}_{\ell_{k} Q}\qquad \forall t\in [0,1],\quad \forall k\ge 0
\end{equation*}
belongs to the space $\mathcal{T}$. In fact, for such $\Theta^{k}$ we have
\begin{equation*}
    [\Theta^{k}]_{\Lip([0,1];H^{-s}(\R^{d}))}=\lVert \Theta^{k}_{1}-\Theta^{k}_{0}\rVert_{H^{-s}(\R^{d})}\lesssim_{d,s} \ell_{k}\le \frac{\ell_{k}\omega(\ell_{k})}{\tilde{\omega}(\ell_{k})}\frac{1}{W(\ell_{k})}\le \frac{\omega(1)}{\tilde{\omega}(1)}\frac{1}{W(\ell_{k})},
\end{equation*}
where we used the definition of $W$, the monotonicity of $\omega$, and the concavity of $\tilde{\omega}$.

Finally, we equip $\mathcal{T}$ with the distance
\begin{equation*}
    d_{\mathcal{T}}(\Theta,\tilde\Theta):= [\Theta-\tilde{\Theta}]_{\mathcal{T}}= \sup_{k\ge 0} W(\ell_{k})[\Theta^k-\tilde\Theta^{k}]_{\Lip([0,1]; H^{-s}(\R^{d}))}.
\end{equation*}
One can check that $(\mathcal{T},d_{\mathcal{T}})$ is indeed a complete metric space. 

\paragraph{Iteration map $\mathcal{I}_{\mathcal{T}}$.} For every $\Theta=\{\Theta^{k}\}_{k\ge 0}$, we define a corresponding sequence $\mathcal{I}_{\mathcal{T}}(\Theta):=\{\mathcal{I}_{\mathcal{T}}(\Theta)^{k}\}_{k\ge 0}$ as follows:
    \begin{equation} \label{eq: ITT}
    \begin{gathered}
    [0,1]\ni t\mapsto \mathcal{I}_{\mathcal{T}}(\Theta)^{k}_{t}\in H^{-s}(\R^{d})\qquad \forall k\ge 0,\\[10pt]
        \mathcal{I}_{\mathcal{T}}(\Theta)^{k}_{t}:=\begin{cases}
            \underset{\sigma\in \mathfrak{S}^{n-1}}{\sum}2^{-(n-1)d}\Theta^{k+n-1}\left(\frac{\hat{t}_{n}^{k,f}-t}{\hat{t}_{n}^{k,f}-\hat{t}_{n}^{k,s}},\cdot-c_{n-1,\sigma}^{k}(\hat{t}_{n}^{k,s})\right)\qquad &\forall t\in [\hat{t}^{k,s}_{n},\hat{t}^{k,f}_{n}),\quad \forall n\ge 2,\\[10pt]
            \underset{\sigma \in \mathfrak{S}^{N_{m+1}^{k}}}{\sum}2^{-N_{m+1}^{k}d}\ell_{k+N_{m+1}^{k}}^{-d}\mathbbm{1}_{Q}\left(\frac{x-c_{N_{m+1}^{k},\sigma}^{k}(t)}{\ell_{k+N_{m+1}^{k}} }\right)\qquad & \forall t\in [t_{N_{m+1}^{k}}^{k,s},t_{N_{m+1}^{k}}^{k,f}),\quad \forall m\ge 1,\\[10pt]
            \frac{1}{\mathscr{L}^{d}(\ell_{k} Q)}\mathbbm{1}_{\ell_{k} Q}(x)\qquad & t=1.
        \end{cases}
    \end{gathered}
    \end{equation}
    For each time $t$ belonging to intervals of the type $[\hat{t}^{k,s}_{n},\hat{t}^{k,f}_{n})$, $\mathcal{I}_{\mathcal{T}}(\Theta)^{k}_{t}$ is a finite renormalized sum of elements from $\Theta=\{\Theta^{k}\}_{k\ge 0}$. In intervals of the type $[t^{k,s}_{N_{m+1}^{k}},t^{k,f}_{N_{m+1}^{k}})$ instead, $\mathcal{I}_{\mathcal{T}}(\Theta)^{k}_{t}$ is an explicit non-negative bounded function with unit mass. In the following lemma we prove that $\mathcal{I}_{\mathcal{T}}(\Theta)^{k}$ solves the continuity equation with vector field $\bar{B}^k$ in those intervals, where $\bar B=\{\bar B^{k}\}_{k\ge 0}$ is the fixed point given by \Cref{prop:properties-iteration-map-field}.

\begin{lem} \label{lem: Thetak solve continuity}
For every $\Theta \in \mathcal{T}$, $k\ge 0$, and $m\ge 1$, $\mathcal{I}_{\mathcal{T}}(\Theta)^k$ solves the continuity equation with velocity field $\bar{B}^k$ in the time interval $(t_{N_{m+1}^{k}}^{k,s},t_{N_{m+1}^{k}}^{k,f})$.
\end{lem}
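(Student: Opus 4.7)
The plan is to show that on the interval $(t^{k,s}_{N_{m+1}^k}, t^{k,f}_{N_{m+1}^k})$, the velocity field $\bar B^k_t = \mathcal{I}_{\mathcal{B}}(\bar B)^k_t$ is spatially constant on each of the cubes $C_\sigma(t) := c^k_{N_{m+1}^k, \sigma}(t) + \ell_{k+N_{m+1}^k} Q$, $\sigma \in \mathfrak{S}^{N_{m+1}^k}$, with value equal to $\dot c^k_{N_{m+1}^k, \sigma}(t)$. Since by the second branch of \eqref{eq: ITT} the density $\mathcal{I}_{\mathcal{T}}(\Theta)^k_t$ is a uniform renormalized sum of the indicators $\mathbbm{1}_{C_\sigma(t)}$, each such indicator will then be rigidly translated by $\bar B^k_t$, and the continuity equation will follow by linearity.

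The core of the argument, which mirrors the computation leading to \eqref{eq: b part 1} in Section \ref{subsec:proof-trajectories}, will proceed by establishing the nested containment $C_\sigma(t) \subset c^k_{n, \sigma^n}(t) + (r^k_n(t)/2) Q$ for every $n \leq N_{m+1}^k$, where $\sigma^n \in \mathfrak{S}^n$ denotes the $n$-truncation of $\sigma$. The base case $n = N_{m+1}^k$ will require the inequality $\ell_{k+N_{m+1}^k} \leq r^k_{N_{m+1}^k}(t)/2$, which I would derive from $r^k_{N_{m+1}^k}(t) \geq \ell_{k+N_{m+1}^k - 1}/2$ (a consequence of \eqref{eq:shape-r{k}{n}} and \eqref{eq:endpoint-r} together with the monotonicity of $\nu_k$) combined with the constraint $\ell_{k+1} \leq \ell_k/4$. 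The inductive step is then a direct iteration of \eqref{eq: containment cubes L1}. Combining this containment with the disjointness property \eqref{eq:disjoint-supports-same-generation-densities} and the properties of the building blocks from \Cref{lem-Building-block} will show that, for each $x \in C_\sigma(t)$ and each $n \in \{N_m^k + 1, \ldots, N_{m+1}^k\}$, the only $\tilde\sigma \in \mathfrak{S}^n$ contributing a nonzero term to the second branch of \eqref{def: IBB} is $\tilde\sigma = \sigma^n$, and $x$ lies in the ``flat'' region where that building block equals the constant $\sigma_n$. This yields
\begin{equation*}
\bar B^k_t(x) = \sum_{n = N_m^k + 1}^{N_{m+1}^k} \frac{\dot r^k_n(t)}{2}\, \sigma_n \qquad \forall x \in C_\sigma(t).
\end{equation*}
Since $\dot r^k_n(t) = 0$ for all $n \leq N_m^k$ on this interval, by \eqref{eq: supp r dot} combined with $t^{k,f}_n < t^{k,s}_{N_{m+1}^k}$ for such $n$, differentiating \eqref{def: center of cubes L1} will identify the right-hand side with $\dot c^k_{N_{m+1}^k, \sigma}(t)$.

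With $\bar B^k_t \equiv \dot c^k_{N_{m+1}^k, \sigma}(t)$ on $C_\sigma(t)$, a direct change of variables gives, for every $\varphi \in C^\infty_c(\mathbb{R}^d)$,
\begin{equation*}
\frac{d}{dt}\int_{C_\sigma(t)} \varphi\, dx = \int_{C_\sigma(t)} \nabla\varphi \cdot \bar B^k_t\, dx,
\end{equation*}
which is the weak formulation of the continuity equation for $\mathbbm{1}_{C_\sigma(t)}$. Summing over $\sigma \in \mathfrak{S}^{N_{m+1}^k}$ with the normalization $2^{-N_{m+1}^k d} \ell_{k+N_{m+1}^k}^{-d}$ will complete the argument. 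The main technical point I expect is the bookkeeping of the containment chain and of which generations have vanishing $\dot r^k_n$ on this time interval; once those are carefully tracked, the remaining computation is routine and directly analogous to the one for \eqref{eq:trajectories-centers} in the preliminary construction.
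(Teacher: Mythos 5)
Your overall strategy coincides with the paper's: show that on $(t^{k,s}_{N^k_{m+1}}, t^{k,f}_{N^k_{m+1}})$ the fixed point $\bar B^k_t$ is constant, equal to $\dot c^{k}_{N^k_{m+1},\sigma}(t)$, on each cube $C_\sigma(t)=c^{k}_{N^k_{m+1},\sigma}(t)+\ell_{k+N^k_{m+1}}Q$, and conclude by rigid translation of the renormalized indicators in the second branch of \eqref{eq: ITT}. However, one step as you state it is false: the containment $C_\sigma(t)\subset c^{k}_{n,\sigma^n}(t)+(r^{k}_n(t)/2)Q$ does \emph{not} hold for every $n\le N^k_{m+1}$ throughout the interval. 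The inclusion \eqref{eq: containment cubes L1} is stated, and true, only for $t\le t^{k,f}_{n-1}$, and the translation interval $(t^{k,s}_{N^k_{m+1}},t^{k,f}_{N^k_{m+1}})$ contains in its second half times $t>t^{k,f}_{n}$ at which the generation-$n$ fictitious cubes have already reached their final positions while the smaller cubes keep spreading; at such times the cubes $C_\sigma(t)$ near the boundary of a generation-$n$ dyadic cell lie outside the middle half $c^{k}_{n,\sigma^n}(t)+(r^{k}_n(t)/2)Q$, so your claim that for each $n$ in the batch ``$x$ lies in the flat region where that building block equals the constant $\sigma_n$'' fails for those $n$. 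The chain of containments can only be run down to the minimal index $\bar n\in\{N^k_m+1,\dots,N^k_{m+1}\}$ with $t\in(t^{k,s}_{\bar n},t^{k,f}_{\bar n})$ (equivalently, to those $n$ with $t\le t^{k,f}_{n}$), which is precisely how the paper argues.

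The gap is repairable with ingredients you already use: for every within-batch $n<\bar n$ one has $\dot r^{k}_n(t)=0$ by \eqref{eq: supp r dot} (either generation $n$ has not started, $t\le t^{k,s}_n$, or it has already finished, $t\ge t^{k,f}_n$), so the corresponding terms of the second branch of \eqref{def: IBB} vanish no matter where $x$ sits relative to their supports; the geometric argument (containment chain plus \eqref{eq:disjoint-supports-same-generation-densities} and \Cref{lem-Building-block}) is needed, and valid, only for $n\ge\bar n$. With this case split your formula $\bar B^k_t(x)=\sum_{n=N^k_m+1}^{N^k_{m+1}}\tfrac{\dot r^{k}_n(t)}{2}\sigma_n$ on $C_\sigma(t)$, and hence its identification with $\dot c^{k}_{N^k_{m+1},\sigma}(t)$, is correct, and your concluding change-of-variables computation is the same rigid-translation observation the paper takes for granted. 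Note that you invoked $\dot r^{k}_n=0$ only for $n\le N^k_m$, i.e.\ for previous batches; the within-batch generations that have already completed their translation are exactly the cases your blanket containment claim misses, and they are the reason the paper introduces the index $\bar n$.
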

\begin{proof}
    From the definition \eqref{eq: ITT}, we see that for every $t\in (t_{N_{m+1}^{k}}^{k,s},t_{N_{m+1}^{k}}^{k,f})$, $\mathcal{I}_{\mathcal{T}}(\Theta)^k_{t}$ coincides with the unit mass renormalization of the characteristic function of the union of $2^{N^k_{m+1}d}$ disjoint cubes of side length $\ell_{k + N^k_{m+1}}$, centered at the points $c^k_{N^k_{m+1}, \sigma}(t)$, for $\sigma \in \mathfrak{S}^{N^k_{m+1}}$. Therefore, it suffices to prove the following:
    \begin{equation*}
        \bar B(t,x)=\dot c^k_{N^k_{m+1}, \sigma}(t)\qquad \forall t\in (t_{N_{m+1}^{k}}^{k,s},t_{N_{m+1}^{k}}^{k,f}),\quad \forall x\in c^k_{N^k_{m+1}, \sigma}(t)+\ell_{k + N^k_{m+1}}Q,\quad \forall \sigma \in  \mathfrak{S}^{N^k_{m+1}}.
    \end{equation*}
    Let $t\in (t_{N_{m+1}^{k}}^{k,s},t_{N_{m+1}^{k}}^{k,f})$ and $\sigma \in  \mathfrak{S}^{N^k_{m+1}}$ be given. For every $n\le N^k_{m+1}$, we call $\sigma^{n}\in \mathfrak{S}^{n}$ the unique symbol such that $\sigma^{n}\subset \sigma$. Also, Let $\bar n\in \{N_{m}^{k}+1,\dots, N_{m+1}^{k}\}$ be the minimum integer for which $t\in (t_{\bar n}^{k,s},t_{\bar n}^{k,f})$. Because of \eqref{eq: supp r dot}, we will have:
    \begin{equation}\label{eq:consideration-1-proof-explicit-solution}
        \dot r_{n}^{k}(t)=0\qquad \forall n\in \{N_{m}^{k}+1,\dots, \bar n-1\}.
    \end{equation}
    Moreover, by \eqref{eq: containment cubes L1} and $\ell_{k + N^k_{m+1}}\le r^k_{N^k_{m+1}}(t)/2$, the following inclusions hold:
    \begin{align}\label{eq:consideration-2-proof-explicit-solution}
c^k_{N^k_{m+1}, \sigma}(t) + \ell_{k + N^k_{m+1}}Q\subset c^k_{N^k_{m+1}, \sigma}(t) + \frac{r^k_{N^k_{m+1}}(t)}{2} Q \subset c^k_{n, \sigma^n}(t) + \frac{r^k_n(t)}{2} Q\qquad \forall n\in \{\bar n,\dots, N_{m+1}^{k}\}.
\end{align}
From \eqref{eq:disjoint-supports-same-generation-densities} and \eqref{eq:consideration-2-proof-explicit-solution} we also deduce that 
\begin{align}\label{eq:consideration-3-proof-explicit-solution}
c^k_{N^k_{m+1}, \sigma}(t) + \ell_{k + N^k_{m+1}}Q\subset \R^{d}\setminus \left(c^k_{n, \tilde\sigma}(t) + r^k_n(t)Q\right)\qquad \forall n\in \{\bar n,\dots, N_{m+1}^{k}\},\quad \forall \tilde\sigma\in \mathfrak{S}^{n},\,\tilde\sigma\neq \sigma^{n}.
\end{align}
Since $\bar B=\mathcal{I}_{\mathcal{B}}(\bar B)$, at time $t$, $\bar B^{k}_{t}$ is given by the explicit expression from \eqref{def: IBB}. Therefore, by the properties of the building blocks from \Cref{lem-Building-block}, combining \eqref{eq:consideration-1-proof-explicit-solution},\eqref{eq:consideration-2-proof-explicit-solution}, and \eqref{eq:consideration-3-proof-explicit-solution} we get
\begin{equation*}
    \bar B^{k}(t,x)= \sum_{n=\bar n}^{N_{m+1}^{k}}\frac{\dot r_{n}^{k}(t)}{2}u^{\sigma^{n}_{n}}\left(\frac{x-c_{n,\sigma^{n}}^{k}(t)}{r_{n}^{k}(t)}\right)=\sum_{n=\bar n}^{N_{m+1}^{k}}\frac{\dot r_{n}^{k}(t)}{2}\sigma_{n}=\dot c^k_{N^k_{m+1}, \sigma}(t)\qquad \forall x\in c^k_{N^k_{m+1}, \sigma}(t)+\ell_{k + N^k_{m+1}}Q,
\end{equation*}
 where the last identity follows from \eqref{def: center of cubes L1}. This concludes the proof.
\end{proof}

In the next subsection we will prove that $\mathcal{I}_{\mathcal{T}}$ is well-defined as a map from $\mathcal{T}$ to itself, and is a contraction with respect to the distance $d_{\mathcal{T}}$ defined above.

\subsubsection{Properties of the iteration map $\mathcal{I}_{\mathcal{T}}$.}
\begin{prop}\label{prop:properties-iteration-map-measure}
    $\mathcal{I}_{\mathcal{T}}(\Theta)\in \mathcal{T}$, for every $\Theta \in \mathcal{T}$. Moreover, the map $\mathcal{I}_{\mathcal{T}}:\mathcal{T}\to \mathcal{T}$ is a contraction with respect to the distance $d_{\mathcal{T}}$. In particular, there exists a unique $\bar \Theta\in \mathcal{T}$ such that $\mathcal{I}_{\mathcal{T}}(\bar \Theta)=\bar \Theta$.
\end{prop}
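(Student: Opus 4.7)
The plan is to follow the three-step structure of the proof of Proposition 4.2.

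\textbf{Step 1.} I would first verify that $\mathcal{I}_{\mathcal{T}}(\Theta)$ satisfies conditions \eqref{eq:T1} and \eqref{eq:T2}. The support claim \eqref{eq:T1} on each subinterval reduces to $\supp \Theta^{k+n-1}_\cdot \subseteq \ell_{k+n-1}Q$ together with the elementary bound $|c^k_{n-1,\sigma}(\hat{t}^{k,s}_n)| + \ell_{k+n-1}/2 \leq \ell_k/2$; on translation intervals and at $t=1$ the containment is immediate from the explicit formula. The initial condition is checked by evaluating the splitting-interval branch at $t=0$, using $\Theta^{k+1}_1 = (\mathscr{L}^d(\ell_{k+1}Q))^{-1}\mathbbm{1}_{\ell_{k+1}Q}$ from \eqref{eq:T2} and the positions $c^k_{1,\sigma}(0) = \ell_k \sigma/4$; the final condition holds by definition.

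\textbf{Step 2.} I would bound the weighted Lipschitz seminorm on each type of subinterval. On a splitting interval $[\hat t^{k,s}_n,\hat t^{k,f}_n)$ with $n\in\{N^k_m+1,\dots,N^k_{m+1}\}$, the translation invariance of $H^{-s}$, the time reparametrization factor $\tau^k_m/(N^k_{m+1}-N^k_m)$, and dualizing the sum over $\sigma\in\mathfrak{S}^{n-1}$ against a single $H^s$ test function yield
\begin{equation*}
[\mathcal{I}_{\mathcal{T}}(\Theta)^k]_{\Lip([\hat t^{k,s}_n,\hat t^{k,f}_n);H^{-s}(\R^d))} \leq \frac{N^k_{m+1}-N^k_m}{\tau^k_m}[\Theta^{k+n-1}]_{\Lip([0,1];H^{-s}(\R^d))}.
\end{equation*}
Multiplying by $W(\ell_k)$ and invoking the key constraint \eqref{eq:choice-parameters-fixed-point} together with the monotonicity $W(\ell_{k+n-1})\geq W(\ell_{k+N^k_m})$ delivers a weighted bound of $\tfrac{1}{4m}[\Theta]_{\mathcal{T}}$. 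On a translation interval $[t^{k,s}_{N^k_{m+1}},t^{k,f}_{N^k_{m+1}})$, Lemma 4.4 gives $\partial_t\mathcal{I}_{\mathcal{T}}(\Theta)^k = -\mathrm{div}(\mathcal{I}_{\mathcal{T}}(\Theta)^k \bar B^k)$, so for $\varphi\in H^s$ with $\|\varphi\|_{H^s}\leq 1$ one pairs to get
\begin{equation*}
\langle \partial_t\mathcal{I}_{\mathcal{T}}(\Theta)^k_t, \varphi\rangle = \int \mathcal{I}_{\mathcal{T}}(\Theta)^k_t\, \bar B^k_t\cdot\nabla\varphi\,dx \lesssim \|\mathcal{I}_{\mathcal{T}}(\Theta)^k_t\|_{L^1}\|\bar B^k_t\|_{L^\infty}\|\nabla\varphi\|_{L^\infty}.
\end{equation*}
The first factor equals $1$ (a convex combination of normalized indicators of disjoint cubes), while for $s > d/2+1$ the embedding $H^{s-1}\hookrightarrow L^\infty$ controls the last factor, giving an overall contribution of order $\|\bar B\|_{\mathcal{B}}/W(\ell_k)$. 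Continuity at the transition points $\hat t^{k,f}_{N^k_{m+1}} = t^{k,s}_{N^k_{m+1}}$ and $t^{k,f}_{N^k_{m+1}} = \hat t^{k,s}_{N^k_{m+1}+1}$ holds by construction, since both one-sided limits describe the same uniform distribution over $2^{N^k_{m+1}d}$ disjoint cubes with matching centers.

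\textbf{Step 3.} The contraction rests on the observation that the translation-interval branch of \eqref{eq: ITT} is independent of $\Theta$, so $(\mathcal{I}_{\mathcal{T}}(\Theta)-\mathcal{I}_{\mathcal{T}}(\tilde\Theta))^k$ vanishes on every translation interval and hence at every transition endpoint. Re-running the splitting-interval estimate of Step 2 with $\Theta-\tilde\Theta$ in place of $\Theta$, and handling pairs $(t_1,t_2)$ lying in distinct splitting intervals by inserting the vanishing boundary values and noting that the two one-sided excursions have total length $\leq |t_1-t_2|$, produces $d_{\mathcal{T}}(\mathcal{I}_{\mathcal{T}}(\Theta),\mathcal{I}_{\mathcal{T}}(\tilde\Theta)) \leq \tfrac{1}{4} d_{\mathcal{T}}(\Theta,\tilde\Theta)$. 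The Banach fixed point theorem then yields a unique $\bar\Theta\in\mathcal{T}$ with $\mathcal{I}_{\mathcal{T}}(\bar\Theta)=\bar\Theta$. The principal technical obstacle is the translation-interval $H^{-s}$ estimate: one must invoke Lemma 4.4 to recast the time derivative as a transport flux and then balance the mere $C^\omega$ regularity of $\bar B$ against the unit $L^1$ mass of $\mathcal{I}_{\mathcal{T}}(\Theta)^k$ through the embedding $H^{s-1}\hookrightarrow L^\infty$, which is exactly where the choice $s > d/2+1$ enters.
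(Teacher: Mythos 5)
Your proposal follows the paper's proof in all essentials: the same three-step structure, the same verification of \eqref{eq:T1}--\eqref{eq:T2}, the same splitting-interval estimate via translation invariance of $H^{-s}$, the reparametrization factor $\tau^k_m/(N^k_{m+1}-N^k_m)$ and the constraint \eqref{eq:choice-parameters-fixed-point}, the same use of \Cref{lem: Thetak solve continuity} to convert the translation-interval time derivative into a flux pairing bounded through $\lVert \bar B^k_t\rVert_{L^\infty}$, the unit mass of the density and the Sobolev embedding for $s>d/2+1$, and the same contraction mechanism (the translation branch of \eqref{eq: ITT} is $\Theta$-independent, so the difference only lives on splitting intervals, giving the factor $1/4$). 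The minor difference that you bound $\lVert \bar B^k_t\rVert_{L^\infty}$ by $\lVert \bar B\rVert_{\mathcal{B}}/W(\ell_k)$ while the paper uses the explicit estimate \eqref{eq:Linfty-bound-iterated-field-translation} is immaterial: both give a weighted bound uniform in $k$, which is all that is needed for $[\mathcal{I}_{\mathcal{T}}(\Theta)]_{\mathcal{T}}<\infty$.

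There is, however, one genuine gap: you never address the endpoint $t=1$, where infinitely many breaking-and-translation cycles accumulate. Your continuity claim ``at the transition points $\hat t^{k,f}_{N^k_{m+1}}=t^{k,s}_{N^k_{m+1}}$, etc.'' only covers the interior transitions, each of which is a matching of two explicit formulas; at $t=1$ the value is prescribed separately as $\mathscr{L}^{d}(\ell_kQ)^{-1}\mathbbm{1}_{\ell_kQ}$, and membership in $\Lip([0,1];H^{-s}(\R^d))$ (hence in $\mathcal{T}$) requires showing that the left limit of $\mathcal{I}_{\mathcal{T}}(\Theta)^k_t$ as $t\to 1^-$ exists in $H^{-s}$ and equals exactly this uniform density; otherwise the map could fail to land in $\mathcal{T}$ and the fixed-point argument would not close. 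The paper handles this explicitly: the uniform Lipschitz bound on $[0,1)$ reduces the question to the sequence of times $t=t^{k,f}_{N^k_{m+1}}$, at which $\mathcal{I}_{\mathcal{T}}(\Theta)^k_t$ is the normalized indicator of $2^{N^k_{m+1}d}$ cubes of side $\ell_{k+N^k_{m+1}}$ uniformly distributed on a dyadic grid inside $\ell_kQ$, and this converges weakly (hence in $H^{-s}$, all measures being probabilities supported in the fixed compact $\ell_kQ$ and $s>d/2$) to $\mathscr{L}^{d}(\ell_kQ)^{-1}\mathbbm{1}_{\ell_kQ}$ as $m\to\infty$. Adding this verification would complete your argument; everything else is in place.
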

\begin{proof}
    Let $\Theta=\{\Theta^{k}\}_{k\ge 0}\in \mathcal{T}$, and let us consider $\mathcal{I}_{\mathcal{T}}(\Theta)=\{\mathcal{I}_{\mathcal{T}}(\Theta)^{k}\}_{k\ge 0}$ as above.

    \smallskip
    \noindent \textbf{Step 1: $\bm{\mathcal{I}_{\mathcal{T}}(\Theta)}$ satisfies \eqref{eq:T1} and \eqref{eq:T2}.} Since $t_{2}^{k,s}=0$, $c_{1,\sigma}^{k}(0)= \ell_{k}\sigma/4$ for every $\sigma \in \mathfrak{S}$, and $\Theta^{k+1}(1,\cdot)= \mathscr{L}^{d}(\ell_{k+1}Q)^{-1}\mathbbm{1}_{\ell_{k+1}Q}(x)$, at time $t=0$, we have
\begin{equation*}
    \mathcal{I}_{\mathcal{T}}(\Theta)^{k}_{0}= \sum_{\sigma \in \mathfrak{S}}2^{-d}\mathscr{L}^{d}(\ell_{k+1}Q)^{-1}\mathbbm{1}_{\frac{\ell_{k}}{4}\sigma+\ell_{k+1}Q}.
\end{equation*}
Since at time $t=1$, $\mathcal{I}_{\mathcal{T}}(\Theta)^{k}(1,\cdot):= \mathscr{L}^{d}(\ell_{k} Q)^{-1}\mathbbm{1}_{\ell_{k} Q}$ by definition, we conclude that condition \eqref{eq:T2} holds for $\mathcal{I}_{\mathcal{T}}(\Theta)$. To prove that $\mathcal{I}_{\mathcal{T}}(\Theta)$ satisfies \eqref{eq:T1}, one can proceed exactly as in Step 1 of the proof of \Cref{prop:properties-iteration-map-field}.

\smallskip
\noindent \textbf{Step 2: $\bm{\mathcal{I}_{\mathcal{T}}(\Theta)^k \in \Lip([0, 1]; H^{-s}(\mathbb{R}^d))}$.}
First of all, because of \Cref{lem: Thetak solve continuity}, for every $\varphi \in H^{s}(\R^{d})$ the following estimate holds for any given $t\in (t^{k, s}_{N^k_{m+1}}, t^{k, f}_{N^k_{m+1}})$, $m\ge 1$:
\begin{align*}
    \left|\frac{d}{dt}\int_{\R^{d}}\varphi \,  \mathcal{I}_{\mathcal{T}}(\Theta)^{k}_{t}\, {\rm d}x\right|&=\left|\int_{\mathbb{R}^d}  \bar{B}^k_{t} \cdot \nabla \varphi\,\mathcal{I}_{\mathcal{T}}(\Theta)^{k}_{t} \, {\rm d}x \right| \\
    &\le \lVert \bar{B}^k_{t}\rVert_{L^{\infty}(\R^{d})}\lVert \nabla \varphi\rVert_{L^{\infty}(\R^{d})}\\
    &\le  T^k \tilde{\omega} (\ell_k 2^{-N^k_m - 1}) \| \varphi \|_{H^{s}(\R^{d})}\le 12 \tilde{\Omega} \left(\frac{\ell_k}{4}\right) \tilde{\omega} \left(\frac{\ell_k}{4}\right) \| \varphi \|_{H^{s}(\R^{d})}.
\end{align*}
In the third inequality, we used \eqref{eq:Linfty-bound-iterated-field-translation} and the Sobolev embedding $H^{s}(\R^{d})\hookrightarrow C^{1}(\R^{d})$, due to $s>d/2+1$. The last inequality, instead, was derived using \eqref{eq:def-Tk} and the monotonicity of $\tilde{\omega}$. 
In particular, we have the following:
\begin{equation}\label{eq:Lip-Hs-explicit}
    [ \mathcal{I}_{\mathcal{T}}(\Theta)^k ]_{\Lip\Big((t^{k, s}_{N^k_{m+1}}, t^{k, f}_{N^k_{m+1}}); H^{-s}(\mathbb{R}^d)\Big)}\le 12 \tilde{\Omega} \left(\frac{\ell_k}{4}\right) \tilde{\omega} \left(\frac{\ell_k}{4}\right)\qquad \forall m\ge 1.
\end{equation}

Next, we note that on each interval $(\hat{t}^{k, s}_n, \hat{t}^{k, f}_n)$, $n\ge 2$, we have the following estimate:
\begin{equation}\label{eq: Lip Hs implicit}
    \begin{aligned} 
[ \mathcal{I}_{\mathcal{T}}(\Theta)^k ]_{\Lip\big((\hat{t}^{k, s}_n, \hat{t}^{k, f}_n); H^{-s}(\mathbb{R}^d)\big)} & \leq \frac{1}{\hat{t}^{k, f}_n - \hat{t}^{k, s}_n} [ \Theta^{k+n-1} ]_{\Lip([0, 1]; H^{-s}(\mathbb{R}^d))} \\
 & \leq \frac{N^k_{m+1} - N^k_m}{\tau^k_m} \frac{1}{W(\ell_{k+n-1})} [ \Theta ]_{\mathcal{T}}\leq \frac{1}{4} \frac{1}{W(\ell_k)} [ \Theta ]_{\mathcal{T}},
\end{aligned}\qquad \forall n\ge 2.
\end{equation}
The first inequality follows from a rescaling and change of variables. The second inequality is derived from the definition of $\hat{t}^{k,s}_n$, $\hat{t}^{k,f}_n$ and \eqref{def: norm T}. Finally, in the last inequality, we used \Cref{lem:choice-parameters}. By the design of \eqref{eq: ITT}, for a sequence $\{\Theta^{k}\}_{k\ge 0}$ that belongs to $\mathcal{T}$, in particular, satisfying \eqref{eq:T2}, the density $\mathcal{I}_{\mathcal{T}}(\Theta)^k$ is continuous at times $\hat{t}^{k,s}_n, \hat{t}^{k,f}_n, t^{k,s}_{N^k_{m+1}}, t^{k,f}_{N^k_{m+1}}$ for $n \geq 2$ and $m \geq 1$. Combining this with estimates \eqref{eq:Lip-Hs-explicit} and \eqref{eq: Lip Hs implicit}, we obtain
\begin{align} \label{eq: Lip Hs [1,0)}
[ \mathcal{I}_{\mathcal{T}}(\Theta)^k ]_{\Lip([0, 1); H^{-s}(\mathbb{R}^d))} \leq \max \left\{ 12 \tilde{\Omega} \left(\frac{\ell_k}{4}\right) \tilde{\omega} \left(\frac{\ell_k}{4}\right), \frac{1}{4} \frac{1}{W(\ell_k)} [\Theta]_{\mathcal{T}}  \right\}.
\end{align}
Basically, we have a uniform Lipschitz estimate for any $t < 1$. In order to conclude this step, obtaining the same Lipschitz bound in the closed interval $[0,1]$, it suffices to prove that $\mathcal{I}_{\mathcal{T}}(\Theta)^k_{t}$ converges to $\mathbbm{1}_{\ell_{k} Q}/{\mathscr{L}^{d}(\ell_{k} Q)}$ in $H^{-s}(\R^{d})$ as $t \to 1^{-}$. By \eqref{eq: Lip Hs [1,0)}, we may reduce to prove the convergence on the specific time sequence $t = t^{k, f}_{N^k_{m+1}}$, where $\mathcal{I}_{\mathcal{T}}(\Theta)^k_t$ is composed of $2^{N^k_{m+1}d}$ cubes, each of side length $\ell_{k + N^k_{m+1}}$, uniformly distributed in a dyadic grid inside the cube $\ell_k Q$. Inside these cubes the magnitude of $\mathcal{I}_{\mathcal{T}}(\Theta)^k$ is uniform with value $2^{-N^k_{m+1}d} \, \ell_{k + N^k_{m+1}}^{-d}$. Therefore, $\mathcal{I}_{\mathcal{T}}(\Theta)^k_{t}$ with $t = t^{k, f}_{N^k_{m+1}}$ weakly converges to $\mathbbm{1}_{\ell_{k} Q}/{\mathscr{L}^{d}(\ell_{k} Q)}$ as $m \to \infty$.

\smallskip
\noindent {\textbf{Step 3: $\bm{\mathcal{I}_{\mathcal{T}}:(\mathcal{T},d_{\mathcal{T}}) \to (\mathcal{T},d_{\mathcal{T}})}$ is a contraction.}} First of all we need to show that $[\mathcal{I}_{\mathcal{T}}(\Theta)]_{\mathcal{T}}<\infty$ for all $\Theta \in \mathcal{T}$. In fact, using the Lipschitz bound obtained in Step 1, and the definition of $W$, we get
\begin{align*}
    [ \mathcal{I}_{\mathcal{T}}(\Theta) ]_{\mathcal{T}}&= \sup_{k\ge 0}W(\ell_{k})[ \mathcal{I}_{\mathcal{T}}(\Theta)^k ]_{\Lip([0, 1]; H^{-s}(\mathbb{R}^d))}\\
    &\le \max \left\{ \sup_{k \geq 0} 12 \tilde{\Omega} \left(\frac{\ell_k}{4}\right) \omega \left(\frac{\ell_k}{4}\right), \frac{1}{4} [\Theta ]_{\mathcal{T}}  \right\}<\infty.
\end{align*}
Let now $\Theta, \tilde{\Theta}\in \mathcal{T}$ be given. Note that $\mathcal{I}_{\mathcal{T}}(\Theta)^{k}$ coincides with $\mathcal{I}_{\mathcal{T}}(\tilde\Theta)^{k}$ in all time intervals $(t^{k, s}_{N^k_{m+1}}, t^{k, f}_{N^k_{m+1}})$, $m\ge 1$. On the other hand, on each interval $(\hat{t}^{k, s}_n, \hat{t}^{k, f}_n)$, $n\ge 2$, arguing exactly as in \eqref{eq: Lip Hs implicit} we get
\begin{equation*}
    [\mathcal{I}_{\mathcal{T}}(\Theta)^{k}-\mathcal{I}_{\mathcal{T}}(\tilde\Theta)^{k}]_{\Lip\left((\hat{t}^{k, s}_n, \hat{t}^{k, f}_n); H^{-s}(\mathbb{R}^d)\right)}\le \frac{1}{4 W(\ell_{k})}d_{\mathcal{T}}(\Theta,\tilde\Theta)\qquad \forall n\ge 2. 
\end{equation*}
Therefore, we deduce
\begin{align*}
    d_{\mathcal{T}}\left(\mathcal{I}_{\mathcal{T}}(\Theta),\mathcal{I}_{\mathcal{T}}(\tilde\Theta)\right)&=\sup_{k\ge 0} W(\ell_{k})[\mathcal{I}_{\mathcal{T}}(\Theta)^{k}-\mathcal{I}_{\mathcal{T}}(\tilde\Theta)^{k}]_{\Lip([0, 1]; H^{-s}(\mathbb{R}^d))}\le \frac{1}{4}d_{\mathcal{T}}(\Theta,\tilde\Theta),
\end{align*}
 which is the desired contraction property, and concludes the proof of the proposition.
\end{proof}

\subsection{Proof of \Cref{prop:non-uniqeness-L1}}\label{subsec:proof-nonuniqueness-L1}
Let $\bar{B}$ and $\bar{\Theta}$ be the fixed points given by \Cref{prop:properties-iteration-map-field} and \Cref{prop:properties-iteration-map-measure}, respectively. We denote by $\mathscr{P}(\R^{d})$ the space of probability measures in $\R^{d}$.

\smallskip 
\noindent \textbf{Step 1: $\bm{\bar \Theta^{k}\in L^{\infty}([0,1];L^{1}(\R^{d}))\cap C_{w^{*}}([0,1];\mathscr{P}(\R^{d}))}$ solves \eqref{eq:PDE} with velocity field $\bm{\bar{B}^{k}}$.}
For every $k\ge 0$, let $\mathcal{E}_{k}\subset (0,1)$ be the largest open set such that the following hold for every $t\in \mathcal{E}_{t}$:
\begin{gather}
    \bar\Theta^{k}_{t}\in L^{1}(\R^{d}),\qquad \lVert \bar\Theta^{k}_{t}\rVert_{L^{1}(\R^{d})}=1, \qquad \bar\Theta^{k}_{t}\ge 0,\\[5pt]
    \frac{d}{dt} \int_{\mathbb{R}^d}  \varphi \, \bar{\Theta}^k_t \, {\rm d} x= \int_{\mathbb{R}^d}  \bar{B}^k_t \cdot \nabla \varphi \,\bar{\Theta}^k_t\, {\rm d} x \qquad \forall \varphi \in C^{\infty}_{c}(\R^{d}).\label{eq: Thetak Bk continuity}
\end{gather}
As $\bar \Theta, \bar B$ are fixed points of the iteration maps $\mathcal{I}_{\mathcal{T}}, \mathcal{I}_{\mathcal{B}}$ from \eqref{eq: ITT} and \eqref{def: IBB}, respectively, taking \Cref{lem: Thetak solve continuity} into account we get
\begin{equation}\label{eq:recursion-L1-times}
    \mathscr{L}^1(\mathcal{E}_k) \geq \frac{1}{2} + \sum_{n=2}^{\infty} (\hat{t}^{k,f}_{n}- \hat{t}^{k,s}_{n}) \mathscr{L}^1(\mathcal{E}_{k+n-1}),\qquad \sum_{n=2}^{\infty} (\hat{t}^{k,f}_{n}- \hat{t}^{k,s}_{n})=\frac{1}{2}\qquad \forall k\ge 0.
\end{equation}
This is because, on the one hand, in all time intervals $(t_{N_{m+1}^{k}}^{k,s},t_{N_{m+1}^{k}}^{k,f})$, $m\ge 1$, whose lengths add up to $1/2$, $\bar\Theta^{k}=\mathcal{I}_{\mathcal{T}}(\bar\Theta)^{k}$ is explicitly given by a non-negative $L^{1}$ function with unit mass, and \eqref{eq: Thetak Bk continuity} holds in these intervals by \Cref{lem: Thetak solve continuity}. On the other hand, in the intervals $(\hat{t}^{k,s}_{n},\hat{t}^{k,f}_{n})$, $n\ge 2$, $\bar\Theta^{k}=\mathcal{I}_{\mathcal{T}}(\bar\Theta)^{k}$ and $\bar B^{k}=\mathcal{I}_{\mathcal{B}}(\bar B)^{k}$ are given by a finite sum of renormalized copies of $\bar\Theta^{k+n-1}$ and $\bar B^{k+n-1}$, reparametrized in time in an affine way. 

Let $\eps:=\inf_{k\ge 0}\mathscr{L}^{1}(\mathcal{E}_{k})\in [0,1]$. From \eqref{eq:recursion-L1-times} we deduce $\eps \ge 1/2+ \eps/2$, that is $\eps=1$. In particular, $\mathcal{E}_{k}$ is dense in $[0,1]$ for all $k\ge 0$. As a consequence, by the $H^{-s}(\R^{d})$ continuity in time of $\bar\Theta^{k}$, and the density of $H^{s}(\R^{d})$ in $C_{0}(\R^{d})$, we deduce that 
  $\bar{\Theta}^k_t \in \mathscr{P}(\mathbb{R}^d)$ for all $t \in [0, 1]$, and $\bar{\Theta}^k$ is continuous in time with respect to the weak-$*$ convergence of measures. Moreover, $\bar{\Theta}^k \in \Lip([0, 1]; H^{-s}(\mathbb{R}^d))$ combined with the fact that \eqref{eq: Thetak Bk continuity} holds almost everywhere in $(0,1)$ implies that $\bar{\Theta}^k$ solves the continuity equation with velocity field $\bar B^{k}$, concluding this step.

\smallskip
\noindent \textbf{Step 2: Construction of $\bm{\mu}$ and $\bm{b}$.}
We construct $\mu\in C_{w^{*}}([0,1];\mathscr{P}(\R^{d}))$ and $v\in C([0,1];C^{\omega}(\R^{d}))$ from $\bar\Theta^{0}, \bar B^{0}$, respectively, inverting the direction of time:
\begin{equation*}
    \mu_{t}:=\bar\Theta^{0}_{1-t},\qquad v_{t}:= -\bar B^{0}_{1-t}\qquad \forall t\in [0,1].
\end{equation*}
Since $\bar B \in \mathcal{B}$, we will have 
\begin{equation*}
    \supp v_{t}\subseteq Q,\qquad \diver v_{t}=0\qquad \forall t\in [0,1].
\end{equation*}
Similarly, since $\bar\Theta \in\mathcal{T}$, we will have
\begin{equation*}
    \supp \mu_{t}\subseteq Q\qquad \forall t\in [0,1], \qquad \mu_{0}=\mathbbm{1}_{Q}\mathscr{L}^{d},\qquad \mu_{1}= \mathscr{L}^{d}(A)^{-1}\mathbbm{1}_{A}\mathscr{L}^{d}, \quad A= \bigcup_{\sigma\in \mathfrak{S}}\frac{\sigma}{4}+\ell_{1}Q.
\end{equation*}
Finally, by Step 1 $\mu$ solves \eqref{eq:PDE} with velocity field $v$ in the time interval $[0,1]$, and $\mu_{t}\ll \mathscr{L}^{d}$ for almost every $t\in [0,1]$. This concludes the proof for $T=1$. At this point, appropriately reparametrizing in time, we can indeed conclude the construction on any time interval $[0,T]$.

\bigskip
\paragraph{Acknowledgements.}
    R.C. is supported by the Swiss National Science Foundation (SNF grant
PZ00P2\_208930), by the Swiss State Secretariat for Education, Research and Innovation (SERI) under
contract number MB22.00034.

\bibliographystyle{alpha}
\bibliography{Nonuniqueness.bib}
\end{document}